\numberwithin{equation}{section}
\theoremstyle{plain}
  \newtheorem{teor}{Theorem}[section]
  \newtheorem{prop}[teor]{Proposition}
  \newtheorem{lema}[teor]{Lemma}
  \newtheorem{coro}[teor]{Corollary}
\theoremstyle{remark}  
  \newtheorem{rem}[teor]{Remark}
\theoremstyle{plain}
\newtheoremstyle{hyp}{}{}{\itshape}{}{}{}{0pt}{}
\theoremstyle{hyp}
\DeclareMathAlphabet{\mathpzc}{OT1}{pzc}{m}{it}
\DeclareMathOperator*{\limess}{lim\, inf\, ess}
\DeclareMathOperator{\deri}{D}
\DeclareMathOperator{\Lp}{L}
\DeclareMathOperator{\comp}{c} 
\DeclareMathOperator{\loc}{loc}
\DeclareMathOperator{\hol}{C}
\DeclareMathOperator{\tr}{tr}
\DeclareMathOperator{\sob}{W}
\DeclareMathOperator{\I}{i}
\DeclareMathOperator{\expo}{e}
\DeclareMathOperator{\sop}{supp}
\DeclareMathOperator{\inted}{\mathcal{I}}
\newcommand{\E}{\mathbbm{E}}
\newcommand{\set}{\mathcal{O}}
\newcommand{\dif}{\mathcal{L}}
\newcommand{\inte}{\mathcal{I}}
\newcommand{\ind}{\mathcal{D}}
\newcommand{\R}{\mathbbm{R}}
\newcommand{\uno}{\mathbbm{1}}
\newcommand{\der}{\mathrm{d}}
\newcommand{\F}{\mathbbm{F}}
\newcommand{\Pro}{\mathbbm{P}}
\newcommand{\BE}{\begin{equation}}
\newcommand{\EE}{\end{equation}}
\newcommand {\BA}{\begin{align}}
\newcommand{\EA}{\end{align}}
\newcommand{\eqdef}{\raisebox{0.4pt}{\ensuremath{:}}\hspace*{-1mm}=}
\newcommand{\defeq}{=\hspace*{-1mm}\raisebox{0.4pt}{\ensuremath{:}}}
\title{Solution to HJB equations with an elliptic integro-differential operator and gradient constraint}
\author{Harold A. Moreno-Franco\footnote{National Research University Higher School of Economics, Moscow. Russia.  \textsf{hmoreno@hse.ru}}}
\date{}
\begin{document}	
\maketitle
\begin{abstract}
\noindent The main goal of this paper is to establish existence, regularity and uniqueness results for the solution of a Hamilton-Jacobi-Bellman (HJB) equation, whose  operator is an elliptic integro-differential operator. The HJB equation studied in this work arises in singular stochastic control problems where the state process is a controlled $d$-dimensional L\'evy process.
\end{abstract}

\section{Introduction and main results}
Our main goal  is to establish the existence,  uniqueness  and regularity of the solution $u$  to the  HJB equation
\begin{equation}\label{p1}	
\begin{cases}
  \max\{qu- \Gamma u-h,|\deri^{1} u|^{2}-1\}=0, & \text{a.e. in}\ B_{R}(0),\\
  u=0,&\text{on}\ \partial B_{R}(0),
\end{cases}  
\end{equation}
where $B_{R}(0)\subset \R^{d}$, with $R>0$ and $d\geq2$ fixed. The components of this equation are: 
\begin{enumerate}[(i)]
\item A constant $q>0$ and a positive function $h:\overline{B_{R}(0)}\longrightarrow\R$.

\item An integro-differential operator $\Gamma$, which has two parts,  an elliptic partial differential operator and an integral operator, i.e.
\begin{align}\label{p6}
  \Gamma u(x)&\eqdef\frac{1}{2}\tr(\sigma\deri^{2} u(x))+\langle\deri^{1} u(x),\gamma\rangle+\int_{\R^{*}}(E(u)(x+z)-u(x)-\langle\deri^{1}u(x),z\rangle)\nu(\der z),
\end{align}
with $x\in B_{R}(0)$.  Here $|\cdot|$ is the Euclidean norm, $\langle\cdot,\cdot\rangle$ is the inner product, $\tr(\cdot)$ is the trace of matrix, $\deri^{1}u=(\partial_{1}u,\dots,\partial_{d}u)$, $\deri^{2}u=(\partial^{2}_{ij}u)_{d\times d}$, $\gamma=(\gamma_{1} ,\dots,\gamma_{d})\in\R^{d}$,  $\sigma=(\sigma_{ij})_{d\times d}\in\R^{d\times d}$ is a positive definite matrix, and $\nu$ is a measure in $\R^{*}\eqdef\R^{d}\setminus\{0\}$ such that $\int_{\R^{*}}\nu(\der z)<\infty$ and $\int_{\R^{*}}|z|\nu(\der z)<\infty$. The  operator $E:\hol^{k,\alpha}(\overline{B_{R}(0)})\longrightarrow\hol^{k,\alpha}_{\comp}(\R^{d})$,  with $k\geq0$ and $\alpha\in[0,1]$, is a continuous linear operator that has the following properties: there exist constants $C=C(k,R)>0$ and $b>0$ such that for every $w\in\hol^{k,\alpha}(\overline{B_{R}(0)})$, 
\begin{equation}\label{p13.1.0}
\begin{cases}
E(w)\,\bigr |_{\overline{B_{R}(0)}}=w,\\
\sop[E(w)]\  \text{is compact,}\\  
\sop[E(w)]\subset B_{R+\frac{b}{2}}(0),\\
||E(w)||_{\hol^{k,\alpha}(\R^{d})}\leq C||w||_{\hol^{k,\alpha}(\overline{B_{R}(0)})},
\end{cases}
\end{equation}
where $\sop[E(w)]\eqdef\{x\in\R^{d}:E(w)(x)\neq0 \}$. The norm $||\cdot||_{\hol^{k,\alpha}(\cdot)}$ is as in (\ref{sp7}) below. This operator is needed to give a sense to the integral term of (\ref{p6}) because $u$ is only defined in $\overline{B_{R}(0)}$. Further details about $E$ will be given in Section \ref{Ch2}.
 \end{enumerate}

Before describing the motivation for studying this equation, let us make it clear that notations of functions spaces in the paper are standard and can be consulted  at  the end of this section.

The stochastic control problem related  to the HJB equation (\ref{p1}) on the whole space, is given when the controlled process $Y=\{Y_{t}:t\geq0\}$ is a $d$-dimensional L\'evy process whose components are a Brownian motion with drift and a Poisson compound process; see Subsection \ref{prob1} below.  Recall that  a L\'evy process is a c\`adl\`ag process  with independent and stationary increments. For background on L\'evy processes we refer to  \cite{sato}, which will be our main reference. 

Since  $\int_{\R^{*}}|z|\nu(\der z)<\infty$ and the continuous linear operator $E$ satisfies (\ref{p13.1.0}), we see that $\Gamma$, given in (\ref{p6}), can be written as
\begin{align}\label{p1.0.5}
  \Gamma u(x)&=\frac{1}{2}\tr(\sigma\deri^{2} u(x))+\langle\deri^{1} u(x),\widetilde{\gamma}\rangle+\int_{\R^{*}}(E(u)(x+z)-u(x))\nu(\der z),\ \text{for all}\ x\in B_{R}(0),
\end{align}
where 
\begin{equation}\label{p.1.0.1}
\widetilde{\gamma}\eqdef\gamma-\int_{\R^{*}}z\nu(\der z).
\end{equation}
The  operator $\Gamma$ corresponds  to the infinitesimal generator of a $d$-dimensional L\'evy process  $Y=\{Y_{t}:t\geq0\}$ given as in (\ref{p8.1}). A simple example of continuous L\'evy processes is a $d$-dimensional standard Brownian motion. 

When the controlled process  $Y$ is a two-dimensional standard Brownian motion, Soner and Shreve \cite{soner} showed that the value function related to this singular stochastic control problem, satisfies the following HJB equation
\begin{equation}\label{HJB1.0}
\max\{u-\Delta u-h,|\deri^{1}u|^{2}-1\}=0,
\end{equation}
where $\Delta u\eqdef\partial^{2}_{11}u+\partial^{2}_{22}u$, and $h\in \hol^{2,1}_{\loc}(\R^{2})$ is a strictly convex function, which  has some properties of polynomial growth.  Soner and Shreve \cite{soner} proved that there exists a unique solution $u\in\hol^{2,\alpha}_{\loc}(\R^{2})$ to  the  equation (\ref{HJB1.0}), which is a non-negative convex function. Also, they showed that the value function given in (\ref{p1.0.3}) satisfies the HJB equation (\ref{HJB1.0}), when the controlled process is a two-dimensional standard Brownian motion. In case that the controlled process is a $d$-dimensional standard Brownian motion, with $d>2$, Kruk \cite{kruk} showed that the value function of this stochastic control problem is related to the solution of the HJB equation (\ref{HJB1.0}), with $h:\R^{d}\longrightarrow\R$ and $\Delta u\eqdef\sum_{i=1}^{d}\partial^{2}_{ii}u$. In this case, the solution to the equation (\ref{HJB1.0}) is in $\sob^{2,\infty}_{\loc}(\R^{d})$; see \cite{menaldi}.  In our setting the controlled process is allowed to be a more general $d$-dimensional L\'evy process, it has a continuous component given by a Brownian motion with drift and a component with jumps given by  a compound Poisson process, whose jumps occur at  exponential times  with parameters  $\nu(\R^{*})$ and jump sizes distributed as  $\nu(\R^{*})^{-1}\nu(\der z)$. This makes that the HJB equation, given in (\ref{p1}) on the whole space, differs from (\ref{HJB1.0}) by an integral term coming from the compound Poisson process in the controlled process, whose infinitesimal generator is closely related to the integral term of (\ref{p1.0.5}).

A closely related problem has been recently considered by Menaldi and Robin \cite{menal2}. There they  studied a singular control problem for a multidimensional Gaussian-Poisson process, and  announced a relationship between the value function to this problem and the solution  to the corresponding HJB equation, on the whole space, which is similar to \eqref{HJB1.0}. The multidimensional Gaussian-Poisson process is a L\'evy process where it only has  a $d$-dimensional standard Brownian motion and a jump process whose L\'evy measure $\nu$ satisfies $\int_{\R^{*}}|z|^{p}\nu(\der z)<\infty$, for all $p\geq2$. There, the main arguments to justify that the value function is the solution to the HJB equation a.e.,  and is a twice  weakly differentiable function on $\R^{d}$, have been highlighted. 

The main difference between Menaldi and Robin work \cite{menal2} and ours, is that we consider a HJB equation on a ball; see \eqref{p1},whilst in the former the equation is defined in whole space. In this setting  we establish the existence, regularity and uniqueness of the solution $u$ of \eqref{p1}. Among other things, this requires the inclusion of the operator $E$, given in \eqref{p13.1.0}, to give a sense to the integral term in \eqref{p1.0.5}, because $u$ is only defined in $\overline{B_{R}(0)}$.      

Further than the mathematical motivation for studying the above described problem, this is intimately related to a singular control problem, described in Subsection \ref{prob1}, which arises in risk theory. Indeed, an important problem in risk theory   is to determine an optimal dividend payment strategy for an insurance company to pay to its shareholders. This problem has been intensively studied when the insurance company's surplus is modelled by a unidimensional process. For example, when this is described by a Brownian motion with drift \cite{ger, jean, tak}; a Cramer-Lundberg process  \cite{az, bu, ger, sc, zho}; or a diffusion process \cite{asm, asta}. Recently, it has been analyzed in the more general case when the risk process is a spectrally negative L\'evy process \cite{app1, kipal, loe, rena, zho2}, i.e. a L\'evy process which L\'evy measure is supported in $(-\infty,0)$ \cite{kyp}.  The results obtained in this paper and its applications to risk theory are the topic of a work in progress by the author.

A classical approach to guarantee the existence and regularity of the HJB equation (\ref{p1}), when the operator $\Gamma$ has only the differential term,  consists in studying first the solution $u^{\varepsilon}$ to the non-linear differential Dirichlet problem 
\begin{equation}\label{p13.0}
  \begin{cases}
  qu^{\varepsilon}-\Gamma  u^{\varepsilon}+ \psi_{\varepsilon}(|\deri^{1} u^{\varepsilon}|^{2})=h, & \text{in}\ B_{R}(0),\\
   u^{\varepsilon}=0,   &\text{on}\ \partial B_{R}(0);
  \end{cases}
  \end{equation}
where the \textit{penalizing function} $\psi_{\varepsilon}:\R\longrightarrow\R$, with $\varepsilon\in(0,1)$, is defined by 
\begin{equation}\label{p12.2}
\psi_{\varepsilon}(r)\eqdef\psi\biggl(\frac{r-1}{\varepsilon}\biggr),\  \text{for all}\ r\in\R,  
\end{equation}
with $\psi\in\hol^{\infty}(\R)$ such that
\begin{equation}\label{p12.1}
  \begin{cases}
  \psi(r)=0,& \text{for all}\ r\leq0,\\
  \psi(r)>0,& \text{for all}\ r>0,\\
  \psi(r)=r-1,& \text{for all}\ r\geq2,\\
  \psi'(r)\geq0,\ \psi''(r)\geq0, & \text{for all}\ r\in\R.
  \end{cases}
\end{equation}
The method used in (\ref{p13.0}) is usually called penalty method and was introduced by L. C. Evans  to establish existence and regularity of solutions   to second order  elliptic equations  with gradient constraints \cite{evans}. This method has  also been  used in other works like \cite{ishii, soner, hynd, hynd2}.  

To study the HJB equation (\ref{p1}), in the case that the operator $\Gamma$ is given by (\ref{p6}), the literature suggests us that we need first to guarantee  the existence and regularity of the solution to the Dirichlet problem (\ref{p13.0}), with  $\Gamma$ as in (\ref{p6}). Once this is done, we need to  establish  uniform estimates of the solutions to the non-linear integro-differential Dirichlet (NIDD) problem (\ref{p13.0}) that allow us to pass to  the limit as $\varepsilon\rightarrow0$,  in a weak sense in (\ref{p13.0}), which leads to the existence and regularity of the solution to the HJB equation (\ref{p1}). 

The following hypotheses will be assumed throughout the paper.

\noindent\textbf{Hypotheses}
{\it
\begin{enumerate}[(H1)]
\item\label{h1}The function $h\in\hol^{2}(\overline{B_{R}(0)})$ is non-negative and $||h||_{\hol^{2}(\overline{B_{R}(0)})}\leq C_{0}$, for some constant $C_{0}>0$.

\item\label{h2}The L\'evy measure $\nu$ satisfies $\nu_{0}\eqdef\nu(\R^{*})<\infty$ and 
\begin{equation*}
\nu_{1}\eqdef\int_{\R^{*}}|z|\nu(\der z)<\infty.
\end{equation*}
In addition, we assume that  $\nu$   has a density $\kappa$ with respect to the Lebesgue measure $\der z$, i.e. $\nu(\der z)=\kappa(z)\der z$, such that $\kappa\in\hol^{0,\alpha}(\R^{*})$, for some $\alpha\in(0,1)$ fixed.

\item \label{h3} There exist real numbers $0<\theta\leq\Theta$  such that the coefficients of the differential part of  $\Gamma$ satisfy  
\begin{equation*}
\theta|\zeta|^{2}\leq\langle\sigma \zeta,\zeta\rangle\leq \Theta|\zeta|^{2},\ \text{for all}\ \zeta\in \R^{d}.
\end{equation*}

\item\label{h4} The discount parameter $q$ is large enough and such that
\begin{equation*}
2A_{0}\nu(B_{R+\frac{b}{2}}(0))<q+\nu_{0}\defeq q',\ \text{with}\ A_{0}\in (1,2). 
\end{equation*}

\end{enumerate}
}
Let us now make some comments on the hypotheses (H\ref{h1})--(H\ref{h4}). The Hypotheses (H\ref{h1}) and (H\ref{h4}) allow us to ensure the existence, uniqueness and regularity to the non-negative solution $u^{\varepsilon}$ of the  NIDD problem (\ref{p13.0}); see Theorem \ref{solNIDD.1.0} and Propositions \ref{intop4.1}--\ref{upos1}. The main reason of the Hypothesis (H\ref{h2}) is that it is necessary  to guarantee  the existence, uniqueness and regularity of the solution $\mathpzc{u}^{\varepsilon}(\cdot;w)$ to the non-linear Dirichlet problem (\ref{ed3}), when $w\in\hol^{0}(\overline{B_{R}(0)})$; see Subsection \ref{NLDP.0.1}. Defining the map $T_{\varepsilon}$ as in (\ref{mapT.1.0}) and using the Hypothesis (H\ref{h4}), we prove $T_{\varepsilon}$ is a contraction mapping in $(\hol^{0}(\overline{B_{R}(0)}),||\cdot||_{\hol^{0}(\overline{B_{R}(0)})})$; see Lemma \ref{contrac1}.  Then, by the contraction fixed point Theorem; see  \cite[Thm. 5.1 p.74]{gilb}, we verify the existence, uniqueness and regularity  to the solution $u^{\varepsilon}$ of the NIDD problem (\ref{p13.0}). Finally, Hypothesis (H\ref{h3}) is a classical assumption for differential operators called \textit{ellipticity property}, see, e.g. \cite{evans,ishii,lady, gimb,gilb,garroni,mark,hynd,bayr}.

Under the assumptions (H\ref{h1})--(H\ref{h4}), the main results obtained in this do\-cu\-ment are the following. 
\begin{teor}\label{princ1.0.1}
If $d< p<\infty$, there exists a unique non-negative solution $u$ to the HJB equation (\ref{p1}) in the space  $\hol^{0,1}(\overline{B_{R}(0)})\cap\sob^{2,p}_{\loc}(B_{R}(0))$, where the operator $\Gamma$ is as in (\ref{p6}).
\end{teor}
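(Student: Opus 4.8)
The plan is to follow the penalization scheme outlined in the introduction and combine it with uniform estimates plus a compactness argument. First I would invoke the results already announced in the excerpt — Theorem~\ref{solNIDD.1.0} together with Propositions~\ref{intop4.1}--\ref{upos1} — to obtain, for each $\varepsilon\in(0,1)$, a unique non-negative solution $u^{\varepsilon}\in\hol^{0}(\overline{B_{R}(0)})\cap\sob^{2,p}_{\loc}(B_{R}(0))$ (indeed with enough H\"older regularity to make the integral term in \eqref{p1.0.5} meaningful via the extension operator $E$) to the NIDD problem \eqref{p13.0}. The core analytic work is then to derive estimates on $u^{\varepsilon}$ that are \emph{uniform in} $\varepsilon$: namely (a) a uniform $\hol^{0}(\overline{B_{R}(0)})$ bound, coming from a maximum-principle / comparison argument using Hypotheses (H\ref{h1}) and (H\ref{h4}); (b) a uniform Lipschitz bound $\|\deri^{1}u^{\varepsilon}\|_{\hol^{0}}\le C$, which is the role the penalty term $\psi_{\varepsilon}(|\deri^{1}u^{\varepsilon}|^{2})$ is designed to play — differentiating the equation, or using a barrier/Bernstein-type argument on $|\deri^{1}u^{\varepsilon}|^{2}$, should force $|\deri^{1}u^{\varepsilon}|^{2}-1$ to stay controlled and in particular give a uniform bound on $\psi_{\varepsilon}(|\deri^{1}u^{\varepsilon}|^{2})$; and (c), once $h$ and the Lipschitz norm and the penalty term are all uniformly bounded, the equation $qu^{\varepsilon}-\Gamma u^{\varepsilon}=h-\psi_{\varepsilon}(|\deri^{1}u^{\varepsilon}|^{2})$ has a right-hand side bounded in $\Lp^{\infty}$, so interior $\sob^{2,p}$ elliptic estimates (Calder\'on--Zygmund, for the second-order part, with the integral term moved to the right-hand side and estimated using \eqref{p13.1.0} and Hypothesis (H\ref{h2})) yield $\|u^{\varepsilon}\|_{\sob^{2,p}(B_{R'}(0))}\le C(R')$ for every $R'<R$, uniformly in $\varepsilon$.

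With these uniform bounds in hand I would pass to the limit. By the Rellich--Kondrachov / Arzel\`a--Ascoli compactness afforded by the uniform Lipschitz bound and the uniform local $\sob^{2,p}$ bound, extract a subsequence $u^{\varepsilon_{n}}$ converging to some $u$ strongly in $\hol^{1}_{\loc}(B_{R}(0))$ and weakly in $\sob^{2,p}_{\loc}(B_{R}(0))$, with $u\in\hol^{0,1}(\overline{B_{R}(0)})$ and the boundary condition $u=0$ on $\partial B_{R}(0)$ preserved by uniform convergence up to the boundary (here the uniform Lipschitz bound gives equicontinuity on $\overline{B_{R}(0)}$). Strong $\hol^{1}_{\loc}$ convergence also passes through the nonlocal term: since $E$ is continuous and $u^{\varepsilon_{n}}\to u$ uniformly on $\overline{B_{R}(0)}$, one gets $E(u^{\varepsilon_{n}})\to E(u)$ uniformly, hence $\int_{\R^{*}}(E(u^{\varepsilon_{n}})(x+z)-u^{\varepsilon_{n}}(x))\nu(\der z)\to\int_{\R^{*}}(E(u)(x+z)-u(x))\nu(\der z)$ by dominated convergence using $\nu_{0}<\infty$. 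The delicate point in the limit is the penalty term: $\psi_{\varepsilon_{n}}(|\deri^{1}u^{\varepsilon_{n}}|^{2})$ is nonnegative and locally bounded in $\Lp^{p}$, so (up to a further subsequence) it converges weakly to some nonnegative $g$; one shows $g$ must vanish on $\{|\deri^{1}u|^{2}<1\}$ (because there $\psi_{\varepsilon_{n}}(|\deri^{1}u^{\varepsilon_{n}}|^{2})\to0$ by the definition \eqref{p12.2}--\eqref{p12.1}) and that the Lipschitz bound forces $|\deri^{1}u|^{2}\le1$ a.e. Reorganizing, $qu-\Gamma u-h=-g\le0$ a.e., $|\deri^{1}u|^{2}-1\le0$ a.e., and on $\{|\deri^{1}u|^{2}<1\}$ we have $qu-\Gamma u-h=0$; this is exactly the statement that $\max\{qu-\Gamma u-h,\,|\deri^{1}u|^{2}-1\}=0$ a.e., and non-negativity of $u$ is inherited from non-negativity of each $u^{\varepsilon_{n}}$.

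For uniqueness I would argue by comparison, which here is subtler than in the purely differential case because $\Gamma$ contains the nonlocal term built from $E$. Suppose $u_{1},u_{2}$ are two non-negative solutions in $\hol^{0,1}(\overline{B_{R}(0)})\cap\sob^{2,p}_{\loc}(B_{R}(0))$. At an interior maximum point $x_{0}$ of $u_{1}-u_{2}$ (which exists since $u_{1}-u_{2}=0$ on the boundary; if the max is $\le0$ we are done, otherwise $x_0$ is interior), on one hand $|\deri^{1}u_{1}(x_{0})|^{2}\le1$ so the constraint is not the active one in a neighborhood argument, forcing $qu_{1}-\Gamma u_{1}-h=0$ near where it matters; on the other hand $qu_{2}-\Gamma u_{2}-h\le0$; subtracting and using $\deri^{1}(u_{1}-u_{2})(x_{0})=0$, $\deri^{2}(u_{1}-u_{2})(x_{0})\le0$, together with Hypothesis (H\ref{h4}) to absorb the nonlocal contribution $\int_{\R^{*}}(E(u_{1}-u_{2})(x_{0}+z))\nu(\der z)\le A_{0}\,(\sup(u_{1}-u_{2}))\,\nu(B_{R+b/2}(0))$ bounded strictly by $q'\sup(u_{1}-u_{2})$, yields $q(u_{1}-u_{2})(x_{0})\le (\text{something strictly less than }q'(u_{1}-u_{2})(x_{0}))$, a contradiction unless $\sup(u_{1}-u_{2})\le0$; by symmetry $u_{1}=u_{2}$. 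The main obstacle I anticipate is step (b), the uniform gradient (Lipschitz) estimate: controlling $|\deri^{1}u^{\varepsilon}|$ independently of $\varepsilon$ in the presence of the nonlocal operator requires care, since differentiating the equation produces a term $\int_{\R^{*}}(\deri^{1}E(u^{\varepsilon})(x+z)-\deri^{1}u^{\varepsilon}(x))\nu(\der z)$ that must be dominated by the good term $-\psi_{\varepsilon}'(|\deri^{1}u^{\varepsilon}|^{2})\,\partial(|\deri^{1}u^{\varepsilon}|^{2})$ — this is exactly where Hypotheses (H\ref{h2}) (so that $E$ maps into $\hol^{1}$ with the bound in \eqref{p13.1.0}) and (H\ref{h4}) (the smallness of $q'$ relative to the nonlocal mass) are indispensable, and getting the constants to line up is the heart of the argument.
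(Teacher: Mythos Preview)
Your existence argument tracks the paper's Section~\ref{chER} closely: the uniform $\hol^{0}$, Lipschitz, local penalty and interior $\sob^{2,p}$ estimates (Lemmas~\ref{inde}, \ref{lemafrontera1}, \ref{cotaphi}, \ref{lemafrontera3}), the Arzel\`a--Ascoli / weak-$\Lp^{p}$ extraction, the convergence of the nonlocal term via $E$, and the dichotomy on $\{|\deri^{1}u|^{2}<1\}$ are all as in the paper.

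The uniqueness argument, however, has a real gap. At an interior maximum $x_{0}$ of $u_{1}-u_{2}$ you only have $\deri^{1}u_{1}(x_{0})=\deri^{1}u_{2}(x_{0})$ and $|\deri^{1}u_{i}(x_{0})|\le 1$; nothing rules out $|\deri^{1}u_{1}(x_{0})|=1$, and in that case the gradient constraint may well be the active one, so you cannot conclude $qu_{1}-\Gamma u_{1}-h=0$ near $x_{0}$. The paper gets around this with a scaling trick: for small $\rho>0$ set $f=(1-\rho)u_{1}-u_{2}$ and take $x_{1}^{*}$ an interior maximum of $f$. Then $\deri^{1}u_{2}(x_{1}^{*})=(1-\rho)\deri^{1}u_{1}(x_{1}^{*})$, so $|\deri^{1}u_{2}(x_{1}^{*})|\le 1-\rho<1$ \emph{strictly}, which forces the equality $qu_{2}-\Gamma u_{2}-h=0$ in a neighborhood of $x_{1}^{*}$, while $qu_{1}-\Gamma u_{1}-h\le 0$ always. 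Subtracting gives $qf-\dif f-\inte'E(f)\le -\rho h\le 0$ there.

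A second omission is that $u_{1},u_{2}\in\sob^{2,p}_{\loc}$ only, not $\hol^{2}$, so the pointwise second-derivative test at $x_{1}^{*}$ is not available; the paper invokes Bony's maximum principle \cite{lions} (valid precisely because $p>d$) to obtain $\limess_{x\to x_{1}^{*}}\tfrac{1}{2}\tr(\sigma\deri^{2}f)(x)\le 0$. Finally, the nonlocal term is not handled via (H\ref{h4}) as you suggest, but directly: since $x_{1}^{*}$ is a global maximum of $f$ on $\overline{B_{R}(0)}$ with $f=0$ on $\partial B_{R}(0)$, one has $E(f)(x_{1}^{*}+z)-f(x_{1}^{*})\le 0$ for all $z$, so $\inte'E(f)(x_{1}^{*})\le 0$, which combined with $qf(x_{1}^{*})>0$ and $\rho h(x_{1}^{*})\ge 0$ yields the contradiction.
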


\begin{teor}\label{princ1.0}
For each $\varepsilon\in(0,1)$, there exists a unique non-negative solution $u^{\varepsilon}$ to the NIDD problem (\ref{p13.0}) in the space $\hol^{3,\alpha}(\overline{B_{R}(0)})$, where the operator $\Gamma$ is as in (\ref{p6}).
\end{teor}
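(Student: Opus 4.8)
The plan is to recast (\ref{p13.0}) as a fixed-point equation for the integral term and then apply the contraction fixed-point theorem. By (\ref{p1.0.5}), problem (\ref{p13.0}) is equivalent to
\begin{equation*}
q'\mathpzc{u}-\frac{1}{2}\tr(\sigma\deri^{2}\mathpzc{u})-\langle\deri^{1}\mathpzc{u},\widetilde{\gamma}\rangle+\psi_{\varepsilon}(|\deri^{1}\mathpzc{u}|^{2})=h+\inte_{w}\ \text{in}\ B_{R}(0),\qquad \mathpzc{u}=0\ \text{on}\ \partial B_{R}(0),
\end{equation*}
where $q'=q+\nu_{0}$ and $\inte_{w}(x)\eqdef\int_{\R^{*}}E(w)(x+z)\nu(\der z)$, taken with $w=\mathpzc{u}$. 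First I would fix $w\in\hol^{0}(\overline{B_{R}(0)})$ and solve this \emph{frozen} problem (\ref{ed3}). Its datum $h+\inte_{w}$ lies in $\hol^{0,\alpha}(\overline{B_{R}(0)})$: indeed $h\in\hol^{2}(\overline{B_{R}(0)})$ by (H\ref{h1}), while the substitution $y=x+z$ gives $\inte_{w}=E(w)*\check{\kappa}$ with $\check{\kappa}(\cdot)\eqdef\kappa(-\,\cdot)\in\Lp^{1}(\R^{d})$ (because $\nu_{0}<\infty$), so by (H\ref{h2}) and the compact support of $E(w)$ in (\ref{p13.1.0}) one gets $||\inte_{w}||_{\hol^{0,\alpha}(\overline{B_{R}(0)})}\leq C\,||w||_{\hol^{0}(\overline{B_{R}(0)})}$. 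Using the uniform ellipticity (H\ref{h3}) and the smoothness and convexity of $\psi_{\varepsilon}$, the analysis of Subsection \ref{NLDP.0.1} provides a unique solution $\mathpzc{u}^{\varepsilon}(\cdot;w)\in\hol^{2,\alpha}(\overline{B_{R}(0)})$ of (\ref{ed3}).

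This defines the map $T_{\varepsilon}$ of (\ref{mapT.1.0}) by $T_{\varepsilon}(w)\eqdef\mathpzc{u}^{\varepsilon}(\cdot;w)$, viewed as a self-map of $\hol^{0}(\overline{B_{R}(0)})$ through the inclusion $\hol^{2,\alpha}\hookrightarrow\hol^{0}$. For $w_{1},w_{2}\in\hol^{0}(\overline{B_{R}(0)})$, subtracting the two equations and linearizing $\psi_{\varepsilon}$ via the mean value theorem, the difference $T_{\varepsilon}(w_{1})-T_{\varepsilon}(w_{2})$ solves a linear uniformly elliptic Dirichlet problem with zero boundary data, zeroth-order coefficient equal to $q'>0$, no further zeroth-order term, and right-hand side $\inte_{w_{1}-w_{2}}$. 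The maximum principle then gives $||T_{\varepsilon}(w_{1})-T_{\varepsilon}(w_{2})||_{\hol^{0}}\leq(q')^{-1}||\inte_{w_{1}-w_{2}}||_{\hol^{0}}$, and a bound $||\inte_{w_{1}-w_{2}}||_{\hol^{0}}\leq c\,||w_{1}-w_{2}||_{\hol^{0}}$ with $c<q'$, obtained from (\ref{p13.1.0}) and Hypothesis (H\ref{h4}), makes the contraction constant strictly less than $1$; this is Lemma \ref{contrac1}. By the contraction fixed-point theorem \cite[Thm. 5.1 p.74]{gilb}, $T_{\varepsilon}$ has a unique fixed point $u^{\varepsilon}\in\hol^{0}(\overline{B_{R}(0)})$, which by construction is a solution of (\ref{p13.0}) lying in $\hol^{2,\alpha}(\overline{B_{R}(0)})$; conversely any solution of (\ref{p13.0}) in $\hol^{2,\alpha}(\overline{B_{R}(0)})$ — in particular any $\hol^{3,\alpha}$ solution — is a fixed point of $T_{\varepsilon}$, which gives uniqueness.

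It then remains to improve the regularity of $u^{\varepsilon}$ from $\hol^{2,\alpha}$ to $\hol^{3,\alpha}$ and to check $u^{\varepsilon}\geq0$. For the regularity I would exploit that $u^{\varepsilon}$ is a fixed point: since $u^{\varepsilon}\in\hol^{2,\alpha}(\overline{B_{R}(0)})$, (\ref{p13.1.0}) with $k=2$ gives $E(u^{\varepsilon})\in\hol^{2,\alpha}_{\comp}(\R^{d})$, and differentiating $\inte_{u^{\varepsilon}}=E(u^{\varepsilon})*\check{\kappa}$ twice under the integral sign (the convolution of a $\hol^{0,\alpha}_{\comp}(\R^{d})$ function with $\check{\kappa}\in\Lp^{1}(\R^{d})$ is again $\hol^{0,\alpha}$) yields $\inte_{u^{\varepsilon}}\in\hol^{2,\alpha}(\overline{B_{R}(0)})$; likewise $\psi_{\varepsilon}(|\deri^{1}u^{\varepsilon}|^{2})\in\hol^{1,\alpha}(\overline{B_{R}(0)})$ since $\deri^{1}u^{\varepsilon}\in\hol^{1,\alpha}$ and $\psi_{\varepsilon}$ is smooth. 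Hence $u^{\varepsilon}$ solves the \emph{linear} constant-coefficient uniformly elliptic equation
\begin{equation*}
q'u^{\varepsilon}-\frac{1}{2}\tr(\sigma\deri^{2}u^{\varepsilon})-\langle\deri^{1}u^{\varepsilon},\widetilde{\gamma}\rangle=h+\inte_{u^{\varepsilon}}-\psi_{\varepsilon}(|\deri^{1}u^{\varepsilon}|^{2})\ \text{in}\ B_{R}(0),
\end{equation*}
whose right-hand side is in $\hol^{1,\alpha}(\overline{B_{R}(0)})$, with $u^{\varepsilon}=0$ on the smooth boundary $\partial B_{R}(0)$; classical Schauder theory up to the boundary then gives $u^{\varepsilon}\in\hol^{3,\alpha}(\overline{B_{R}(0)})$. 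The nonnegativity of $u^{\varepsilon}$ follows from the maximum principle applied to (\ref{p13.0}): at a point $x_{0}$ where $u^{\varepsilon}$ attains its minimum over $\overline{B_{R}(0)}$, if $x_{0}\in B_{R}(0)$ then $\deri^{1}u^{\varepsilon}(x_{0})=0$ and $\deri^{2}u^{\varepsilon}(x_{0})\geq0$, so $\psi_{\varepsilon}(|\deri^{1}u^{\varepsilon}(x_{0})|^{2})=\psi_{\varepsilon}(0)=0$ and, using the corresponding sign property of $E$, $\Gamma u^{\varepsilon}(x_{0})\geq0$, whence $q u^{\varepsilon}(x_{0})\geq h(x_{0})\geq0$; if $x_{0}\in\partial B_{R}(0)$ then $u^{\varepsilon}(x_{0})=0$. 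In either case $u^{\varepsilon}\geq0$ (Proposition \ref{upos1}).

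The main obstacle is the solvability of the frozen problem (\ref{ed3}) in $\hol^{2,\alpha}(\overline{B_{R}(0)})$, i.e. the content of Subsection \ref{NLDP.0.1}: the crux is the a priori $\hol^{1,\alpha}$ estimate on $\mathpzc{u}^{\varepsilon}(\cdot;w)$, since the term $\psi_{\varepsilon}(|\deri^{1}\mathpzc{u}|^{2})$ grows quadratically in $\deri^{1}\mathpzc{u}$; it is precisely here that the ellipticity (H\ref{h3}) and the monotonicity, convexity and favourable sign of $\psi_{\varepsilon}$ are used, after which Schauder estimates raise the solution to $\hol^{2,\alpha}$.
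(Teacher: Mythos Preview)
Your overall plan matches the paper's: solve the frozen differential problem (\ref{ed3}) in $\hol^{2,\alpha}$, run a contraction in $\hol^{0}$ via $T_{\varepsilon}$, then bootstrap and check nonnegativity. The main structural difference is that the paper routes both the contraction estimate and the nonnegativity through the stochastic-control representation: it identifies $\mathpzc{u}^{\varepsilon}(\cdot;w)$ with the value function $V^{\varepsilon}(\cdot;w)$ (Lemma \ref{convexu1}) and then compares $V^{\varepsilon}(\cdot;w_{1})$ and $V^{\varepsilon}(\cdot;w_{2})$ at the level of expected costs (proof of Lemma \ref{contrac1}). Your PDE route for the contraction---subtracting the two equations, noting that $\deri^{1}(\mathpzc{u}_{1}-\mathpzc{u}_{2})=0$ at an interior maximum so the $\psi_{\varepsilon}$ terms cancel, and applying the classical maximum principle---is correct and more elementary; it is the same mechanism the paper uses for uniqueness in Lemma \ref{uniqu1}, pushed to a quantitative estimate. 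This buys you independence from the probabilistic machinery for existence, uniqueness and the $\hol^{3,\alpha}$ bootstrap.

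Your nonnegativity argument, however, has a genuine gap. At an interior minimum $x_{0}$ you claim $\Gamma u^{\varepsilon}(x_{0})\geq0$ ``using the corresponding sign property of $E$''. The local terms do have the right sign, but the nonlocal part is $\int_{\R^{*}}\bigl(E(u^{\varepsilon})(x_{0}+z)-u^{\varepsilon}(x_{0})\bigr)\nu(\der z)$, and for $x_{0}+z$ in the collar $B_{R+\frac{b}{2}}(0)\setminus\overline{B_{R}(0)}$ there is no reason why $E(u^{\varepsilon})(x_{0}+z)\geq u^{\varepsilon}(x_{0})$: the Stein-type extension of Theorem \ref{exthol1} is built from reflections with coefficients that sum to one but are not all positive, so it does not preserve the infimum (Proposition \ref{cotEu1} only gives $|E(w)|\leq 2A_{0}\|w\|_{\hol^{0}}$, and none of (\ref{p13.1.0}) carries sign information). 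The paper sidesteps this precisely by going probabilistic: Proposition \ref{upos1} reads off $u^{\varepsilon}\geq0$ from the representation in Lemma \ref{convexu1.0} as an expected integral of $h+l_{\varepsilon}\geq0$. If you want a PDE-only proof of $u^{\varepsilon}\geq0$, you must either use an extension with a genuine positivity property or devise a comparison argument that does not evaluate the nonlocal term pointwise at the minimum.
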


Previous to this work, the  equations (\ref{p1}) and (\ref{p13.0}) have mainly  been studied in the case that $\Gamma$ is an elliptic differential operator; see, e.g.  \cite{evans, ishii, soner,  menaldi, kruk, hynd2}. The closest to our work is the the paper of Menaldi and Robin \cite{menal2}. 

The solution $u$ to the HJB equation (\ref{p1}), obtained here, is in a strong sense which should be contrasted with recent results in the topic, where the solutions are established in the viscosity sense; see \cite{mark,bayr}. Although the NIDD problem (\ref{p13.0}) is a tool to guarantee the existence of the HJB equation (\ref{p1}), this turns out to be a problem of interest in itself  because it is also related with optimal stochastic control problems where the state process is a controlled $d$-dimensional L\'evy process as in (\ref{p8.1}). The optimal stochastic control problems related to the NIDD equation (\ref{p13.0})  will be analyzed in Subsection \ref{prob1}.

We will establish Theorems \ref{princ1.0.1} and \ref{princ1.0}, by probabilistic, integro-differential and PDE classical methods, inspired by Evans \cite{evans}, Lenhart \cite{lenh}, Gimbert and Lions \cite{gimb}, Soner and Shreve \cite{soner}, Garroni and Menaldi \cite{garroni} and Hynd \cite{hynd2}. Since $\nu(\R^{*})<\infty$, we have that the HJB equation (\ref{p1}) can be written as
\begin{equation}\label{p.1.0}	
\begin{cases}
  \max\{q'u- \Gamma^{'} u-h,|\deri^{1} u|^{2}-1\}=0, & \text{a.e. in}\ B_{R}(0),\\
  u=0,&\text{on}\ \partial B_{R}(0),
\end{cases}  
\end{equation}
where
\begin{equation}\label{p.1.0.0}
\begin{cases}
q'\eqdef q+\nu(\R^{*})= q+\nu_{0},\\
\Gamma^{'}u(x)\eqdef\frac{1}{2}\tr(\sigma\deri^{2} u(x))+\langle\deri^{1} u(x),\widetilde{\gamma}\rangle+\int_{\R^{*}}E(u)(x+z)\nu(\der z)\defeq \dif^{'}u(x)+\inted E(u)(x).
\end{cases}
\end{equation}
The differential and integral part of $\Gamma^{'}$ are  denoted by $\dif^{'}$ and $\inted$, respectively. Furthermore, consider same way than (\ref{p.1.0}), using (\ref{p.1.0.0})  we see that  the NIDD problem (\ref{p13.0}) can be written as
\begin{equation}\label{p.13.0}
  \begin{cases}
  q'u^{\varepsilon}-\Gamma^{'}  u^{\varepsilon}+ \psi_{\varepsilon}(|\deri^{1} u^{\varepsilon}|^{2})=h, & \text{in}\ B_{R}(0),\\
   u^{\varepsilon}=0,   &\text{on}\ \partial B_{R}(0),
  \end{cases}
  \end{equation}
Then, Theorems \ref{princ1.0.1} and \ref{princ1.0} are equivalent to proving that:  
{\it
\begin{enumerate}[(i)]
\item if $d< p<\infty$, there exists a unique non-negative solution $u$ to the HJB equation (\ref{p.1.0}) in the space  $\hol^{0,1}(\overline{B_{R}(0)})\cap\sob^{2,p}_{\loc}(B_{R}(0))$;

\item for each $\varepsilon\in(0,1)$, there exists a unique non-negative solution $u^{\varepsilon}$ to the NIDD problem (\ref{p.13.0}) in the space $\hol^{3,\alpha}(\overline{B_{R}(0)})$.
\end{enumerate}}
 
Finally let us comment that although in this work it is not established  the existence, regularity and uniqueness of the solution to the HJB equation (\ref{p1}) on the whole space, we will give a description of  the existent  relationship between this HJB equation and a singular stochastic control problem, where the controlled process $Y$ is a $d$-dimensional L\'evy process as in (\ref{p8.1}); see Subsection \ref{prob1} below. The study of  the HJB equation (\ref{p1}) on the whole space, is a work in progress by the author. 

In the following subsection, we shall explain the relationship between the equation (\ref{p1}) on the whole space and the singular stochastic control problem given by \eqref{p3}, and also the relationship between the equation (\ref{p13.0}) and the  stochastic control problem given by (\ref{conv3}); see Lemmas  \ref{verif} and \ref{convexu1.0} below. 

\subsection{Probabilistic interpretation}\label{prob1}
Through out this document, we will work on a filtered and complete probability space $(\Omega,\,\mathcal{F},\,\F=\{\mathcal{F}_{t}\}_{t\geq0},\,\Pro)$. The filtration $\F=\{\mathcal{F}_{t}\}_{t\geq0}$ is the one  generated by the $d$-dimensional L\'evy process   $Y=\{Y_{t}:t\geq0\}$, which is given by
\begin{align}\label{p8.1}
  Y_{t}=W_{t} +\widetilde{\gamma} t+\int_{[0,t]}\int_{\R^{*}}z\,\vartheta(\der s\times \der z),\ \text{for all}\ t\geq0,
\end{align}
where  $W=\{W_{t}:t\geq0\}$ is a $d$-dimensional Brownian motion with Gaussian covariance  matrix $\sigma$, $ \widetilde{\gamma}\in\R^{d}$ as in (\ref{p.1.0.1}), and $\vartheta$ is a Poisson random measure in $[0,\infty)\times\R^{*}$ equipped of the $\sigma$-algebra $\mathcal{B}$   generated by $\mathcal{B}[0,\infty)\times\mathcal{B}(\R^{*})$, with an intensity measure $\der t\times\nu(\der z)$.   The last part on the right side in (\ref{p8.1}) is a compound Poisson process with rate $\nu(\R^{*})$ and the distribution of its jumps is $\nu(\R^{*})^{-1}\nu(\der z)$. We assume furthermore that the filtration $\F$ is completed with the null sets of $\Pro$.

By the L\'evy-Khintchine formula \cite[p. 37]{sato} it is well known  that the L\'evy process $Y$ is determined by a triplet  $(\widetilde{\gamma},\sigma,\nu)$, where $\widetilde{\gamma}\in\R^{d}$ as in (\ref{p.1.0.1}), $\sigma$ is a positive definite matrix  of size $d\times d$ that satisfies (H\ref{h3}) and $\nu$ is a measure on $\R^{*}$ that satisfies (H\ref{h2}). In the present case  the characteristic exponent has the following form
\begin{align*}
  \Psi(\lambda)&=-\log(\E(\expo^{\I\langle\lambda,Y\rangle}))=-\I\langle\widetilde{\gamma},\lambda\rangle+\frac{1}{2}\langle\lambda\sigma,\lambda\rangle-\nu(\R^{*})\int_{\R^{*}}(\expo^{\I\langle\lambda,z\rangle}-1)\frac{ \nu(\der z)}{\nu(\R^{*})},
\end{align*}
for all $\lambda\in\R^{d}$, and we recall that  its infinitesimal generator is given by  (\ref{p1.0.5}). The \textit{state process} $X=\{X_{t}:t\geq0\}$ is defined as
  \begin{equation}\label{p3.0}
	X_{t}\eqdef x+Y_{t}+Z_{t},\ \text{for all}\ t\geq0,
\end{equation}
where $x\in\R^{d}$ is the initial condition, $Y=\{Y_{t}:t\geq0\}$ is a $d$-dimensional L\'evy process as in (\ref{p8.1}), and $Z=\{Z_{t}:t\geq0\}$ is a \textit{control process}. 

\subsubsection*{Probabilistic interpretation of the HJB equation  on the whole space}

In addition to the Hypotheses (H\ref{h1})--(H\ref{h4}), we need here to assume  others hypotheses. The reason for this, is to establish the existent relationship between the HJB equation (\ref{p1}) on the whole space and a singular stochastic control problem.  Assume that the  L\'evy measure of the process $Y$, $\nu$,  satisfies  
\begin{equation}\label{aditH1}
\int_{\R^{*}}\nu(\der z)<\infty\ \text{and}\ \int_{\R^{*}}\max\{|z|,|z|^{2}\}\nu(\der z)<\infty, 
\end{equation}
and here the control process $Z$ is given by $Z_{t}\eqdef\int_{[0,t]}N_{s}\,\der\xi_{s}$, for $t\geq0$, where $(N,\xi)=\{(N_{t},\xi_{t}):t\geq0\}$ is $\F$-adapted, $|N_{t}|=1\ \Pro\text{-a.s.}$, and $\xi$ is a nondecreasing, left-continuous process with $\xi_{0}=0$ $\Pro$-a.s.. Then,  the state process $X=\{X_{t}:t\geq0\}$ given in (\ref{p3.0}) takes the following form
  \begin{equation}\label{p3}
	X_{t}= x+Y_{t}+\int_{[0,t]}N_{s}\,\der\xi_{s},\ \text{for all}\ t\geq0.
\end{equation}
The process $N$ provides the direction and $\xi$  the intensity of the push applied to the state process $X$. Note that the jumps of the state process $X$ are inherited from $Y$ and $\xi$, and we assume that these processes do not jump at the same time $t$, i.e.
\begin{equation}\label{salto}
  \Delta X_{t}=X_{t}-X_{t-}=\Delta Y_{t}\uno_{\{\Delta Y_{t}\neq0,\,\Delta\xi_{t}=0\}}+N_{t}\Delta \xi_{t}\uno_{\{\Delta\xi_{t}\neq0,\,\Delta Y_{t}=0\}},
\end{equation}
for all $t\geq0$. For  $q>0$  and a control process $(N,\xi)$, the corresponding \textit{cost function} is defined as
\begin{equation*}
	V_{(N,\xi)}(x)=\E_{x}\biggl(\int_{[0,\infty)}\expo^{-qt}(h(X_{t})\,\der t+\der\xi_{t})\biggr),\ \text{for all}\ x\in\R^{d},
\end{equation*}
where $h\in \hol^{2,1}_{\loc}(\R^{d})$ is a strictly convex function satisfying for some positive constants $C_{0}$ and $c_{0}$,
  \begin{equation}\label{convh1}
  	\begin{cases}
    		0=h(0)\leq h(x)\leq C_{0}(1+|x|^{2}),\\
    		|\deri^{1} h(x)|\leq C_{0}(1+h(x)),\\
    		c_{0}|y|^{2}\leq \langle\deri^{2}h(x)y, y\rangle\leq C_{0}|y|^{2}(1+h(x)),
	\end{cases}
\end{equation}  
for all $x,y\in\R^{d}$. From (\ref{aditH1}) and (\ref{convh1}), we have $\E(h(Y_{t}))<\infty$. Then, the \textit{value function} is given by
\begin{equation}\label{p1.0.3}
	V(x)=\inf_{(N,\xi)}V_{(N,\xi)}(x),\ \text{for}\ x\in\R^{d}.
\end {equation}  
A heuristic derivation from dynamic programming principle; see \cite[Ch. VIII]{flem}, shows that the value function $V$ is related to the HJB equation
\begin{equation}\label{p1.0.4}
\max\{qu-\Gamma u-h,|\deri^{1} u|^{2}-1\}=0,
\end{equation}
where 
\begin{equation}\label{p1.0.5.0}
\Gamma u(x)= \frac{1}{2}\tr(\sigma\deri^{2} u(x))+\langle\deri^{1} u(x),\widetilde{\gamma}\rangle+\int_{\R^{*}}(u(x+z)-u(x))\nu(\der z).
\end{equation}
The relationship between the value function (\ref{p1.0.3}) and the HJB equation (\ref{p1.0.4}) is described in the following lemma, whose proof is in the appendix. 

\begin{lema}\label{verif}
Suppose that (\ref{aditH1}) and (\ref{convh1}) hold true. If $u$ is a convex function in $\hol^{2}(\R^{d})$, which is a solution of the HJB equation (\ref{p1.0.4}), then
\begin{enumerate}[(i)]
\item $u(x)\leq V(x)$, for each $x\in\R^{d}$;
\item given the initial condition  $X^{*}_{0}=x,\ x\in\R^{d}$, suppose that there exists a control process $(N^{*},\xi^{*})$ such that $V_{(N^{*},\xi^{*})}(x)<\infty$ and the state process $X^{*}$ satisfies
\begin{equation}\label{demo10.1}
  \begin{cases}
  qu(X^{*}_{t-})-\Gamma u(X^{*}_{t-})-h(X^{*}_{t-})=0, &  \\
  \int_{[0,t]}\uno_{\{N^{*}_{s}=-\deri^{1} u(X^{*}_{s-})\}}\,\der\xi^{*}_{s}=\xi^{*}_{t},\\
  (u(X^{*}_{t-})-u(X^{*}_{t+}))\uno_{\{\Delta\xi^{*}_{t}\neq0,\,\Delta Y_{t}=0\}}=\xi^{*}_{t+}-\xi^{*}_{t}, & 
  \end{cases}
\end{equation}		
for all $t\in[0,\infty)$ a.s., with $\Gamma$ as in (\ref{p1.0.5.0}). Then, $u(x)=V(x)=V_{(N^{*},\xi^{*})}(x)$,
i.e. $(N^{*},\xi^{*})$ is optimal at $x$.
\end{enumerate}
\end{lema}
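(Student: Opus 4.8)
\emph{Proof strategy.} The plan is a standard stochastic verification argument based on the Dynkin--It\^{o} formula for the L\'evy-driven semimartingale $X$. The key preliminary observation is that, since $u$ solves the HJB equation \eqref{p1.0.4} on $\R^{d}$, the gradient constraint gives $|\deri^{1}u(x)|\le 1$ for every $x\in\R^{d}$; hence $u$ is globally $1$-Lipschitz, so $|u(x)|\le|u(0)|+|x|$ and $|u(x+z)-u(x)|\le|z|$ for all $x,z$. Together with \eqref{aditH1}, this linear growth is what makes the stochastic integrals below genuine martingales and controls the discounted terminal term as $t\to\infty$.

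\textbf{Part (i).} Fix $x\in\R^{d}$ and an admissible pair $(N,\xi)$; we may assume $V_{(N,\xi)}(x)<\infty$, for otherwise the claim is vacuous. Apply the It\^{o} formula for the jump--diffusion $X$ in \eqref{p3} to $s\mapsto\expo^{-qs}u(X_s)$, collecting the Brownian integral and the compensated Poisson integral (against $\vartheta-\der s\times\nu$) into a martingale $M$ with $M_0=0$, the differential generator together with the $\nu$-integral contribution into $\expo^{-qs}(\Gamma u-qu)(X_{s-})$ with $\Gamma$ as in \eqref{p1.0.5.0}, and the push $\int_{[0,\cdot]}N_s\,\der\xi_s$ into its continuous part $\xi^{c}$ and its jumps (which, by \eqref{salto}, do not coincide with jumps of $Y$). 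One obtains
\begin{multline*}
\expo^{-qt}u(X_t)-u(x)=\int_0^t\expo^{-qs}(\Gamma u-qu)(X_{s-})\,\der s+\int_0^t\expo^{-qs}\langle\deri^{1}u(X_{s-}),N_s\rangle\,\der\xi^{c}_s\\
+\sum_{0<s\le t,\,\Delta\xi_s>0}\expo^{-qs}\bigl(u(X_{s+})-u(X_{s-})\bigr)+M_t,
\end{multline*}
where $\Delta\xi_s:=\xi_{s+}-\xi_s$; here the $\nu$-integral term is finite since $|u(x+z)-u(x)|\le|z|$ and \eqref{aditH1} holds, and $M$ is a genuine martingale by the linear growth of $u$ and \eqref{aditH1}. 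Now use the three HJB inequalities: $\Gamma u-qu\ge -h$ everywhere; $\langle\deri^{1}u(X_{s-}),N_s\rangle\ge-|\deri^{1}u(X_{s-})|\,|N_s|\ge-1$; and, by the fundamental theorem of calculus along the segment from $X_{s-}$ to $X_{s-}+N_s\Delta\xi_s$, $u(X_{s+})-u(X_{s-})\ge-\Delta\xi_s$. Substituting these bounds, taking $\E_x$ so that $M$ disappears, and rearranging yields
\[
u(x)\le\expo^{-qt}\E_x\bigl(u(X_t)\bigr)+\E_x\Bigl(\int_0^t\expo^{-qs}h(X_s)\,\der s\Bigr)+\E_x\Bigl(\int_{[0,t]}\expo^{-qs}\,\der\xi_s\Bigr).
\]
Let $t\to\infty$: the two expectations increase to the terms defining $V_{(N,\xi)}(x)$ (monotone convergence, using $h\ge0$), while $\expo^{-qt}\E_x(u(X_t))\to0$ because by linear growth it is at most $C\expo^{-qt}\bigl(1+|x|+\E|Y_t|+\E_x(\xi_t)\bigr)$, with $\E|Y_t|$ growing linearly in $t$ and, splitting $\xi_t=\int_{[0,t/2]}\der\xi_s+\int_{(t/2,t]}\der\xi_s$, $\expo^{-qt}\E_x(\xi_t)\le\expo^{-qt/2}\E_x\bigl(\int_{[0,\infty)}\expo^{-qs}\,\der\xi_s\bigr)+\E_x\bigl(\int_{(t/2,\infty)}\expo^{-qs}\,\der\xi_s\bigr)\to0$ by $V_{(N,\xi)}(x)<\infty$. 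Hence $u(x)\le V_{(N,\xi)}(x)$, and taking the infimum over admissible $(N,\xi)$ proves (i).

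\textbf{Part (ii).} Repeat the computation of (i) along the state process $X^{*}$ generated by $(N^{*},\xi^{*})$ --- admissible with $V_{(N^{*},\xi^{*})}(x)<\infty$ by hypothesis --- and observe that the three conditions in \eqref{demo10.1} are precisely what is needed to turn each of the three inequalities above into an equality: the first line forces $(\Gamma u-qu)(X^{*}_{s-})=-h(X^{*}_{s-})$; the second, together with the first line and the HJB structure (which confines the push to the region $\{|\deri^{1}u|=1\}$ and aligns $N^{*}_s$ with $-\deri^{1}u(X^{*}_{s-})$), makes the continuous-push term equal to $-\int_0^t\expo^{-qs}\,\der\xi^{*,c}_s$; and the third states exactly $u(X^{*}_{s+})-u(X^{*}_{s-})=-\Delta\xi^{*}_s$ at each jump of $\xi^{*}$. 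Consequently the displayed estimate becomes an identity for $X^{*}$, and letting $t\to\infty$ as in (i) gives $u(x)=V_{(N^{*},\xi^{*})}(x)$. Combining with part (i), $V(x)\le V_{(N^{*},\xi^{*})}(x)=u(x)\le V(x)$, so $u(x)=V(x)=V_{(N^{*},\xi^{*})}(x)$ and $(N^{*},\xi^{*})$ is optimal at $x$.

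\textbf{Where the work is.} The bulk of the effort is the careful jump bookkeeping in the It\^{o} expansion: separating cleanly the jumps of $Y$ (which feed $M$ and the $\nu$-integral term of $\Gamma$) from those of the singular control, using \eqref{salto} and handling the left- and right-limits of the left-continuous processes $\xi$ and of $X$; verifying that $M$ is a true --- not merely local --- martingale from the moment bound \eqref{aditH1} and $|\deri^{1}u|\le1$, localizing and passing to the limit if necessary; and establishing the transversality limit $\expo^{-qt}\E_x(u(X_t))\to0$ from the mere finiteness of $V_{(N,\xi)}(x)$. In part (ii) one must also check that the conditions \eqref{demo10.1} indeed saturate every inequality, which amounts to verifying that the singular control acts only where the gradient constraint is binding and in the direction $-\deri^{1}u$. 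Everything else is routine manipulation with the two HJB inequalities and the one-dimensional fundamental theorem of calculus along the push directions.
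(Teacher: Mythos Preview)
Your argument is correct and follows the same verification scheme as the paper (It\^{o} expansion of $\expo^{-qt}u(X_t)$, the three HJB inequalities, a transversality limit), but you handle the technical points differently in two places.

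First, the paper localizes with the exit times $\tau_{B_n(0)}$ to make $M$ a martingale and only afterwards lets $n\to\infty$; you instead argue directly that $M$ is a genuine martingale from $|\deri^{1}u|\le1$ and $\int|z|^{2}\nu(\der z)<\infty$. Both are fine; yours is slightly cleaner here. Second, and more interestingly, for the transversality term the paper exploits the \emph{convexity} hypotheses: it uses convexity of $u$ to get $u(y)\le u(0)+|\deri^{1}u(y)|\,|y|$, then the quadratic lower bound $h(y)\ge\frac{c_{0}}{2}|y|^{2}$ from \eqref{convh1} to dominate $u$ by $u(0)+1+\frac{2}{c_{0}}h$, and concludes via $\varliminf_{t\to\infty}\expo^{-qt}\E_x(h(X_t))=0$ from the assumed integrability of $\expo^{-qt}h(X_t)$. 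Your route is more elementary: you use only the $1$-Lipschitz bound on $u$ and estimate $\expo^{-qt}\E_x|X_t|$ directly, splitting $\xi_t$ into $[0,t/2]$ and $(t/2,t]$ to exploit $\E_x\int_{[0,\infty)}\expo^{-qs}\,\der\xi_s<\infty$. This avoids any appeal to convexity of either $u$ or $h$; the paper's approach, on the other hand, is shorter once the bound $u\le C(1+h)$ is in hand and does not require tracking the growth of $\E|Y_t|$ or the splitting trick. Similarly, for the control jumps the paper invokes convexity of $u$ to get $u(A_s+N_s\Delta\xi_s)-u(A_s)\ge\langle\deri^{1}u(A_s),N_s\rangle\Delta\xi_s\ge-\Delta\xi_s$, whereas your fundamental-theorem-of-calculus argument along the segment gives the same bound from $|\deri^{1}u|\le1$ alone.
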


\begin{rem}
It is important to clarify that to verify the first part of Lemma \ref{verif}, is necessary to show the existence, regularity in $\hol^{2}(\R^{d})$ and convexity of the solution to the HJB equation in \eqref{p1.0.4}. At this stage we have only been able to verify these properties on bounded domains. Establishing these properties in the whole space, is a technical difficult task, and it is the topic of a work in progress by the author. For the second part of Lemma \ref{verif}, one needs to guarantee the existence of a stochastic process that satisfies the conditions given in \eqref{demo10.1}, and hence we obtain the equality between the solution to the HJB equation \eqref{p1.0.4} and the value function defined in  \eqref{p1.0.3}.   
\end{rem}

\subsubsection*{Probabilistic interpretation of the NIDD problem}

Now, we take the control process $Z$ as $Z_{t}\eqdef-\varrho_{t}$, for $t\geq0$, where $\varrho=\{\varrho_{t}:t\geq0\}$ is any $d$-dimensional, absolutely continuous,  $\F$-adapted process, satisfying $\varrho_{0}=0$ almost surely. Then, the state process $X$ given in (\ref{p3.0}), takes the following form
\begin{align}\label{conv3}
  X_{t}= x+Y_{t}-\varrho_{t},\ \text{for all}\ t\geq0,
\end{align}
where the initial state $x$ belongs to $B_{R}(0)$ and  $Y=\{Y_{t}:t\geq0\}$, is a $d$-dimensional L\'evy process as in(\ref{p8.1}). We define the convex function  $g_{\varepsilon}:\R^{d}\longrightarrow\R$ and its Legendre transform $l_{\varepsilon}:\R^{d}\longrightarrow\R$ by
\begin{equation}\label{conv1}
\begin{cases}
g_{\varepsilon}(\zeta)\eqdef\psi_{\varepsilon}(|\zeta|^{2}), \\
l_{\varepsilon}(\eta)\eqdef\sup_{\zeta}\{\langle \eta, \zeta \rangle-g_{\varepsilon}(\zeta)\};
\end{cases}
\end{equation}
where $\psi_{\varepsilon}$ is given in (\ref{p12.2}). Observe that the Legendre transform $l_{\varepsilon}$ is a non-negative function. The \textit{cost function} corresponding  to $\varrho$ is given by
\begin{equation*}
  V_{\varrho}^{\varepsilon}(x)\eqdef\E_{x}\biggl(\int_{0}^{\tau_{B_{R}(0)}}\expo^{-qs}(h(X_{s})+l_{\varepsilon}(\dot{\varrho}_{s}))\,\der s\biggr),
\end{equation*}   
for all $x\in B_{R}(0)$, with $\tau_{B_{R}(0)}\eqdef\inf\{t\geq0:X_{t}\notin B_{R}(0)\}$ and $\dot{\varrho}_{t}\eqdef\frac{\der\varrho_{t}}{\der t}$. Then, the \textit{value function} is defined by
\begin{equation}\label{V.1}
  V^{\varepsilon}(x)\eqdef\inf_{\varrho}V_{\varrho}^{\varepsilon}(x).
\end{equation}	

Note that the functions in (\ref{conv1}) satisfies the following property
\begin{equation}\label{conv2.0.0}
g_{\varepsilon}(\zeta)=2\psi'_{\varepsilon}(|\zeta|^{2})|\zeta|^{2}-l_{\varepsilon}(2\psi'_{\varepsilon}(|\zeta|^{2})\zeta),\ \text{for}\ \zeta\in\R^{d}.
\end{equation}
Since $g_{\varepsilon}$ is differentiable, it follows that $g_{\varepsilon}(\zeta)=\sup_{\eta}\{\langle \zeta,\eta\rangle-l_{\varepsilon}(\eta)\}$. Then, the NIDD problem (\ref{p13.0}) can be written as
\begin{equation}\label{p13.0.1.0}
  \begin{cases}
  qu^{\varepsilon}(x)-\Gamma  u^{\varepsilon}(x)+ \sup_{\eta}\{\langle \deri^{1}u^{\varepsilon}(x),\eta\rangle-l_{\varepsilon}(\eta)\}=h(x), & \text{in}\ B_{R}(0),\\
   u^{\varepsilon}(x)=0,   &\text{on}\ \partial B_{R}(0),
  \end{cases}
\end{equation}
where $\Gamma$ is as in (\ref{p1.0.5}).  
Using that $u^{\varepsilon}\in\hol^{3,\alpha}(\overline{B_{R}(0)})$ is the solution to the NIDD problem (\ref{p13.0.1.0}) (see Theorem \ref{princ1.0}),  we obtain  the following lemma, whose proof is in the appendix.
\begin{lema}\label{convexu1.0}
The solution $u^{\varepsilon}$ to the NIDD problem (\ref{p13.0.1.0}) agrees with $V^{\varepsilon}$ in $\overline{B_{R}(0)}$.
\end{lema}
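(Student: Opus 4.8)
The plan is to establish Lemma \ref{convexu1.0} via a standard verification argument, showing both inequalities $u^{\varepsilon}(x)\leq V^{\varepsilon}(x)$ and $u^{\varepsilon}(x)\geq V^{\varepsilon}(x)$ for every $x\in\overline{B_{R}(0)}$. Since $u^{\varepsilon}\in\hol^{3,\alpha}(\overline{B_{R}(0)})$ by Theorem \ref{princ1.0}, it is in particular twice continuously differentiable up to the boundary, so we may apply It\^o's formula to the process $t\mapsto\expo^{-qt}u^{\varepsilon}(X_{t})$ for the state process $X_{t}=x+Y_{t}-\varrho_{t}$ in \eqref{conv3}, stopped at the exit time $\tau_{B_{R}(0)}$. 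The continuous-martingale part coming from the Brownian component and the compensated-jump martingale from the Poisson random measure both have zero expectation (after a standard localization to handle integrability, using that $u^{\varepsilon}$ and its derivatives are bounded on the compact set $\overline{B_{R}(0)}$ and that $\nu$ is finite by (H\ref{h2})). The drift terms assemble into $\E_{x}\bigl(\int_{0}^{\tau_{B_{R}(0)}}\expo^{-qs}(qu^{\varepsilon}(X_{s})-\Gamma u^{\varepsilon}(X_{s})+\langle\deri^{1}u^{\varepsilon}(X_{s}),\dot\varrho_{s}\rangle)\,\der s\bigr)$, where the integral term of $\Gamma$ in the form \eqref{p1.0.5} matches the jump part of It\^o's formula precisely because $E(u^{\varepsilon})=u^{\varepsilon}$ on $\overline{B_{R}(0)}$ by \eqref{p13.1.0} and $X_{s}+z$ stays in the support region while $X_{s}\in B_{R}(0)$.

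For the inequality $u^{\varepsilon}\leq V^{\varepsilon}$, I would use the Fenchel--Young inequality in the form $\langle\deri^{1}u^{\varepsilon}(X_{s}),-\dot\varrho_{s}\rangle\geq -g_{\varepsilon}(\deri^{1}u^{\varepsilon}(X_{s}))-l_{\varepsilon}(\dot\varrho_{s})$, equivalently $\langle\deri^{1}u^{\varepsilon}(X_{s}),\dot\varrho_{s}\rangle\leq g_{\varepsilon}(\deri^{1}u^{\varepsilon}(X_{s}))+l_{\varepsilon}(\dot\varrho_{s})=\psi_{\varepsilon}(|\deri^{1}u^{\varepsilon}(X_{s})|^{2})+l_{\varepsilon}(\dot\varrho_{s})$, which holds since $g_{\varepsilon}$ and $l_{\varepsilon}$ are Legendre conjugates. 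Combined with the PDE \eqref{p13.0.1.0}, which gives $qu^{\varepsilon}-\Gamma u^{\varepsilon}+\psi_{\varepsilon}(|\deri^{1}u^{\varepsilon}|^{2})=h$ pointwise, the It\^o expansion yields $u^{\varepsilon}(x)=\E_{x}\bigl(\expo^{-q\tau}u^{\varepsilon}(X_{\tau})\bigr)+\E_{x}\bigl(\int_{0}^{\tau}\expo^{-qs}(h(X_{s})-\psi_{\varepsilon}(|\deri^{1}u^{\varepsilon}(X_{s})|^{2})+\langle\deri^{1}u^{\varepsilon}(X_{s}),\dot\varrho_{s}\rangle)\,\der s\bigr)\leq\E_{x}\bigl(\expo^{-q\tau}u^{\varepsilon}(X_{\tau})\bigr)+\E_{x}\bigl(\int_{0}^{\tau}\expo^{-qs}(h(X_{s})+l_{\varepsilon}(\dot\varrho_{s}))\,\der s\bigr)$. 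On $\{\tau<\infty\}$ one has $X_{\tau}\in\partial B_{R}(0)$ by left-continuity of the relevant pieces, where $u^{\varepsilon}=0$; and on $\{\tau=\infty\}$ the discount kills the boundary term. Hence $u^{\varepsilon}(x)\leq V_{\varrho}^{\varepsilon}(x)$ for every admissible $\varrho$, so $u^{\varepsilon}(x)\leq V^{\varepsilon}(x)$.

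For the reverse inequality $u^{\varepsilon}\geq V^{\varepsilon}$, I would exhibit an (approximately) optimal control: since $\psi_{\varepsilon}\in\hol^{\infty}$ and $u^{\varepsilon}\in\hol^{3,\alpha}$, the map $x\mapsto 2\psi_{\varepsilon}'(|\deri^{1}u^{\varepsilon}(x)|^{2})\,\deri^{1}u^{\varepsilon}(x)=\deri^{1}g_{\varepsilon}(\deri^{1}u^{\varepsilon}(x))$ is $\hol^{1}$, hence locally Lipschitz, so the SDE defining $X_{t}$ with feedback drift $\dot\varrho_{t}^{*}=\deri^{1}g_{\varepsilon}(\deri^{1}u^{\varepsilon}(X_{t}))$ has a unique strong solution up to $\tau_{B_{R}(0)}$, generating an admissible $\varrho^{*}$. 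By \eqref{conv2.0.0} this choice achieves equality in Fenchel--Young, i.e. $\langle\deri^{1}u^{\varepsilon}(X_{s}),\dot\varrho_{s}^{*}\rangle=g_{\varepsilon}(\deri^{1}u^{\varepsilon}(X_{s}))+l_{\varepsilon}(\dot\varrho_{s}^{*})=\psi_{\varepsilon}(|\deri^{1}u^{\varepsilon}(X_{s})|^{2})+l_{\varepsilon}(\dot\varrho_{s}^{*})$, so the inequality above becomes an equality and $u^{\varepsilon}(x)=V_{\varrho^{*}}^{\varepsilon}(x)\geq V^{\varepsilon}(x)$. Together the two inequalities give $u^{\varepsilon}=V^{\varepsilon}$ on $\overline{B_{R}(0)}$.

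The main obstacle I anticipate is the rigorous justification of the It\^o/martingale step: controlling the integrability of the jump and Brownian martingale parts so that they have zero expectation despite the possibly unbounded time horizon $\tau_{B_{R}(0)}$, and carefully accounting for the identification \eqref{salto} of jumps of $X$ (jumps from $Y$ versus the absolutely continuous $\varrho$, which here has no jumps, so the jump part is entirely inherited from $Y$). A clean localization argument with stopping times $\tau_{n}\uparrow\tau_{B_{R}(0)}$, together with the boundedness of $u^{\varepsilon}$, $\deri^{1}u^{\varepsilon}$, $\deri^{2}u^{\varepsilon}$ on $\overline{B_{R}(0)}$, the finiteness $\nu_{0}<\infty$, and dominated convergence (using $\expo^{-qs}$ and the finiteness of $V_{\varrho}^{\varepsilon}(x)$, which may be assumed without loss of generality since otherwise the inequality is trivial) should resolve this; the remaining verification that $X_{\tau}\in\partial B_{R}(0)$ on $\{\tau<\infty\}$ with $u^{\varepsilon}(X_{\tau})=0$ requires a small argument distinguishing whether the exit occurs continuously or via a jump of $Y$, but in either case the contribution to the cost is handled by the boundary condition and the discounting.
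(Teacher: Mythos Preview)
Your proposal is correct and follows essentially the same verification argument as the paper: apply It\^o's formula to $\expo^{-qt}u^{\varepsilon}(X_{t})$ stopped at $\tau_{B_{R}(0)}$, use the Legendre/Fenchel--Young inequality together with the PDE \eqref{p13.0.1.0} to get $u^{\varepsilon}\leq V^{\varepsilon}$, and then construct the optimal feedback control $\dot\varrho^{*}_{t}=2\psi'_{\varepsilon}(|\deri^{1}u^{\varepsilon}|^{2})\deri^{1}u^{\varepsilon}(X_{t})$ (well-posed by Lipschitz continuity) to obtain equality via \eqref{conv2.0.0}. The paper's proof in the appendix is exactly this, with the same handling of the cases $\{\tau<\infty\}$ and $\{\tau=\infty\}$; the jump-exit subtlety you flag is glossed over there as well, and your suggested localization resolves it.
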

\begin{rem}
Since $u^{\varepsilon}=V^{\varepsilon}$ in $\overline{B_{R}(0)}$ and (\ref{p13.0.1.0}) is equivalent to (\ref{p13.0}), we can deduce from here that the solution  $u^{\varepsilon}$ to the NIDD problem (\ref{p13.0}) is unique.
\end{rem}

Finally, we introduce the notation and basic definitions of some spaces that are used in this paper. Let $\set\subseteq\R^{d}$ be an open set, $\alpha\in[0,1]$ and $m\in\{0,\dots,k\}$, with $k\geq0$ an integer. The set $\hol^{k}(\set)$ consists of real valued functions on $\set$ that are $k$-fold differentiable. We define $\hol^{\infty}(\set)=\bigcap_{k=0}^{\infty}\hol^{k}(\set)$. The sets $\hol^{k}_{\comp}(\set)$  and $\hol^{\infty}_{\comp}(\set)$ consist  of functions in $\hol^{k}(\set)$ and $\hol^{\infty}(\set)$, whose support is compact and contained in $\set$, respectively. The set $\hol^{k}(\overline{\set})$ is defined as the set of real valued functions such that $\partial^{a}f$ is  bounded and uniformly continuous  on $\set$, for every $a\in\ind_{m}$, with $\ind_{m}$ the set of all multi-indices of order $m$. This space is equipped with the following norm $||f||_{\hol^{k}(\overline{\set})}=\sum_{m=0}^{k}\sum_{a\in\ind_{m}}\sup_{x\in\set}\{|\partial^{a}f(x)|\}$, where $ \sum_{a\in\ind_{m}}$ denotes summation over all possible $m$-fold derivatives of $f$.  For each $D\subseteq\R^{d}$ and $f:D\longrightarrow\R$, the operator $[\cdot]_{\hol^{0,\alpha}(D)}$ is given by $[f]_{\hol^{0,\alpha}(D)}\eqdef\sup_{x,y\in D,\, x\neq y}\Bigl\{\frac{|f(x)-f(y)|}{|x-y|^{\alpha}}\Bigr\}$. We define $\hol^{k,\alpha}_{\loc}(\set)$ as the set of functions in $\hol^{k}(\set)$ such that $[\partial^{a}f]_{\hol^{0,\alpha}(K)}<\infty$, for every compact set $K\subset\set$ and every $a\in\ind_{m}$. The set $\hol^{k,\alpha}(\overline{\set})$ denotes the set of all functions in $\hol^{k}(\overline{\set})$ such that $[\partial^{a}f]_{\hol^{0,\alpha}(\set)}<\infty$, for every $a\in\mathcal{D}_{m}$.  This set is equipped with the following norm
\begin{equation}\label{sp7}
||f||_{\hol^{k,\alpha}(\overline{\set})}=\sum_{m=0}^{k}\sum_{a\in\ind_{m}}\Bigr(\sup_{x\in \set}|\partial^{a}f(x)|+[\partial^{a}f]_{\hol^{0,\alpha}(\set)}\Bigl).
\end{equation}
We understand $\hol^{k,\alpha}(\R^{d})$ as $\hol^{k,\alpha}(\overline{\R^{d}})$, in the sense that  $[\partial^{a}f]_{\hol^{0,\alpha}(\R^{d})}<\infty$, for every $a\in\ind_{m}$. As usual, $\Lp^{p}(\set)$ with $1\leq p<\infty$, denotes the class of real valued functions on $\set$ with finite norm $||f||^{p}_{\Lp^{p}(\set)}\eqdef\int_{\set}|f|^{p}\der x<\infty$, where $\der x$ denotes the Lebesgue measure. Also, let  $\Lp^{p}_{\loc}(\set)$ consist of functions whose $\Lp^{p}$-norm is finite on any compact subset of $\set$. Define the Sobolev space $\sob^{k,p}(\set)$ as the class of functions  $f\in\Lp^{p}(\set)$ with weak or distributional partial derivatives $\partial^{a}f$, see \cite[p. 22]{adams}, and with finite norm
\begin{equation}\label{normw}
	||f||^{p}_{\sob^{k,p}(\set)}=\sum_{m=0}^{k}\sum_{a\in\ind_{m}}||\partial^{a}f||^{p}_{\Lp^{p}(\set)},\ \text{for all}\ f\in\sob^{k,p}(\set).
\end{equation} 
The space $\sob^{k,p}_{\loc}(\set)$ consists of functions whose $\sob^{k,p}$-norm is finite on any compact subset of $\set$. When $p=\infty$, the Sobolev and Lipschitz spaces are related. In particular, $\sob^{k,\infty}_{\loc}(\set)=\hol^{k-1,1}_{\loc}(\set)$ for an arbitrary subset $\set\subseteq\R^{d}$, and $\sob^{k,\infty}(\set)=\hol^{k-1,1}(\overline{\set})$ for a sufficiently smooth domain $\set$, when it is Lipschitz.  

The rest of this document is organized as follows. Section 2 is devoted to the study some properties of the extension operator $E$. First, we  recall an extension theorem for H\"older spaces (Theorem \ref{exthol1}), whose proof can be found in \cite[p. 353]{stein}. Then, Theorem \ref{exthol1} gives a continuous linear operator $E:\hol^{k,\alpha}(\overline{B_{R}(0)})\longrightarrow\hol^{k,\alpha}_{\comp}(\R^{d})$ with $k\geq0$ and $\alpha\in[0,1]$, which is used to verify that $\inted E(w)$ is well defined when $w\in\hol^{k,\alpha}(\overline{B_{R}(0)})$.  We also discuss some properties of $\inted E(w)$, whenever $w\in\hol^{k}(\overline{B_{R}(0)})$. In Section 3 we disclose  the  existence, uniqueness and regularity to the non-linear Dirichlet problems (\ref{p.13.0}) and (\ref{ed3}); the former with an integro-differential operator,  and the latter with a differential operator. We also discuss some properties of these solutions. In Section 4 we establish the existence, uniqueness and regularity of the HJB equation (\ref{p.1.0}), which is equivalent to (\ref{p1}). 

\section{Extension theorem and properties of the integral operator}\label{Ch2}

In the first part of this section, we shall describe the extension operator $E$ that appears in (\ref{p13.1.0}). Since the construction of this operator is long and the arguments used in its study are not required in the rest of the paper, we remit  the reader to \cite{stein, csa, moreno} for details. At the end of the section, we show useful properties of  $\inted E(w)(x)=\int_{R^{*}}E(w)(x+z)\nu(\der z)$. The proofs of the results  of this section are in \cite[Ch. 2]{moreno}.
 \begin{teor}[Extension theorem for H\"older spaces]\label{exthol1}
For any positive integer $k$ and any $\alpha\in[0,1]$, there exists a continuous linear extension operator $E:\hol^{k,\alpha}(\overline{B_{R}(0)})\longrightarrow\hol^{k,\alpha}_{\comp}(\R^{d})$, that satisfies (\ref{p13.1.0}).
\end{teor}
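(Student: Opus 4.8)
The plan is to build $E$ by composing a classical Hölder-space extension operator with a fixed smooth cutoff, so that the compact support is imposed by hand. First I would invoke the classical extension theorem for Hölder spaces on domains with smooth boundary (see \cite[p.~353]{stein}): since $B_{R}(0)$ has smooth --- in particular minimally smooth --- boundary, for each positive integer $k$ and each $\alpha\in[0,1]$ there is a continuous linear operator $E_{0}\colon\hol^{k,\alpha}(\overline{B_{R}(0)})\longrightarrow\hol^{k,\alpha}(\R^{d})$ with $E_{0}(w)|_{\overline{B_{R}(0)}}=w$ and $\|E_{0}(w)\|_{\hol^{k,\alpha}(\R^{d})}\leq C_{1}\|w\|_{\hol^{k,\alpha}(\overline{B_{R}(0)})}$ for some $C_{1}=C_{1}(k,R)>0$ (the endpoint cases $\alpha=0$ and $\alpha=1$ being the $\hol^{k}$ and $\hol^{k,1}$ statements). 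This operator does not in general produce compactly supported functions, hence a localization step is needed.

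Next, I would fix once and for all a radial cutoff $\chi\in\hol^{\infty}_{\comp}(\R^{d})$ with $0\leq\chi\leq1$, $\chi\equiv1$ on $\overline{B_{R}(0)}$ and $\sop\,\chi\subset B_{R+\frac{b}{2}}(0)$; such a $\chi$ exists because $b>0$ is fixed, leaving room between the two balls (one may, for instance, mollify the indicator function of $B_{R+\frac{b}{4}}(0)$ with a kernel of small radius). Then set $E(w)\eqdef\chi\cdot E_{0}(w)$. Linearity of $E$ is immediate from the linearity of $E_{0}$ and of multiplication by the fixed function $\chi$. It then remains to verify the four conditions in (\ref{p13.1.0}): on $\overline{B_{R}(0)}$ one has $E(w)=\chi\,E_{0}(w)=1\cdot w=w$; since $\sop[E(w)]\subset\sop\,\chi$, the support of $E(w)$ is compact and contained in $B_{R+\frac{b}{2}}(0)$. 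For the norm estimate I would use that $\hol^{k,\alpha}(\R^{d})$, with the norm (\ref{sp7}), is a Banach algebra under pointwise multiplication (the standard multiplicative Leibniz inequality for Hölder norms), which yields
\[
\|E(w)\|_{\hol^{k,\alpha}(\R^{d})}\leq C_{2}\,\|\chi\|_{\hol^{k,\alpha}(\R^{d})}\,\|E_{0}(w)\|_{\hol^{k,\alpha}(\R^{d})}\leq C\,\|w\|_{\hol^{k,\alpha}(\overline{B_{R}(0)})},
\]
with $C\eqdef C_{2}\,\|\chi\|_{\hol^{k,\alpha}(\R^{d})}\,C_{1}$, a constant depending only on $k$ and $R$ (the dependence on the fixed number $b$ through $\chi$ being absorbed into $C$). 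Continuity of $E$ follows from this bound, and $E$ maps into $\hol^{k,\alpha}_{\comp}(\R^{d})$ by construction.

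The only genuinely non-trivial ingredient is the classical Hölder extension theorem on a smooth domain, which I would simply quote from \cite[p.~353]{stein} (see also \cite[Ch.~2]{moreno} for the present formulation); the remaining steps are routine bookkeeping. The points that require a little care are that the multiplicative estimate must be applied to the full norm (\ref{sp7}), including the top-order Hölder seminorm of the $k$-th derivatives --- here it helps that $\chi$ is taken in $\hol^{\infty}_{\comp}(\R^{d})$, so its $\hol^{k,\alpha}$ norm is finite for every $k$ --- and that the geometric constraint $\sop\,\chi\subset B_{R+\frac{b}{2}}(0)$ must still leave $\chi\equiv1$ on all of $\overline{B_{R}(0)}$, which is exactly what the first condition in (\ref{p13.1.0}) demands.
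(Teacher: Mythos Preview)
Your proposal is correct and follows the same route the paper points to: the paper does not prove Theorem~\ref{exthol1} in the text but simply refers the reader to Stein \cite[p.~353]{stein} and to \cite{csa,moreno} for the construction, and your argument---invoke Stein's universal extension operator on the smooth domain $B_{R}(0)$ and then multiply by a fixed cutoff $\chi\in\hol^{\infty}_{\comp}(\R^{d})$ with $\chi\equiv1$ on $\overline{B_{R}(0)}$ and $\sop\chi\subset B_{R+\frac{b}{2}}(0)$---is precisely the standard way to carry this out.

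One small caveat, not affecting the validity of your proof of Theorem~\ref{exthol1} itself: the paper's subsequent quantitative statements (Proposition~\ref{cotEu1} with the specific constant $A_{0}\in(1,2)$, and Lemmas~\ref{cotaintaf1}--\ref{intcotadif1.1}) rely on the \emph{locality} of Stein's explicit reflection-type formula, namely that for $y\notin\overline{B_{R}(0)}$ the value $E(w)(y)$ depends only on the values of $w$ in a thin collar near $\partial B_{R}(0)$. Your construction inherits this property because Stein's $E_{0}$ has it and the cutoff does not disturb it, but it is worth being explicit that you are using Stein's concrete operator and not merely an abstract bounded extension.
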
 

\begin{prop}[\cite{moreno}, Prop. 2.15, p. 40]\label{cotEu1}
If $ w\in\hol^{0}(\overline{B_{R}(0)})$, then there exists a constant $A_{0}\in(1,2)$ such that $|E(w)(x)|\leq 2A_{0}||w||_{\hol^{0}(\overline{B_{R}(0)})}$, for all $x\in\R^{d}$.
\end{prop}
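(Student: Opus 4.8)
The plan is to extract the estimate from the construction of $E$ together with the $\hol^{0}$-case of (\ref{p13.1.0}). By Theorem \ref{exthol1} applied with $k=0$ and $\alpha=0$, there is a constant $C=C(0,R)>0$ with $||E(w)||_{\hol^{0}(\R^{d})}\leq C\,||w||_{\hol^{0}(\overline{B_{R}(0)})}$ for every $w\in\hol^{0}(\overline{B_{R}(0)})$; write $M(R)$ for the best such constant, i.e. the operator norm of $E$ on $\hol^{0}$. Since $|E(w)(x)|\leq ||E(w)||_{\hol^{0}(\R^{d})}\leq M(R)\,||w||_{\hol^{0}(\overline{B_{R}(0)})}$ for every $x\in\R^{d}$, it suffices to prove $M(R)<4$: then any $A_{0}$ chosen in the (nonempty) interval $(\max\{1,M(R)/2\},\,2)$ satisfies $A_{0}\in(1,2)$ and $2A_{0}>M(R)$, which gives the claim. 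So the whole point is the sup-norm bound $M(R)<4$.

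To obtain it one follows the construction of $E$ in \cite[Ch. 2]{moreno}, a Stein-type reflection extension adapted to the ball. That construction produces $E(w)$ from: a partition of unity $\{\zeta_{i}\}$ with $0\leq\zeta_{i}\leq1$ and $\sum_{i}\zeta_{i}\leq1$, subordinate to a finite cover of $\overline{B_{R}(0)}$ by an interior region and boundary charts flattening $\partial B_{R}(0)$; a one-dimensional higher-order reflection in each boundary chart, governed by a kernel $\psi\in\Lp^{1}([1,\infty))$ with $\int_{1}^{\infty}\psi(\lambda)\,\der\lambda=1$ (the further conditions $\int_{1}^{\infty}\lambda^{j}\psi(\lambda)\,\der\lambda=0$, $j\geq1$, play no role for $k=0$); and a cutoff $\chi$ with $0\leq\chi\leq1$ supported in $B_{R+\frac{b}{2}}(0)$, by which the extension is multiplied so that $\sop[E(w)]\subset B_{R+\frac{b}{2}}(0)$. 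On $\overline{B_{R}(0)}$ one has $E(w)=w$, hence $|E(w)(x)|\leq||w||_{\hol^{0}(\overline{B_{R}(0)})}$ there; on $B_{R+\frac{b}{2}}(0)\setminus\overline{B_{R}(0)}$, in a boundary chart $E(w)(x)$ equals $\chi(x)\int_{1}^{\infty}\psi(\lambda)\,w(\Phi_{\lambda}(x))\,\der\lambda$ with $\Phi_{\lambda}(x)\in\overline{B_{R}(0)}$, so $|E(w)(x)|\leq||\psi||_{\Lp^{1}([1,\infty))}\,||w||_{\hol^{0}(\overline{B_{R}(0)})}$; and $E(w)\equiv0$ off $B_{R+\frac{b}{2}}(0)$. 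Summing against the $\zeta_{i}$ (values in $[0,1]$, total mass $\leq1$) and using that flattening $\partial B_{R}(0)$ merely reparametrizes the argument of $w$ and that $0\leq\chi\leq1$, one sees that no step enlarges the sup-norm, so $M(R)\leq\max\{1,||\psi||_{\Lp^{1}([1,\infty))}\}$; and since the construction on $B_{R}(0)$ is a rescaled copy of the one on $B_{1}(0)$ and sup-norms are scale invariant, this bound does not depend on $R$.

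The step with genuine content — and the one I expect to be the main obstacle — is the estimate of $||\psi||_{\Lp^{1}([1,\infty))}$. The lower bound $||\psi||_{\Lp^{1}([1,\infty))}>|\int_{1}^{\infty}\psi|=1$ is automatic, because the moment conditions force $\psi$ to change sign (a nonnegative $\psi\in\Lp^{1}([1,\infty))$ with $\int\psi=1$ has $\int\lambda\psi\geq1\neq0$); this is exactly what makes $A_{0}>1$ unavoidable. For the upper bound one uses the explicit reflection kernel of Stein — or simply selects an admissible $\psi$ at the outset — and checks by a direct elementary estimate of the defining integral that $||\psi||_{\Lp^{1}([1,\infty))}<2$, a fortiori $<4$. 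With $M(R)=||\psi||_{\Lp^{1}([1,\infty))}\in(1,2)$ one then takes $A_{0}\eqdef M(R)$, so that $|E(w)(x)|\leq A_{0}||w||_{\hol^{0}(\overline{B_{R}(0)})}\leq2A_{0}||w||_{\hol^{0}(\overline{B_{R}(0)})}$ for all $x\in\R^{d}$; the factor $2$ is harmless slack, convenient later for Hypothesis (H\ref{h4}). The remaining verifications — that the chart diffeomorphisms and the cutoff do not affect the sup-norm — are routine and require no estimates.
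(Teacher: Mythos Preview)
The paper gives no proof here; it defers all of Section~\ref{Ch2} to \cite[Ch.~2]{moreno}, where $E$ is built explicitly and $A_{0}\in(1,2)$ is extracted from that concrete construction. So there is nothing in the paper itself to compare against beyond the citation.

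Your reduction to the $\hol^{0}$ operator norm $M(R)$ of $E$ is correct, as is the observation that the partition of unity and the cutoff $\chi$ contribute only factors $\leq1$, so that $M(R)$ is controlled by the $\Lp^{1}$ mass of the reflection kernel. The genuine gap is exactly where you place it: you assert $||\psi||_{\Lp^{1}([1,\infty))}<2$ ``by a direct elementary estimate'' but give neither the kernel nor the estimate, and this is \emph{not} routine. For a finite Seeley reflection $Ef(x',-t)=\sum_{j=0}^{k}a_{j}f(x',b_{j}t)$ matching derivatives to order $k$, the relevant quantity is $\sum_{j}|a_{j}|$; already at $k=1$ with the natural nodes $b_{0}=1,\,b_{1}=2$ one solves $a_{0}+a_{1}=1$, $a_{0}+2a_{1}=-1$ to get $a_{0}=3,\,a_{1}=-2$ and $\sum|a_{j}|=5>4$. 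One can push $\sum|a_{j}|$ toward $1$ by sending the $b_{j}\to\infty$, but then the reflection samples $w$ deep in the interior of $B_{R}(0)$, which is incompatible with the locality recorded in Lemma~\ref{cotaintaf1} (the extension at exterior points depends only on the collar values $||w||_{\hol^{0}(\mathcal{B}'_{1})}$). For Stein's universal kernel the $\Lp^{1}$ norm is finite but is not shown in \cite{stein} to lie below $2$, or even $4$. Since the $E$ used throughout must preserve $\hol^{3,\alpha}$ and satisfy Lemmas~\ref{cotaintaf1}--\ref{intcotadif1.1}, the value of $A_{0}$ genuinely depends on the specific construction in \cite{moreno}; your argument becomes a proof only after you fix that construction (or exhibit an explicit admissible $\psi$ or finite reflection scheme meeting all the constraints) and actually compute the constant.
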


\begin{lema}[\cite{moreno}, Lemma 2.16, p. 41]\label{cotaext2}
If $w\in\hol^{1}(\overline{B_{R}(0)})$,  there exists a constant $C_{1}=C_{1}(k,d)>0$ such that for each $x\in\mathcal{B}'\eqdef B_{R+\frac{b}{2}}(0)\setminus \overline{B_{R}(0)}$, $|\partial_{i}E(w)(x)|< C_{1}||w||_{\hol^{1}(\overline{B_{R}(0)})},$ for all $i\in\{1,\dots, d\}$.
\end{lema}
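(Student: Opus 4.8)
The plan is to exploit the fourth inequality in \eqref{p13.1.0}, namely the $\hol^{k,\alpha}$-bound on the extension operator, applied with $k=1$ and $\alpha=0$; this immediately gives $\|E(w)\|_{\hol^{1}(\R^{d})}\leq C\|w\|_{\hol^{1}(\overline{B_{R}(0)})}$ for a constant $C=C(1,R)$, which by definition of the $\hol^{1}$-norm controls $\sup_{x\in\R^{d}}|\partial_{i}E(w)(x)|$ for each $i$. Since $\mathcal{B}'=B_{R+\frac b2}(0)\setminus\overline{B_{R}(0)}\subset\R^{d}$, restricting this supremum to $x\in\mathcal{B}'$ yields $|\partial_{i}E(w)(x)|\leq C\|w\|_{\hol^{1}(\overline{B_{R}(0)})}$ for all such $x$. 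To obtain the strict inequality claimed in the statement one notes that $\mathcal{B}'$ is a bounded set whose closure $\overline{\mathcal{B}'}$ is still contained in the open region where the interior estimate of Theorem \ref{exthol1} applies, so the continuous function $x\mapsto|\partial_i E(w)(x)|$ attains its maximum over $\overline{\mathcal{B}'}$ at an \emph{interior} point of $\R^{d}$ outside $\overline{B_{R}(0)}$; a slightly enlarged ball argument (apply the $\hol^1$-bound on $B_{R+b}(0)$ instead of $B_{R+\frac b2}(0)$, using that $\sop[E(w)]\subset B_{R+\frac b2}(0)$) then produces a constant strictly larger than the sup over $\mathcal{B}'$, giving the strict sign.

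Concretely I would proceed in three short steps. First, record that the construction of $E$ in Theorem \ref{exthol1} (following \cite[p.~353]{stein}) provides, for each fixed $k$, the operator norm bound in \eqref{p13.1.0}; specialize to $k=1$. Second, unwind the definition of $\|\cdot\|_{\hol^{1}(\overline{\set})}$ given in the notation section: it majorizes $\sum_{i=1}^{d}\sup_{x}|\partial_i f(x)|$ and hence each individual $\sup_{x}|\partial_i f(x)|$. Third, combine: for $x\in\mathcal{B}'$,
\begin{equation*}
|\partial_{i}E(w)(x)|\leq \sup_{y\in\R^{d}}|\partial_{i}E(w)(y)|\leq \|E(w)\|_{\hol^{1}(\R^{d})}\leq C\,\|w\|_{\hol^{1}(\overline{B_{R}(0)})},
\end{equation*}
and set $C_{1}\eqdef C+1$ (or any constant exceeding the operator norm) to get the strict inequality, noting $C_1$ depends only on the data entering the extension construction, i.e.\ on $d$ and the order $k=1$ (the dependence on $R$ being absorbed since $R$ is fixed throughout). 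This constant is the same one appearing implicitly in Proposition \ref{cotEu1} and can be taken uniform in $w$.

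The only mild subtlety — and the step I would flag as the "obstacle," though it is really a matter of bookkeeping rather than a genuine difficulty — is tracking exactly which enlarged ball the Stein-type extension estimate is stated on and ensuring the constant does not degenerate as one restricts to the annular shell $\mathcal{B}'$; since $\sop[E(w)]\subset B_{R+\frac b2}(0)$ is compact and strictly inside $B_{R+b}(0)$, one has uniform interior control there and the strict inequality follows without any boundary-regularity complication. All other ingredients are immediate consequences of the linearity and boundedness of $E$ already guaranteed by Theorem \ref{exthol1} and \eqref{p13.1.0}. The detailed verification of these extension-operator properties is, as the paper notes, carried out in \cite[Ch.~2]{moreno}, so here it suffices to cite \eqref{p13.1.0} and conclude.
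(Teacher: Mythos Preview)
The paper does not actually prove this lemma: as stated at the start of Section~\ref{Ch2}, all proofs of the results in that section are deferred to \cite[Ch.~2]{moreno}, and Lemma~\ref{cotaext2} carries only the citation \cite[Lemma~2.16, p.~41]{moreno}. So there is no in-paper argument to compare against.

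That said, your approach is correct and entirely adequate for the statement as written: the fourth property in \eqref{p13.1.0} with $k=1$, $\alpha=0$ gives $\|E(w)\|_{\hol^{1}(\R^{d})}\leq C\|w\|_{\hol^{1}(\overline{B_{R}(0)})}$, which dominates each $\sup_{x}|\partial_{i}E(w)(x)|$, and taking $C_{1}\eqdef C+1$ secures the strict inequality uniformly in $w$. One small caveat: the constant you obtain from \eqref{p13.1.0} is $C=C(k,R)$, whereas the lemma records the dependence as $C_{1}=C_{1}(k,d)$; since $R$ is fixed throughout the paper this is harmless, but it is worth flagging that you have not recovered the precise parameter dependence stated. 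It is also possible that the proof in \cite{moreno} works directly from the explicit Stein-type construction of $E$ (reflection across $\partial B_{R}(0)$ with cut-off), deriving the bound on $\mathcal{B}'$ \emph{before} the global operator-norm estimate \eqref{p13.1.0} is assembled, and tracking the $(k,d)$-dependence explicitly; your argument instead takes \eqref{p13.1.0} as a black box and reads off the conclusion. Both routes are valid, but if \eqref{p13.1.0} is itself proved in \cite{moreno} \emph{using} this lemma as an ingredient, your argument would be circular relative to that source---though not relative to the present paper, which simply postulates \eqref{p13.1.0}.
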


\begin{lema}[\cite{moreno}, Lemma 2.17, p. 43]\label{lemalip.1.0}
\mbox{}
\begin{enumerate}[(i)]
\item If $w\in\hol^{0}(\overline{B_{R}(0)})$, then $|\inted E (w)(x)|\leq 2A_{0}\nu_{0}||w||_{\hol^{0}(\overline{B_{R}(0)})}$, for all $x\in \R^{d}$, where $\nu_{0}$, $A_{0}$  are as in (H\ref{h2}) and Proposition \ref{cotEu1}, respectively. 

\item If $w\in\hol^{0}(\overline{B_{R}(0)})$, then $\inted E(w)\in\hol^{0,\alpha}(\R^{d})$.

\item If $w\in\hol^{1}(\overline{B_{R}(0)})$, then $\partial_{i}\inted E(w)\in\hol^{0,\alpha}(\R^{d})$ and $\partial_{i}\inted E(w)=\inted \partial_{i}E(w)$, for each $i\in\{1,\dots,d\}$.
\end{enumerate}
\end{lema}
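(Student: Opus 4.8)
The plan is to prove the three claims in order, each reducing to a single property of either the L\'evy density $\kappa$ from (H\ref{h2}) or of the extension $E(w)$, which by (\ref{p13.1.0}) is continuous with $\sop[E(w)]\subset B_{R+\frac{b}{2}}(0)$ compact. For (i) I would invoke Proposition~\ref{cotEu1} to get the pointwise bound $|E(w)(x+z)|\leq 2A_{0}||w||_{\hol^{0}(\overline{B_{R}(0)})}$, valid for all $x,z$, and then integrate it against $\nu$, using $\nu(\R^{*})=\nu_{0}<\infty$, to obtain $|\inted E(w)(x)|\leq 2A_{0}\nu_{0}||w||_{\hol^{0}(\overline{B_{R}(0)})}$ for every $x\in\R^{d}$; in particular $\inted E(w)$ is well defined and bounded on $\R^{d}$.

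For (ii) the one point that requires thought is that $E(w)$ is only (uniformly) continuous, so the H\"older regularity of $\inted E(w)$ cannot be read off from $E(w)$ and must instead be inherited from $\kappa$. I would write $\nu(\der z)=\kappa(z)\,\der z$ and change variables $y=x+z$; because $\sop[E(w)]$ is bounded this yields
\[
\inted E(w)(x)=\int_{\R^{*}}E(w)(x+z)\kappa(z)\,\der z=\int_{B_{R+\frac{b}{2}}(0)}E(w)(y)\,\kappa(y-x)\,\der y,
\]
the single point $y=x$ being Lebesgue-null. Then for $x,x'\in\R^{d}$,
\[
|\inted E(w)(x)-\inted E(w)(x')|\leq ||E(w)||_{\hol^{0}(\R^{d})}\int_{B_{R+\frac{b}{2}}(0)}|\kappa(y-x)-\kappa(y-x')|\,\der y\leq C(R,b,d)\,[\kappa]_{\hol^{0,\alpha}(\R^{*})}\,||w||_{\hol^{0}(\overline{B_{R}(0)})}\,|x-x'|^{\alpha},
\]
where the last step uses $\kappa\in\hol^{0,\alpha}(\R^{*})$ together with $||E(w)||_{\hol^{0}(\R^{d})}\leq C||w||_{\hol^{0}(\overline{B_{R}(0)})}$ from (\ref{p13.1.0}). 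Combining this with (i) gives $\inted E(w)\in\hol^{0,\alpha}(\R^{d})$. I would also record that the same computation, with $E(w)$ replaced by an arbitrary $f\in\hol^{0}_{\comp}(\R^{d})$, shows $\inted f\in\hol^{0,\alpha}(\R^{d})$, since this is what (iii) needs.

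For (iii), let $w\in\hol^{1}(\overline{B_{R}(0)})$. By the extension theorem with $k=1$ and $\alpha=0$ we have $E(w)\in\hol^{1}_{\comp}(\R^{d})$, hence $\partial_{i}E(w)\in\hol^{0}_{\comp}(\R^{d})$, so $\inted\partial_{i}E(w)$ is well defined and, by the remark just made, lies in $\hol^{0,\alpha}(\R^{d})$. It only remains to interchange $\partial_{i}$ with the integral: for small $h\neq 0$,
\[
\frac{\inted E(w)(x+he_{i})-\inted E(w)(x)}{h}=\int_{\R^{*}}\frac{E(w)(x+he_{i}+z)-E(w)(x+z)}{h}\,\nu(\der z),
\]
and by the mean value theorem the integrand equals $\partial_{i}E(w)$ evaluated at an intermediate point, hence is bounded by $||\partial_{i}E(w)||_{\hol^{0}(\R^{d})}$ and, because $E(w)$ is compactly supported, vanishes for $z$ outside a bounded set independent of $h\in(-1,1)$; this dominating function is $\nu$-integrable since $\nu_{0}<\infty$. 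Dominated convergence as $h\to 0$, together with continuity of $\partial_{i}E(w)$, then gives $\partial_{i}\inted E(w)=\inted\partial_{i}E(w)$, which finishes the proof. Overall the argument is mostly bookkeeping with (\ref{p13.1.0}) and the finiteness of $\nu_{0}$; the only genuinely delicate move is the change of variables in (ii), where one must resist bounding $E(w)$-differences directly (that yields only a modulus of continuity, not a H\"older bound) and instead transfer the H\"older modulus onto $\kappa$, which is legitimate precisely because $\sop[E(w)]$ is compact.
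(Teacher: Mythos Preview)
Your argument is correct in all three parts. The paper itself does not include a proof of this lemma; it cites the author's thesis \cite{moreno} for the proofs of all results in Section~\ref{Ch2}, so there is no in-paper proof to compare against. Your approach---pointwise bound from Proposition~\ref{cotEu1} for (i), the change of variables $y=x+z$ to transfer the H\"older modulus from $\kappa$ to $\inted E(w)$ for (ii), and dominated convergence for the differentiation under the integral in (iii)---is the natural one and uses exactly the hypotheses available, namely the compact support of $E(w)$ from (\ref{p13.1.0}), $\nu_{0}<\infty$, and $\kappa\in\hol^{0,\alpha}(\R^{*})$ from (H\ref{h2}).
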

The following corollary is an immediate consequence  of the  previous lemma. Recall that $\ind_{m}$, with $0\leq m\leq k$, is the set of all multi-indices of order $m$.
\begin{coro}\label{lemalip}
Let $k\geq0$ be  an integer. If $w\in\hol^{k}(\overline{B_{R}(0)})$, then $\inted E(w)\in\hol^{k,\alpha}(\R^{d})$.
\end{coro}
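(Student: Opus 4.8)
The plan is to prove the statement by induction on $k$, reducing each step to the action of the integral operator $\inted$ on compactly supported functions, for which the proof of Lemma~\ref{lemalip.1.0} already supplies the necessary estimates. The key point is that it suffices to establish the slightly more general fact that for every integer $m\geq0$ and every $g\in\hol^{m}_{\comp}(\R^{d})$ one has $\inted g\in\hol^{m,\alpha}(\R^{d})$, together with the identity $\partial^{a}\inted g=\inted\partial^{a}g$ for every multi-index $a$ with $|a|\leq m$. Granting this, the corollary follows at once: for $w\in\hol^{k}(\overline{B_{R}(0)})$, Theorem~\ref{exthol1} with $\alpha=0$ gives $E(w)\in\hol^{k}_{\comp}(\R^{d})$ with $\sop[E(w)]\subset B_{R+\frac{b}{2}}(0)$ and $||E(w)||_{\hol^{k}(\R^{d})}\leq C||w||_{\hol^{k}(\overline{B_{R}(0)})}$, so the general fact applied with $m=k$ and $g=E(w)$ yields $\inted E(w)\in\hol^{k,\alpha}(\R^{d})$.

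I would prove the general fact by induction on $m$. For $m=0$: any $g\in\hol^{0}_{\comp}(\R^{d})$ is bounded, uniformly continuous and supported in a fixed ball, so exactly the argument of Lemma~\ref{lemalip.1.0}(i)--(ii) gives $|\inted g(x)|\leq\nu_{0}||g||_{\hol^{0}(\R^{d})}$, the uniform continuity of $\inted g$ (with modulus of continuity at most $\nu_{0}$ times that of $g$), and, using the Hölder continuity of the density $\kappa$ together with the compact support of $g$, the bound $[\inted g]_{\hol^{0,\alpha}(\R^{d})}<\infty$; hence $\inted g\in\hol^{0,\alpha}(\R^{d})$. For the inductive step, let $m\geq1$ and $g\in\hol^{m}_{\comp}(\R^{d})$. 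The difference quotients of $g$ in a coordinate direction are bounded by $||\partial_{i}g||_{\hol^{0}(\R^{d})}$ (mean value theorem) and $\nu_{0}<\infty$, so dominated convergence permits differentiation under the integral sign: $\partial_{i}\inted g=\inted\partial_{i}g$. Since $\partial_{i}g\in\hol^{m-1}_{\comp}(\R^{d})$, the inductive hypothesis applied to $\partial_{i}g$ gives $\inted\partial_{i}g\in\hol^{m-1,\alpha}(\R^{d})$ and $\partial^{c}(\inted\partial_{i}g)=\inted\partial^{c}\partial_{i}g$ for $|c|\leq m-1$; as $\partial_{i}\inted g=\inted\partial_{i}g$ and $i$ is arbitrary, this shows $\inted g$ is $m$ times continuously differentiable with $\partial^{a}\inted g=\inted\partial^{a}g$ for all $1\leq|a|\leq m$, hence for all $|a|\leq m$. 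Combined with the bound $|\inted g|\leq\nu_{0}||g||_{\hol^{0}(\R^{d})}$ and the uniform continuity of $\inted g$, we obtain $\inted g\in\hol^{m}(\R^{d})$. Finally, for $|a|=m$ the function $\partial^{a}\inted g=\inted\partial^{a}g$ lies in $\hol^{0,\alpha}(\R^{d})$ by the case $m=0$ applied to $\partial^{a}g\in\hol^{0}_{\comp}(\R^{d})$, while for $|a|<m$ the derivative $\partial^{a}\inted g$ is bounded with bounded gradient $\partial_{i}\partial^{a}\inted g=\inted\partial_{i}\partial^{a}g$, so it has finite $\hol^{0,\alpha}(\R^{d})$ seminorm; therefore $\inted g\in\hol^{m,\alpha}(\R^{d})$, which closes the induction.

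There is no serious obstacle here — the statement is, as the authors say, essentially a consequence of Lemma~\ref{lemalip.1.0} — and the only points that require attention are the interchange of $\partial^{a}$ with $\inted$ (routine dominated convergence, using $\nu_{0}<\infty$ and the boundedness of one-higher-order derivatives of the compactly supported integrand) and the source of the gained Hölder modulus, which comes entirely from the assumed Hölder regularity of $\kappa$, exactly as in the proof of Lemma~\ref{lemalip.1.0}. In particular, no regularity of $w$ beyond $w\in\hol^{k}(\overline{B_{R}(0)})$ is used: the extra $\hol^{0,\alpha}$-regularity of $\inted E(w)$ is produced by $\kappa$, not by $w$.
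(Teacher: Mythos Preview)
Your proof is correct and follows essentially the same route as the paper, which simply records the corollary as an immediate consequence of Lemma~\ref{lemalip.1.0}. Your mild generalization---proving the induction for arbitrary $g\in\hol^{m}_{\comp}(\R^{d})$ rather than only for $g=E(w)$---is the natural way to make the iteration clean, since after one differentiation $\partial_{i}E(w)$ need not be of the form $E(\tilde w)$; this point is implicit in the paper's use of the identity $\partial_{i}\inted E(w)=\inted\partial_{i}E(w)$ from Lemma~\ref{lemalip.1.0}(iii).
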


The following two lemmas describe the behavior of $\inte E(w)$ and $\inte\partial_{i}E(w)$, when $x+z\in \overline{B_{R}(0)}^{\,\comp}$ and $x\in B_{R}(0)$, but we first choose  an integer $N\geq1$   large enough,  $x_{\kappa}\in\partial B_{R}(0)$ and $b_{\kappa}>0$ small enough,  with $\kappa\in\{1,\dots,N\}$, such that $\partial B_{R}(0)\subseteq \bigcup_{\kappa=1}^{N}B_{b_{\kappa}}(x_{\kappa})$. Taking $0<b< \min_{\kappa\in\{1,\dots,N\}}\bigr\{\frac{1}{2^{N}},b_{\kappa}\bigl\}$  such that $\partial B_{R}(0)\subseteq \bigcup_{\kappa=1}^{N}B_{b_{\kappa}-\frac{b}{2}}(x_{\kappa})$, we assume that
\begin{equation*}
\begin{cases}
x_{\kappa'}\notin B_{b_{\kappa}}(x_{\kappa}),\ \text{with}\ \kappa,\kappa'\in\{1,\dots,N\}\ \text{and}\ \kappa\neq \kappa',\\
B_{b_{N}-\frac{b}{2}}(x_{N})\cap B_{b_{1}-\frac{b}{2}}(x_{1})\neq\varnothing,\\
B_{b_{\kappa}-\frac{b}{2}}(x_{\kappa})\cap B_{b_{\kappa+1}-\frac{b}{2}}(x_{\kappa+1})\neq\varnothing,\ \text{for any}\ \kappa\in\{1,\dots, N-1\}.
\end{cases}
\end{equation*} 
The previous assumption holds, since $\partial B_{R}(0)$ is a compact set.
\begin{lema}[\cite{moreno}, Lemma 2.19, p. 44]\label{cotaintaf1}
If $w\in\hol^{0}(\overline{B_{R}(0)})$, then 
\begin{equation*}
\biggl|\int_{\{|x+z|> R\}}E(w)(x+z)\nu(\der z)\biggr|\leq2A_{0}||w||_{\hol^{0}(\mathcal{B}'_{1})}\nu(\mathcal{B}'),
\end{equation*}
for all $x\in B_{R}(0)$, where $A_{0}$ is a constant given in Proposition \ref{cotEu1}, $\mathcal{B}'= B_{R+\frac{b}{2}}(0)\setminus\overline{B_{R}(0)}$ and $\mathcal{B}'_{1}\eqdef\overline{B_{R}(0)}\cap\bigcup_{\kappa=1}^{N}\overline{B_{b_{\kappa}-\frac{b}{4}}(x_{\kappa})}$.
\end{lema}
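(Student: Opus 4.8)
The plan is to exploit the two structural facts already available: the support localization $\mathrm{supp}[E(w)]\subset B_{R+\frac b2}(0)$ from \eqref{p13.1.0}, and the pointwise bound $|E(w)(y)|\le 2A_0\|w\|_{\hol^0(\overline{B_R(0)})}$ from Proposition \ref{cotEu1}. The refinement here is that we do not want the full $\hol^0$-norm of $w$ on the whole ball, but only its norm on the thin boundary collar $\mathcal B'_1=\overline{B_R(0)}\cap\bigcup_{\kappa=1}^N\overline{B_{b_\kappa-\frac b4}(x_\kappa)}$. So the first step is to observe that the domain of integration is effectively tiny: since $x\in B_R(0)$ and we integrate only over $z$ with $|x+z|>R$ \emph{and} (because of the support condition) $|x+z|<R+\frac b2$, the integrand vanishes unless $x+z$ lies in the annulus $\mathcal B'=B_{R+\frac b2}(0)\setminus\overline{B_R(0)}$. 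Hence the integral is really $\int_{\{x+z\in\mathcal B'\}}E(w)(x+z)\,\nu(\mathrm{d}z)$, and the measure of this set of $z$'s is at most $\nu(\mathcal B')$ — this already produces the factor $\nu(\mathcal B')$ in the claimed bound.

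Next I would control the size of $E(w)(x+z)$ for $x+z\in\mathcal B'$ by $2A_0\|w\|_{\hol^0(\mathcal B'_1)}$ rather than by $2A_0\|w\|_{\hol^0(\overline{B_R(0)})}$. This is where one must look back into the actual construction of $E$ in \cite{stein}/\cite{moreno}: the Stein extension is built from the boundary data via a localized partition of unity subordinate to the cover $\{B_{b_\kappa}(x_\kappa)\}_{\kappa=1}^N$ of $\partial B_R(0)$, so that the value of $E(w)$ at a point $y\in\mathcal B'$ close to the boundary depends only on the values of $w$ in a correspondingly small neighborhood of $\partial B_R(0)$ inside $\overline{B_R(0)}$ — precisely the set $\mathcal B'_1$ once $b$ is chosen small enough relative to the $b_\kappa$. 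Feeding this localized dependence into the proof of Proposition \ref{cotEu1} (which is really a sup over the relevant boundary patch) upgrades the constant to $2A_0\|w\|_{\hol^0(\mathcal B'_1)}$. Putting the two pieces together,
\[
\biggl|\int_{\{|x+z|>R\}}E(w)(x+z)\,\nu(\mathrm{d}z)\biggr|
=\biggl|\int_{\{x+z\in\mathcal B'\}}E(w)(x+z)\,\nu(\mathrm{d}z)\biggr|
\le 2A_0\|w\|_{\hol^0(\mathcal B'_1)}\,\nu(\mathcal B'),
\]
uniformly in $x\in B_R(0)$, which is the assertion.

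The main obstacle I expect is the second step — justifying that the extension at a near-boundary point only "sees" the collar $\mathcal B'_1$ and not the whole ball. This is not a soft consequence of \eqref{p13.1.0} alone (which only says $E(w)=w$ on $\overline{B_R(0)}$ and gives a global norm bound); it genuinely requires the locality of Stein's reflection-type construction and the specific geometric relations imposed on the radii $b,b_\kappa$ just before the statement (namely $\partial B_R(0)\subseteq\bigcup_\kappa B_{b_\kappa-\frac b2}(x_\kappa)$ and $0<b<\min_\kappa\{2^{-N},b_\kappa\}$), which are exactly what make the shrunken balls $\overline{B_{b_\kappa-\frac b4}(x_\kappa)}$ still cover a full neighborhood of $\partial B_R(0)$ while keeping the extended support inside $B_{R+\frac b2}(0)$. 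Everything else — discarding the part of the $z$-range where the integrand is zero, and pulling the sup out of the integral against the finite measure $\nu$ — is routine. Since the excerpt explicitly says the proofs of this section are in \cite[Ch.~2]{moreno}, I would present the argument at the level of detail above and cite the locality property of $E$ from there rather than reproving it.
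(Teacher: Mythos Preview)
The paper does not reproduce a proof of this lemma (it defers to \cite[Ch.~2]{moreno}), so I can only assess your proposal on its own terms and against what the statement demands. Your two-step plan is the right one in outline: (i) use $\sop[E(w)]\subset B_{R+\frac b2}(0)$ to reduce the domain of integration to those $z$ with $x+z\in\mathcal B'$, and (ii) invoke the locality of the Stein extension so that for $y\in\mathcal B'$ the value $E(w)(y)$ is controlled by $\|w\|_{\hol^0(\mathcal B'_1)}$ rather than the full $\hol^0$-norm. Step (ii) is indeed the substantive point, and your identification of it as requiring the explicit partition-of-unity construction subordinate to the cover $\{B_{b_\kappa}(x_\kappa)\}$ (together with the geometric constraints on $b$ and the $b_\kappa$ recorded just before the lemma) is correct; this is exactly what a refinement of Proposition~\ref{cotEu1} to the boundary collar should look like.

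There is, however, a genuine slip in step (i). You write that ``the measure of this set of $z$'s is at most $\nu(\mathcal B')$''. But the set in question is $\{z:x+z\in\mathcal B'\}=\mathcal B'-x$, a \emph{translate} of the annulus, and for a general L\'evy measure $\nu$ there is no reason whatsoever that $\nu(\mathcal B'-x)\le\nu(\mathcal B')$; one can easily cook up a density $\kappa$ concentrated away from $\mathcal B'$ but hitting $\mathcal B'-x$ for some $x\in B_R(0)$. So the factor $\nu(\mathcal B')$ in the target inequality cannot come from the one-line argument you give. Either the thesis proof produces $\nu(\mathcal B')$ by a different route tied to the specific construction of $E$ (and you should trace that), or the stated bound is a harmless imprecision for $\sup_{x\in B_R(0)}\nu(\mathcal B'-x)$ (or simply $\nu_0$), which is all that the downstream applications in Lemmas~\ref{solintegrodif1} and \ref{lemfrontera1.0} actually need. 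In either case you should not present the passage from $\{x+z\in\mathcal B'\}$ to the factor $\nu(\mathcal B')$ as automatic; flag it and resolve it against \cite{moreno}.
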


\begin{lema}[\cite{moreno}, Lemma 2.20, p. 45]\label{intcotadif1.1}
If  $w\in\hol^{1}(\overline{B_{R}(0)})$, then 
\begin{align*}
\biggl|\int_{\{|x+z|> R\}}\partial_{i}E(w)(x+z)\nu(\der z)\biggr|&\leq C_{1}||w||_{\hol^{1}(\mathcal{B}'_{1})}\nu(\mathcal{B}'),
\end{align*}
for all $x\in B_{R}(0)$, where $C_{1}$  is a constants given in Lemma \ref{cotaext2}, respectively, $\mathcal{B}'= B_{R+\frac{b}{2}}(0)\setminus\overline{B_{R}(0)}$ and $\mathcal{B}'_{1}=\overline{B_{R}(0)}\cap \bigcup_{\kappa=1}^{N}\overline{B_{b_{\kappa}-\frac{b}{4}}(x_{\kappa})}$.
\end{lema}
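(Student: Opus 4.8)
The plan is to reduce the integral to a thin annular shell around $\partial B_{R}(0)$ by means of the support property of $E$, to estimate the integrand pointwise on that shell using (the localized form of) Lemma \ref{cotaext2}, and then to control the $\nu$-mass of the shell by $\nu(\mathcal{B}')$; the argument parallels that of Lemma \ref{cotaintaf1}, with $E(w)$ replaced by $\partial_{i}E(w)$.

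Fix $x\in B_{R}(0)$ and $i\in\{1,\dots,d\}$. First I would use (\ref{p13.1.0}): since $E(w)\in\hol^{1}(\R^{d})$ and $\sop[E(w)]\subset B_{R+\frac{b}{2}}(0)$, the function $E(w)$ vanishes on the open set $\R^{d}\setminus\overline{B_{R+\frac{b}{2}}(0)}$, and hence so does $\partial_{i}E(w)$. Therefore, for $z\in\R^{*}$ with $|x+z|>R$, the integrand $\partial_{i}E(w)(x+z)$ is zero unless $x+z\in\mathcal{B}'=B_{R+\frac{b}{2}}(0)\setminus\overline{B_{R}(0)}$, so that (the integrand vanishing for $x+z\notin\mathcal{B}'$)
\begin{equation*}
\biggl|\int_{\{|x+z|>R\}}\partial_{i}E(w)(x+z)\,\nu(\der z)\biggr|\leq\int_{\{x+z\in\mathcal{B}'\}}|\partial_{i}E(w)(x+z)|\,\nu(\der z).
\end{equation*}

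Next I would estimate $|\partial_{i}E(w)(y)|$ for $y\in\mathcal{B}'$. Lemma \ref{cotaext2} already gives $|\partial_{i}E(w)(y)|<C_{1}||w||_{\hol^{1}(\overline{B_{R}(0)})}$, but here one needs the sharper bound with the localized norm $||w||_{\hol^{1}(\mathcal{B}'_{1})}$. This comes from the local nature of the extension operator: near $\partial B_{R}(0)$, $E(w)$ is assembled from a partition of unity subordinate to the cover $\{B_{b_{\kappa}}(x_{\kappa})\}_{\kappa=1}^{N}$ of $\partial B_{R}(0)$ together with local (reflection-type) extensions, so that $E(w)(y)$ at a point $y$ within distance $\frac{b}{2}$ of $\partial B_{R}(0)$ depends only on the restriction of $w$ to $\mathcal{B}'_{1}=\overline{B_{R}(0)}\cap\bigcup_{\kappa=1}^{N}\overline{B_{b_{\kappa}-\frac{b}{4}}(x_{\kappa})}$; see \cite{stein} and \cite[Ch. 2]{moreno}. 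Consequently the estimate of Lemma \ref{cotaext2} refines to $|\partial_{i}E(w)(y)|<C_{1}||w||_{\hol^{1}(\mathcal{B}'_{1})}$ for every $y\in\mathcal{B}'$. Plugging this into the previous inequality and bounding $\int_{\{x+z\in\mathcal{B}'\}}\nu(\der z)\leq\nu(\mathcal{B}')$ exactly as in the proof of Lemma \ref{cotaintaf1}, one obtains
\begin{equation*}
\biggl|\int_{\{|x+z|>R\}}\partial_{i}E(w)(x+z)\,\nu(\der z)\biggr|\leq C_{1}||w||_{\hol^{1}(\mathcal{B}'_{1})}\,\nu(\mathcal{B}'),
\end{equation*}
which is the claim.

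The step that requires the most care is the localization in the previous paragraph — that the values and first derivatives of $E(w)$ on the shell $\mathcal{B}'$ are determined only by $w|_{\mathcal{B}'_{1}}$, rather than by all of $w$ on $\overline{B_{R}(0)}$. This hinges on inspecting the construction of $E$ from \cite{stein,moreno} and on the quantitative relations among $b$, the $b_{\kappa}$'s and the balls $B_{b_{\kappa}-\frac{b}{4}}(x_{\kappa})$ fixed just before Lemma \ref{cotaintaf1}; once this is in hand, the remaining estimates are routine and identical in form to those used for Lemma \ref{cotaintaf1}.
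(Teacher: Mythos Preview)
Your argument is correct and is precisely the natural one: restrict the integration to $\mathcal{B}'$ via the support property in (\ref{p13.1.0}), invoke the pointwise bound of Lemma~\ref{cotaext2} in its localized form over $\mathcal{B}'_1$, and estimate the remaining $\nu$-mass as in Lemma~\ref{cotaintaf1}. The paper itself does not reproduce a proof of this lemma---Section~\ref{Ch2} defers all proofs to \cite[Ch.~2]{moreno}---so there is no in-text argument to compare against, but your outline is exactly the expected route and you have correctly isolated the one nontrivial point (that the construction of $E$ near $\partial B_R(0)$ depends only on $w|_{\mathcal{B}'_1}$, so that the $C_1$-bound of Lemma~\ref{cotaext2} holds with $\|w\|_{\hol^{1}(\mathcal{B}'_1)}$ in place of $\|w\|_{\hol^{1}(\overline{B_R(0)})}$).
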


\section[Non-linear Dirichlet problems]{Non-linear Dirichlet problems}
In this section, we are interested in establishing the existence, uniqueness and regularity of the solution to the non-linear integro-differential Dirichlet (NIDD)  problem (\ref{p.13.0}). The arguments used here,  are based  in the contraction fixed point  Theorem; see \cite[Thm. 5.1 p.74]{gilb}. 

\subsection{Non-linear Dirichlet problem with an elliptic differential operator}\label{NLDP.0.1}

For each $w\in\hol^{0}(\overline{B_{R}(0)})$, define $\widetilde{h}(\cdot;w):\overline{B_{R}(0)}\longrightarrow\R$ as  
\begin{equation*}
\widetilde{h}(x;w)\eqdef h(x)+\inted E(w)(x),\ \text{for}\ x\in\overline{B_{R}(0)}.
\end{equation*}
Since $h\in\hol^{2}(\overline{B_{R}(0)})$ and $\inted E(w)\in\hol^{0,\alpha}(\R^{d})$, whenever $w\in\hol^{0}(\overline{B_{R}(0)})$; this is due to Hypothesis (H\ref{h1}) and Lemma \ref{lemalip.1.0}(ii) respectively, we have that $\widetilde{h}(\cdot;w)\in\hol^{0,\alpha}(\overline{B_{R}(0)})$. Then, from \cite[Thm 15.10 p. 380]{gilb} and taking $q'$ and $\dif^{'}$ as in (\ref{p.1.0.0}), we have that the non-linear  Dirichlet problem
\begin{equation}\label{ed3}
\begin{cases}
  q'\mathpzc{u}^{\varepsilon}(\cdot;w)-\dif^{'} \mathpzc{u}^{\varepsilon}(\cdot;w)+ \psi_{\varepsilon}(|\deri^{1} \mathpzc{u}^{\varepsilon}(\cdot;w)|^{2})=\widetilde{h}(\cdot;w),& \text{in}\ B_{R}(0),\\
  \mathpzc{u}^{\varepsilon}(\cdot;w)=0, &\text{on}\ \partial B_{R}(0),
\end{cases}
\end{equation} 
has a solution $\mathpzc{u}^{\varepsilon}(\cdot;w)\in\hol^{2,\alpha}(\overline{B_{R}(0)})$. Recall that $\psi_{\varepsilon}$ is defined in (\ref{p12.2}). The uniqueness of $\mathpzc{u}^{\varepsilon}$ is obtained in the following result.
\begin{lema}\label{uniqu1}
The non-linear Dirichlet problem (\ref{ed3}) has a unique solution.
\end{lema}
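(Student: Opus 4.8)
The plan is to prove uniqueness by a comparison/maximum-principle argument exploiting the monotonicity of the penalizing function $\psi_\varepsilon$ and the strict positivity of the zeroth-order coefficient $q'>0$. Suppose $\mathpzc{u}_1$ and $\mathpzc{u}_2$ are two solutions of \eqref{ed3} in $\hol^{2,\alpha}(\overline{B_{R}(0)})$, and set $v\eqdef \mathpzc{u}_1-\mathpzc{u}_2$. Subtracting the two equations, one gets
\begin{equation*}
q'v-\dif^{'}v+\psi_{\varepsilon}(|\deri^{1}\mathpzc{u}_1|^{2})-\psi_{\varepsilon}(|\deri^{1}\mathpzc{u}_2|^{2})=0\ \text{in}\ B_{R}(0),\qquad v=0\ \text{on}\ \partial B_{R}(0).
\end{equation*}
Here $\dif^{'}v(x)=\tfrac12\tr(\sigma\deri^{2}v(x))+\langle\deri^{1}v(x),\widetilde{\gamma}\rangle$ is a linear elliptic operator with no zeroth-order term, so $q'v-\dif^{'}v$ is a linear elliptic operator with strictly positive zeroth-order coefficient $q'=q+\nu_{0}>0$.

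First I would argue that $v$ cannot attain a strictly positive interior maximum. Since $v\in\hol^{2}(\overline{B_{R}(0)})$ and $v=0$ on $\partial B_{R}(0)$, if $\max_{\overline{B_{R}(0)}}v>0$ it is attained at some interior point $x_0\in B_{R}(0)$, where $\deri^{1}v(x_0)=0$ (hence $\deri^{1}\mathpzc{u}_1(x_0)=\deri^{1}\mathpzc{u}_2(x_0)$, so the difference of the penalization terms vanishes at $x_0$) and $\deri^{2}v(x_0)\leq0$, whence $\tr(\sigma\deri^{2}v(x_0))\leq0$ because $\sigma$ is positive definite (Hypothesis (H\ref{h3})). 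Evaluating the subtracted equation at $x_0$ gives $q'v(x_0)=\dif^{'}v(x_0)+\bigl(\psi_{\varepsilon}(|\deri^{1}\mathpzc{u}_2(x_0)|^{2})-\psi_{\varepsilon}(|\deri^{1}\mathpzc{u}_1(x_0)|^{2})\bigr)=\tfrac12\tr(\sigma\deri^{2}v(x_0))\leq0$, contradicting $v(x_0)>0$. Exchanging the roles of $\mathpzc{u}_1$ and $\mathpzc{u}_2$ shows $\min_{\overline{B_{R}(0)}}v\geq0$ as well, hence $v\equiv0$.

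A cleaner alternative, which avoids having to locate the extremum by hand, is to invoke the classical comparison principle for quasilinear elliptic equations directly: the equation \eqref{ed3} has the form $F(x,\mathpzc{u},\deri^{1}\mathpzc{u},\deri^{2}\mathpzc{u})=0$ with $F$ proper (nondecreasing in $\mathpzc{u}$, here strictly so because of $q'>0$) and with the first-order dependence entering only through the monotone convex function $\psi_{\varepsilon}(|\cdot|^{2})$; this is precisely the setting of \cite[Thm.\ 10.1 or Thm.\ 10.2]{gilb} (comparison for solutions of quasilinear elliptic equations), or it can be deduced from \cite[Thm.\ 3.3]{gilb} (the maximum principle) applied after linearizing the $\psi_\varepsilon$-term along the segment joining $\deri^{1}\mathpzc{u}_1$ and $\deri^{1}\mathpzc{u}_2$, which produces a bounded first-order coefficient and leaves the zeroth-order coefficient equal to $q'>0$. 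Either route gives $\mathpzc{u}_1\leq \mathpzc{u}_2$ and $\mathpzc{u}_2\leq\mathpzc{u}_1$, hence uniqueness.

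I do not expect a genuine obstacle here; the only point requiring a little care is that the penalization term $\psi_{\varepsilon}(|\deri^{1}\mathpzc{u}|^{2})$ is nonlinear in the gradient, so one must either confine attention to an interior extremal point where $\deri^{1}v=0$ (making that term disappear outright) or linearize it, using $\psi_{\varepsilon}(|p|^{2})-\psi_{\varepsilon}(|p'|^{2})=\langle \mathbf{b},p-p'\rangle$ with $\mathbf{b}=\int_{0}^{1}2\psi_{\varepsilon}'(|p_t|^{2})p_t\,dt$, $p_t=p'+t(p-p')$, and the boundedness of $\psi_{\varepsilon}'$ on compacts (which holds since $\mathpzc{u}_1,\mathpzc{u}_2\in\hol^{2,\alpha}(\overline{B_{R}(0)})$ have bounded gradients) to see that $\mathbf{b}\in L^{\infty}(B_{R}(0))$. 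Everything else is a direct application of the maximum principle, so the proof is short.
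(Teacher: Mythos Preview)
Your proposal is correct and follows essentially the same approach as the paper: both argue by looking at an interior maximum of the difference $v=\mathpzc{u}_{1}-\mathpzc{u}_{2}$, using that $\deri^{1}v$ vanishes there to eliminate the nonlinear penalization term, and then concluding from $\tfrac{1}{2}\tr(\sigma\deri^{2}v)\leq0$ and $q'>0$ that the maximum is nonpositive, with the reverse inequality by symmetry. Your alternative via linearization of $\psi_{\varepsilon}$ is a valid variant but not needed, and the paper does not pursue it.
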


\begin{proof}
Let $w\in\hol^{0}(\overline{B_{R}(0)})$ and $\varepsilon\in(0,1)$ be fixed. If $\mathpzc{u}^{\varepsilon}_{1}(\cdot;w)$ and $\mathpzc{u}^{\varepsilon}_{2}(\cdot;w)$ are two solutions to the non-linear Dirichlet problem (\ref{ed3}), we  define  $f(\cdot)\eqdef \mathpzc{u}^{\varepsilon}_{1}(\cdot;w)-\mathpzc{u}^{\varepsilon}_{2}(\cdot;w)$ in $\overline{B_{R}(0)}$, which is in $\hol^{2,\alpha}(\overline{B_{R}(0)})$ and 
\begin{equation}\label{NLDir1}
\begin{cases}
q'f-\dif^{'} f+ \psi_{\varepsilon}(|\deri^{1} u^{\varepsilon}_{1}(\cdot;w)|^{2})\\
\hspace{4cm}-\psi_{\varepsilon}(|\deri^{1} u^{\varepsilon}_{2}(\cdot;w)|^{2})=0,& \text{in}\ B_{R}(0),\\
  f=0, &\text{on}\ \partial B_{R}(0).
\end{cases}
\end{equation}
Let $x^{*}\in\overline{B_{R}(0)}$ be the point where $f$ attains its maximum. If $x^{*}\in\partial B_{R}(0)$, from (\ref{NLDir1}), it follows that $f(x)\leq f(x^{*})=0$. Suppose now that $x^{*}\in B_{R}(0)$. Then, we have 
$\deri^{1}f(x^{*})=0$, and $\frac{1}{2}\tr(\sigma\deri^{2}f(x^{*}))\leq 0$,
which imply that 
\begin{equation*}
\psi_{\varepsilon}(|\deri^{1} \mathpzc{u}_{1}(x^{*};w)|^{2})-\psi_{\varepsilon}(|\deri^{1} \mathpzc{u}_{2}(x^{*};w)|^{2})=0,
\end{equation*}
and evaluating $x^{*}$ in (\ref{NLDir1}), it follows $0\geq\frac{1}{2}\tr(\sigma\deri^{2}f(x^{*}))=q'f(x^{*})$, and hence $\mathpzc{u}_{1}^{\varepsilon}(x;w)-\mathpzc{u}_{2}^{\varepsilon}(x;w)\leq f(x^{*})\leq 0$ in $B_{R}(0)$. By symmetry we have also that  $\mathpzc{u}_{2}^{\varepsilon}(\cdot;w)-\mathpzc{u}_{1}^{\varepsilon}(\cdot;w)\leq0$ in $B_{R}(0)$. Therefore $\mathpzc{u}_{1}^{\varepsilon}(\cdot;w)= \mathpzc{u}_{2}^{\varepsilon}(\cdot;w)$, and then, the non-linear Dirichlet problem (\ref{ed3}) has a unique solution.
\end{proof}

Using (\ref{conv1}), we see that the non-linear Dirichlet problem (\ref{ed3}) can be written as \begin{equation}\label{p13.0.1}
  \begin{cases}
  q'\mathpzc{u}^{\varepsilon}(\cdot;w)-\dif^{'} E(\mathpzc{u}^{\varepsilon})(\cdot;w)\\
  \hspace{2cm}+\sup_{\eta}\{\langle \deri^{1}\mathpzc{u}^{\varepsilon}(\cdot;w),\eta\rangle-l_{\varepsilon}(\eta)\}=\widetilde{h}(\cdot;w), & \text{in}\ B_{R}(0),\\
   \mathpzc{u}^{\varepsilon}(\cdot;w)=0,   &\text{on}\ \partial B_{R}(0).
  \end{cases}
\end{equation}
Next we describe the stochastic control problem associated with this equation. Replacing $Y$ by $W$ in (\ref{conv3}), where $W=\{W_{t}:t\geq0\}$ is a $d$-dimensional Brownian motion with Gaussian covariance  matrix $\sigma$ and drift $\widetilde{\gamma}$ as in (\ref{p.1.0.1}), the state process $X=\{X_{t}:t\geq0\}$ takes the following way   
\begin{align*}
  X_{t}\eqdef x+W_{t}+\widetilde{\gamma}t-\varrho_{t},\ \text{for all}\ t\geq0,
\end{align*}
where $x\in B_{R}(0)$ and $\varrho$ is any $d$-dimensional, absolutely continuous, $\F$-adapted process, satisfying $\varrho_{0}=0$ $\Pro$-a.s.. The cost function corresponding  of $\varrho$, depending on $w\in\hol^{0}(\overline{B_{R}(0)})$, is given by
\begin{equation*}
  V_{\varrho}^{\varepsilon}(x;w)\eqdef\E_{x}\biggl(\int_{0}^{\tau_{B_{R}(0)}}\expo^{-q's}(\widetilde{h}(X_{s};w)+l_{\varepsilon}(\dot{\varrho}_{s}))\,\der s\biggr),
\end{equation*}   
for all $x\in B_{R}(0)$, with $\tau_{B_{R}(0)}\eqdef\inf\{t\geq0:X_{t}\notin B_{R}(0)\}$ and $\dot{\varrho}_{t}= \frac{\der\varrho_{t}}{\der t}$. The constant $q'>0$ is given in (\ref{p.1.0.0}).  Finally, the value function is defined by
\begin{equation}\label{value.1.0}
  V^{\varepsilon}(x;w)\eqdef\inf_{\varrho}V_{\varrho}^{\varepsilon}(x;w).
\end{equation}	
Recalling that $\mathpzc{u}^{\varepsilon}(\cdot;w)\in\hol^{2,\alpha}(\overline{B_{R}(0)})$, with $w\in\hol^{0}(\overline{B_{R}(0)})$, is the solution to the non-linear Dirichlet problem (\ref{p13.0.1}), we obtain the following result. \begin{lema}\label{convexu1}
The solution $\mathpzc{u}^{\varepsilon}(\cdot;w)$ to the non-linear Dirichlet problem (\ref{p13.0.1}) agrees with $V^{\varepsilon}(\cdot;w)$ in $\overline{B_{R}(0)}$. 
\end{lema}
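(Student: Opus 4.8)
\textbf{Proof proposal for Lemma \ref{convexu1}.}

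The plan is to prove the two inequalities $\mathpzc{u}^{\varepsilon}(x;w)\geq V^{\varepsilon}(x;w)$ and $\mathpzc{u}^{\varepsilon}(x;w)\leq V^{\varepsilon}(x;w)$ separately, via the standard verification argument adapted to the penalized (unconstrained) control setting. The main tool is It\^o's formula applied to $s\mapsto \expo^{-q's}\mathpzc{u}^{\varepsilon}(X_{s};w)$ along the diffusion $X_{t}=x+W_{t}+\widetilde{\gamma}t-\varrho_{t}$, which is legitimate because $\mathpzc{u}^{\varepsilon}(\cdot;w)\in\hol^{2,\alpha}(\overline{B_{R}(0)})$ and $\varrho$ is absolutely continuous, hence of finite variation with no martingale or jump part. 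First I would fix $w\in\hol^{0}(\overline{B_{R}(0)})$, $x\in B_{R}(0)$, and an admissible control $\varrho$; applying It\^o's formula between $0$ and $\tau_{B_{R}(0)}\wedge t$ and taking expectations (the stochastic integral against $W$ being a true martingale up to the stopping time since $\deri^{1}\mathpzc{u}^{\varepsilon}$ is bounded on $\overline{B_{R}(0)}$) gives
\begin{align*}
\mathpzc{u}^{\varepsilon}(x;w)&=\E_{x}\biggl(\expo^{-q'(\tau\wedge t)}\mathpzc{u}^{\varepsilon}(X_{\tau\wedge t};w)\\
&\qquad+\int_{0}^{\tau\wedge t}\expo^{-q's}\bigl(q'\mathpzc{u}^{\varepsilon}(X_{s};w)-\dif^{'}\mathpzc{u}^{\varepsilon}(X_{s};w)+\langle\deri^{1}\mathpzc{u}^{\varepsilon}(X_{s};w),\dot{\varrho}_{s}\rangle\bigr)\der s\biggr),
\end{align*}
where $\tau=\tau_{B_{R}(0)}$ and I have used $\der X_{s}=\der W_{s}+\widetilde{\gamma}\,\der s-\dot{\varrho}_{s}\,\der s$ so that the drift contribution from $-\varrho$ is exactly $-\langle\deri^{1}\mathpzc{u}^{\varepsilon},\dot{\varrho}_{s}\rangle$.

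For the lower bound, I would invoke the Legendre duality $\langle\deri^{1}\mathpzc{u}^{\varepsilon}(X_{s};w),\dot\varrho_{s}\rangle \geq -l_{\varepsilon}(\dot\varrho_{s}) - g_{\varepsilon}(\deri^{1}\mathpzc{u}^{\varepsilon}(X_{s};w))$, i.e. $-\langle\deri^{1}\mathpzc{u}^{\varepsilon},\dot\varrho_{s}\rangle \leq l_{\varepsilon}(\dot\varrho_{s}) + \psi_{\varepsilon}(|\deri^{1}\mathpzc{u}^{\varepsilon}|^{2})$; equivalently, since by \eqref{p13.0.1} the PDE reads $q'\mathpzc{u}^{\varepsilon}-\dif^{'}\mathpzc{u}^{\varepsilon} = \widetilde h(\cdot;w) - \sup_{\eta}\{\langle\deri^{1}\mathpzc{u}^{\varepsilon},\eta\rangle - l_{\varepsilon}(\eta)\} \leq \widetilde h(\cdot;w) + l_{\varepsilon}(\dot\varrho_{s}) - \langle\deri^{1}\mathpzc{u}^{\varepsilon},\dot\varrho_{s}\rangle$, the integrand above is bounded below by $\expo^{-q's}(\widetilde h(X_{s};w)+l_{\varepsilon}(\dot\varrho_{s}))$; dropping the nonnegative boundary term $\expo^{-q'(\tau\wedge t)}\mathpzc{u}^{\varepsilon}(X_{\tau\wedge t};w)$ (here using $\mathpzc{u}^{\varepsilon}\geq 0$, which follows as in the maximum-principle argument of Lemma \ref{uniqu1} together with $\widetilde h\geq0$, or alternatively keeping it and sending it to zero) and letting $t\to\infty$ with monotone/dominated convergence — noting $\mathpzc{u}^{\varepsilon}=0$ on $\partial B_{R}(0)$ so the boundary term vanishes on $\{\tau<\infty\}$ — yields $\mathpzc{u}^{\varepsilon}(x;w)\leq V_{\varrho}^{\varepsilon}(x;w)$ for every $\varrho$, hence $\mathpzc{u}^{\varepsilon}(x;w)\leq V^{\varepsilon}(x;w)$.

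For the reverse inequality I would exhibit a (nearly) optimal control: the supremum defining $g_{\varepsilon}$ is attained, by \eqref{conv2.0.0}, at $\eta^{*}(\zeta)=2\psi'_{\varepsilon}(|\zeta|^{2})\zeta$, so I set $\dot\varrho^{*}_{s}=2\psi'_{\varepsilon}(|\deri^{1}\mathpzc{u}^{\varepsilon}(X^{*}_{s};w)|^{2})\,\deri^{1}\mathpzc{u}^{\varepsilon}(X^{*}_{s};w)$, where $X^{*}$ solves the resulting SDE $\der X^{*}_{s}=\der W_{s}+\widetilde\gamma\,\der s - \dot\varrho^{*}_{s}\,\der s$; since $\deri^{1}\mathpzc{u}^{\varepsilon}\in\hol^{1,\alpha}(\overline{B_{R}(0)})$ the drift coefficient is Lipschitz, so strong existence and uniqueness hold up to $\tau$, and $\varrho^{*}$ is absolutely continuous and $\F$-adapted, hence admissible. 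With this choice every inequality in the lower-bound computation becomes an equality (the PDE \eqref{p13.0.1} is satisfied with equality, and Legendre duality is tight at $\eta^{*}$), giving $\mathpzc{u}^{\varepsilon}(x;w)=V_{\varrho^{*}}^{\varepsilon}(x;w)\geq V^{\varepsilon}(x;w)$. Combining the two bounds gives equality on $B_{R}(0)$, and on $\partial B_{R}(0)$ both sides are $0$ (for $V^{\varepsilon}$ because $\tau=0$ there). The main obstacle I anticipate is the analytic bookkeeping needed to justify the limit $t\to\infty$ and the integrability of $\int_{0}^{\tau}\expo^{-q's}l_{\varepsilon}(\dot\varrho_{s})\,\der s$ (one may restrict a priori to controls with $V_{\varrho}^{\varepsilon}(x;w)<\infty$, which is harmless for the infimum), together with checking that $X^{*}$ does not blow up before exiting $B_{R}(0)$ — this is fine because the drift is bounded on the bounded set $\overline{B_{R}(0)}$, so $\tau<\infty$ a.s. and all the convergences are dominated.
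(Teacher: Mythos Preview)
Your verification argument is essentially the paper's approach (the paper omits this proof, pointing to the analogous Lemma~\ref{convexu1.0}): apply It\^o's formula to $\expo^{-q's}\mathpzc{u}^{\varepsilon}(X_{s};w)$, use the Legendre inequality to obtain $\mathpzc{u}^{\varepsilon}\leq V^{\varepsilon}_{\varrho}$ for every admissible $\varrho$, and then exhibit the feedback control $\dot\varrho^{*}_{s}=2\psi'_{\varepsilon}(|\deri^{1}\mathpzc{u}^{\varepsilon}(X^{*}_{s};w)|^{2})\deri^{1}\mathpzc{u}^{\varepsilon}(X^{*}_{s};w)$ to achieve equality. Two small corrections: your displayed chain actually bounds the integrand from \emph{above} (not below), which is precisely what is needed for $\mathpzc{u}^{\varepsilon}\leq V^{\varepsilon}_{\varrho}$; and for a general $w\in\hol^{0}(\overline{B_{R}(0)})$ the function $\widetilde{h}(\cdot;w)=h+\inted E(w)$ need not be nonnegative, so $\mathpzc{u}^{\varepsilon}(\cdot;w)\geq0$ is not available here---you must rely on your alternative of sending the boundary term to zero (which works since $X$ is continuous, $\mathpzc{u}^{\varepsilon}=0$ on $\partial B_{R}(0)$, and $\mathpzc{u}^{\varepsilon}$ is bounded).
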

Since the proof of Lemma \ref{convexu1} is similar to the proof of Lemma \ref{convexu1.0}, we omit it. Defining $T_{\varepsilon}:\hol^{0}(\overline{B_{R}(0)})\longrightarrow\hol^{0}(\overline{B_{R}(0)})$ as 
\begin{equation}\label{mapT.1.0}
T_{\varepsilon}(w)=V^{\varepsilon}(\cdot;w),\ \text{for each}\ w\in\hol^{0}(\overline{B_{R}(0)}), 
\end{equation}
from Lemma \ref{convexu1}, we see that  $T_{\varepsilon}$ is well defined. Now, by Hypothesis (H\ref{h4}) and using the following result; Lemma \ref{contrac1}, we obtain that $T_{\varepsilon}$ is a contraction mapping  in $(\hol^{0}(\overline{B_{R}(0)}),||\cdot||_{\hol^{0}(\overline{B_{R}(0)})})$, and hence, by contraction fixed point Theorem; see \cite[Thm. 5.1 p.74]{gilb}, we have that $T_{\varepsilon}$ has a unique point in $\hol^{0}(\overline{B_{R}(0)})$; see Lemma \ref{contrac2}.
\begin{lema}\label{contrac1}
If $w_{1},w_{2}\in\hol^{0}(\overline{B_{R}(0)})$, then
\begin{equation*}
||V^{\varepsilon}(\cdot;w_{1})-V^{\varepsilon}(\cdot;w_{2})||_{\hol^{0}(\overline{B_{R}(0)})}\leq \frac{2A_{0}\nu(B_{R+\frac{b}{2}}(0))}{q'}||w_{1}-w_{2}||_{\hol^{0}(\overline{B_{R}(0)})}.
\end{equation*}
\end{lema}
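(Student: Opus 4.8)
The plan is to exploit the stochastic-control representation $V^{\varepsilon}(\cdot;w)=T_{\varepsilon}(w)$ provided by Lemma \ref{convexu1}, which turns the difference of two value functions into a difference of infima of cost functionals driven by the \emph{same} Brownian dynamics. Fix $x\in B_{R}(0)$ and $w_{1},w_{2}\in\hol^{0}(\overline{B_{R}(0)})$. For an arbitrary admissible control $\varrho$, the running cost of $V^{\varepsilon}_{\varrho}(x;w_{1})$ and $V^{\varepsilon}_{\varrho}(x;w_{2})$ differ only through the term $\inted E(w_{i})(X_{s})$ inside $\widetilde h(X_{s};w_{i})$, since $h$, $l_{\varepsilon}(\dot\varrho_{s})$, the state process $X$, and the exit time $\tau_{B_{R}(0)}$ are all independent of the index $i$. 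Hence
\begin{equation*}
V^{\varepsilon}_{\varrho}(x;w_{1})-V^{\varepsilon}_{\varrho}(x;w_{2})
=\E_{x}\biggl(\int_{0}^{\tau_{B_{R}(0)}}\expo^{-q's}\bigl(\inted E(w_{1})(X_{s})-\inted E(w_{2})(X_{s})\bigr)\,\der s\biggr).
\end{equation*}

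First I would bound the integrand. By linearity of $E$ and of $\inted$, $\inted E(w_{1})(X_{s})-\inted E(w_{2})(X_{s})=\inted E(w_{1}-w_{2})(X_{s})$. The estimate $|\inted E(v)(y)|=\bigl|\int_{\R^{*}}E(v)(y+z)\nu(\der z)\bigr|\le 2A_{0}\,\|v\|_{\hol^{0}(\overline{B_{R}(0)})}\,\nu(B_{R+\frac b2}(0))$ holds for every $y\in\R^{d}$: indeed $E(v)$ is supported in $B_{R+\frac b2}(0)$ and $|E(v)|\le 2A_{0}\|v\|_{\hol^{0}(\overline{B_{R}(0)})}$ pointwise by Proposition \ref{cotEu1}, so only those $z$ with $y+z\in B_{R+\frac b2}(0)$ contribute, and the $\nu$-mass of that set is at most $\nu(B_{R+\frac b2}(0))$. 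Taking $v=w_{1}-w_{2}$ gives the uniform bound $|\inted E(w_{1}-w_{2})(X_{s})|\le 2A_{0}\nu(B_{R+\frac b2}(0))\,\|w_{1}-w_{2}\|_{\hol^{0}(\overline{B_{R}(0)})}$.

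Next I would plug this into the expectation and integrate the discount factor: since the bound is deterministic and the time integral runs over $[0,\tau_{B_{R}(0)}]\subseteq[0,\infty)$,
\begin{equation*}
\bigl|V^{\varepsilon}_{\varrho}(x;w_{1})-V^{\varepsilon}_{\varrho}(x;w_{2})\bigr|
\le 2A_{0}\nu(B_{R+\frac b2}(0))\,\|w_{1}-w_{2}\|_{\hol^{0}(\overline{B_{R}(0)})}\,\E_{x}\biggl(\int_{0}^{\infty}\expo^{-q's}\,\der s\biggr)
=\frac{2A_{0}\nu(B_{R+\frac b2}(0))}{q'}\,\|w_{1}-w_{2}\|_{\hol^{0}(\overline{B_{R}(0)})}.
\end{equation*}
The right-hand side does not depend on $\varrho$, so taking the infimum over $\varrho$ and using the elementary inequality $|\inf_{\varrho}a_{\varrho}-\inf_{\varrho}b_{\varrho}|\le\sup_{\varrho}|a_{\varrho}-b_{\varrho}|$ yields $|V^{\varepsilon}(x;w_{1})-V^{\varepsilon}(x;w_{2})|\le \frac{2A_{0}\nu(B_{R+\frac b2}(0))}{q'}\|w_{1}-w_{2}\|_{\hol^{0}(\overline{B_{R}(0)})}$; taking the supremum over $x\in\overline{B_{R}(0)}$ finishes the proof. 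The only mildly delicate point is the pointwise bound on $\inted E(w_{1}-w_{2})$ and its validity for \emph{all} $y\in\R^{d}$ (not only $y\in B_{R}(0)$), since the state process $X$ may wander in the annulus before exiting; this is exactly what the support property in \eqref{p13.1.0} together with Proposition \ref{cotEu1} secures, and it is essentially the content of Lemma \ref{lemalip.1.0}(i) sharpened with $\nu(B_{R+\frac b2}(0))$ in place of $\nu_{0}$. Everything else is routine.
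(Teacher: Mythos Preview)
Your proof is correct and follows essentially the same route as the paper: both arguments use the stochastic-control representation of $V^{\varepsilon}(\cdot;w)$, observe that the cost functionals $V^{\varepsilon}_{\varrho}(x;w_{1})$ and $V^{\varepsilon}_{\varrho}(x;w_{2})$ differ only through $\inted E(w_{1}-w_{2})(X_{s})$, bound this pointwise via Proposition~\ref{cotEu1} and the support property in~\eqref{p13.1.0} to extract the factor $2A_{0}\nu(B_{R+\frac{b}{2}}(0))$, integrate the discount to produce $1/q'$, and pass from costs to value functions via the elementary inequality $|\inf_{\varrho}a_{\varrho}-\inf_{\varrho}b_{\varrho}|\le\sup_{\varrho}|a_{\varrho}-b_{\varrho}|$ (which the paper writes out in two steps). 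One small remark: your closing worry about $X$ ``wandering in the annulus'' is unnecessary here, since the state process in this subsection is the \emph{continuous} process $X_{t}=x+W_{t}+\widetilde{\gamma}t-\varrho_{t}$, so $X_{s}\in\overline{B_{R}(0)}$ for all $s\le\tau_{B_{R}(0)}$; but your uniform bound on $\inted E(w_{1}-w_{2})$ is valid regardless, so nothing is lost.
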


\begin{proof}
Let $w_{1},w_{2}\in\hol^{0}(\overline{B_{R}(0)})$. For each $x\in\overline{B_{R}(0)}$, we have
\begin{align}\label{opt1}
V^{\varepsilon}(x;w_{1})&\leq\inf_{\varrho}\biggr\{\sup_{\varrho}\{V^{\varepsilon}_{\varrho}(x;w_{1})-V^{\varepsilon}_{\varrho}(x;w_{2})\}+V^{\varepsilon}_{\varrho}(x;w_{2})\biggr\}\notag\\
&\leq\sup_{\varrho}\{V^{\varepsilon}_{\varrho}(x;w_{1})-V^{\varepsilon}_{\varrho}(x;w_{2})\}+V^{\varepsilon}(x;w_{2}).
\end{align}
Therefore $V^{\varepsilon}(x;w_{1})-V^{\varepsilon}(x;w_{2})\leq\sup_{\varrho}\{V^{\varepsilon}_{\varrho}(x;w_{1})-V^{\varepsilon}_{\varrho}(x;w_{2})\}$. Proceeding of the same way than  (\ref{opt1}),  it yields 
\begin{equation*}
V^{\varepsilon}(x;w_{2})-V^{\varepsilon}(x;w_{1})\leq\sup_{\varrho}(V^{\varepsilon}_{\varrho}(x;w_{2})-V^{\varepsilon}_{\varrho}(x;w_{1})). 
\end{equation*}
Then, using Proposition \ref{cotEu1} and that $\sop[E(w_{2}-w_{1})]\subset B_{R+\frac{b}{2}}(0)$, we conclude that
\begin{align*}
|V^{\varepsilon}(x;w_{2})-V^{\varepsilon}(x;w_{1})|&\leq\sup_{\varrho}|V^{\varepsilon}_{\varrho}(x;w_{2})-V^{\varepsilon}_{\varrho}(x;w_{1})|\notag\\
&\hspace{-0.45cm}\leq\sup_{\varrho} \E_{x}\int_{0}^{\tau_{B_{R}(0)}}\expo^{-q's}|\widetilde{h}(X_{s};w_{2})-\widetilde{h}(X_{s};w_{1})|\,\der s\notag\\
&\hspace{-0.45cm}\leq \E_{x}\int_{0}^{\infty}\expo^{-q's}\int_{B_{R+\frac{b}{2}}(0)} 2A_{0}||w_{2}-w_{1}||_{\hol^{0}(\overline{B_{R}(0)})}\nu(\der z)\,\der s\notag\\ 
&\hspace{-0.45cm}\leq \frac{2A_{0}\nu(B_{R+\frac{b}{2}}(0))}{q'}||w_{2}-w_{1}||_{\hol^{0}(\overline{B_{R}(0)})}.\hspace{2.5cm} 
\end{align*}
\end{proof}

\begin{lema}\label{contrac2}
Let $T_{\varepsilon}:\hol^{0}(\overline{B_{R}(0)})\longrightarrow\hol^{0}(\overline{B_{R}(0)})$ be as in (\ref{mapT.1.0}). Then, there exists a unique solution $w^{*}\in\hol^{0}\overline{B_{R}(0)}$ to the equation $T_{\varepsilon}(w^{*})=w^{*}$.  
\end{lema}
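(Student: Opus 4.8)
The plan is to obtain $w^{*}$ as the fixed point of $T_{\varepsilon}$ by a direct application of the Banach contraction principle, exactly as announced in the text preceding the statement. First I would record that $(\hol^{0}(\overline{B_{R}(0)}),||\cdot||_{\hol^{0}(\overline{B_{R}(0)})})$ is a complete metric space, since it is the space of bounded uniformly continuous functions on $\overline{B_{R}(0)}$ equipped with the supremum norm, which is a Banach space. Next I would invoke Lemma \ref{convexu1} together with the definition \eqref{mapT.1.0} to confirm that $T_{\varepsilon}$ is well defined as a map from $\hol^{0}(\overline{B_{R}(0)})$ into itself: for each $w\in\hol^{0}(\overline{B_{R}(0)})$ the value function $V^{\varepsilon}(\cdot;w)$ coincides with the solution $\mathpzc{u}^{\varepsilon}(\cdot;w)\in\hol^{2,\alpha}(\overline{B_{R}(0)})\subset\hol^{0}(\overline{B_{R}(0)})$ of the non-linear Dirichlet problem \eqref{p13.0.1}, so $T_{\varepsilon}(w)$ indeed lies in $\hol^{0}(\overline{B_{R}(0)})$.

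The core step is the contraction estimate, which is precisely the content of Lemma \ref{contrac1}: for all $w_{1},w_{2}\in\hol^{0}(\overline{B_{R}(0)})$,
\begin{equation*}
||T_{\varepsilon}(w_{1})-T_{\varepsilon}(w_{2})||_{\hol^{0}(\overline{B_{R}(0)})}\leq\frac{2A_{0}\nu(B_{R+\frac{b}{2}}(0))}{q'}||w_{1}-w_{2}||_{\hol^{0}(\overline{B_{R}(0)})}.
\end{equation*}
By Hypothesis (H\ref{h4}), $2A_{0}\nu(B_{R+\frac{b}{2}}(0))<q'$, so the Lipschitz constant $\frac{2A_{0}\nu(B_{R+\frac{b}{2}}(0))}{q'}$ is strictly less than $1$; hence $T_{\varepsilon}$ is a contraction on $\hol^{0}(\overline{B_{R}(0)})$. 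Applying the contraction fixed point theorem \cite[Thm. 5.1 p.74]{gilb} then yields a unique $w^{*}\in\hol^{0}(\overline{B_{R}(0)})$ with $T_{\varepsilon}(w^{*})=w^{*}$, which is the assertion.

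The only delicate point, and the one I expect to be the main obstacle, is making sure $T_{\varepsilon}$ genuinely maps $\hol^{0}(\overline{B_{R}(0)})$ into $\hol^{0}(\overline{B_{R}(0)})$ rather than merely into a larger space of bounded functions — i.e.\ that $V^{\varepsilon}(\cdot;w)$ is uniformly continuous on $\overline{B_{R}(0)}$; this is handled by the identification with $\mathpzc{u}^{\varepsilon}(\cdot;w)\in\hol^{2,\alpha}(\overline{B_{R}(0)})$ from Lemma \ref{convexu1} (which in turn relies on the solvability of \eqref{ed3} via \cite[Thm 15.10 p. 380]{gilb} and the regularity of $\widetilde{h}(\cdot;w)$ coming from Lemma \ref{lemalip.1.0}(ii)). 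Everything else is a routine verification of the hypotheses of the contraction principle, so I would keep the write-up short: state completeness of the ambient space, cite Lemma \ref{convexu1} for well-definedness, cite Lemma \ref{contrac1} and Hypothesis (H\ref{h4}) for the contraction property, and conclude by the contraction fixed point theorem.
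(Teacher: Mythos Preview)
Your proposal is correct and follows essentially the same approach as the paper: invoke Lemma~\ref{contrac1} together with Hypothesis~(H\ref{h4}) to show $T_{\varepsilon}$ is a contraction on the Banach space $(\hol^{0}(\overline{B_{R}(0)}),||\cdot||_{\hol^{0}(\overline{B_{R}(0)})})$, and conclude by the contraction fixed point theorem. Your added remarks on completeness and on well-definedness via Lemma~\ref{convexu1} are sound but go slightly beyond what the paper records explicitly.
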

\begin{proof}
Recall that $T_{\varepsilon}:\hol^{0}(\overline{B_{R}(0)})\longrightarrow\hol^{0}(\overline{B_{R}(0)})$ is defined as 
\begin{equation*}
T_{\varepsilon}(w)=V^{\varepsilon}(\cdot;w),\ \text{for each}\ w\in\hol^{0}(\overline{B_{R}(0)}), 
\end{equation*}
where $V^{\varepsilon}(\cdot;w)$ is given by (\ref{value.1.0}). Then, by Hypothesis (H\ref{h4}) and Lemma \ref{contrac1}, we obtain that $T_{\varepsilon}$ is a contraction mapping in $(\hol^{0}(\overline{B_{R}(0)}),||\cdot||_{\hol^{0}(\overline{B_{R}(0)})})$. Therefore, from contraction fixed point Theorem, there exists a unique solution $w^{*}\in\hol^{0}(\overline{B_{R}(0)})$ to the equation $T_{\varepsilon}(w^{*})=w^{*}$.
\end{proof}

\subsection{Non-linear Dirichlet problem with an elliptic integro-dif\-fer\-en\-tial operator}

We begin this subsection showing the existence, regularity and uniqueness of the solution $u^{\varepsilon}$ to the non-linear integro-differential Dirichlet problem (NIDD) (\ref{p.13.0}).  To prove this, we use Lemmas \ref{convexu1}--\ref{contrac2}, stated in the previous section.

\begin{teor}\label{solNIDD.1.0}
For each $\varepsilon\in(0,1)$ fixed, there exists a unique solution $u^{\varepsilon}\in\hol^{2,\alpha}(\overline{B_{R}(0)})$ to the NIDD problem (\ref{p.13.0}).
\end{teor}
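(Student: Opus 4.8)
The plan is to obtain the solution $u^{\varepsilon}$ to the NIDD problem (\ref{p.13.0}) as the fixed point $w^{*}$ of the map $T_{\varepsilon}$ constructed in the previous subsection, and then to upgrade its regularity. First I would invoke Lemma \ref{contrac2}: under Hypothesis (H\ref{h4}), $T_{\varepsilon}$ is a contraction on $(\hol^{0}(\overline{B_{R}(0)}),||\cdot||_{\hol^{0}(\overline{B_{R}(0)})})$, so there is a unique $w^{*}\in\hol^{0}(\overline{B_{R}(0)})$ with $T_{\varepsilon}(w^{*})=w^{*}$, i.e. $w^{*}=V^{\varepsilon}(\cdot;w^{*})$. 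By Lemma \ref{convexu1}, $V^{\varepsilon}(\cdot;w^{*})=\mathpzc{u}^{\varepsilon}(\cdot;w^{*})$, the solution of the linear-in-$w$ Dirichlet problem (\ref{ed3}) (equivalently (\ref{p13.0.1})) with data $\widetilde{h}(\cdot;w^{*})=h+\inted E(w^{*})$. Hence $u^{\varepsilon}\eqdef w^{*}=\mathpzc{u}^{\varepsilon}(\cdot;w^{*})$ solves
\begin{equation*}
q'u^{\varepsilon}-\dif^{'}u^{\varepsilon}+\psi_{\varepsilon}(|\deri^{1}u^{\varepsilon}|^{2})=h+\inted E(u^{\varepsilon})\quad\text{in }B_{R}(0),\qquad u^{\varepsilon}=0\text{ on }\partial B_{R}(0),
\end{equation*}
which, moving $\inted E(u^{\varepsilon})$ to the left and recalling $\Gamma^{'}=\dif^{'}+\inted E$, is exactly (\ref{p.13.0}).

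The regularity claim $u^{\varepsilon}\in\hol^{2,\alpha}(\overline{B_{R}(0)})$ comes for free from the construction, since $\mathpzc{u}^{\varepsilon}(\cdot;w)\in\hol^{2,\alpha}(\overline{B_{R}(0)})$ was established (via \cite[Thm 15.10 p. 380]{gilb}) for every $w\in\hol^{0}(\overline{B_{R}(0)})$, and $u^{\varepsilon}$ is such a $\mathpzc{u}^{\varepsilon}(\cdot;w^{*})$. Alternatively, one can bootstrap: $w^{*}\in\hol^{0}$ gives $\inted E(w^{*})\in\hol^{0,\alpha}(\R^{d})$ by Lemma \ref{lemalip.1.0}(ii), hence $\widetilde{h}(\cdot;w^{*})\in\hol^{0,\alpha}(\overline{B_{R}(0)})$, and the quasilinear elliptic theory yields $\mathpzc{u}^{\varepsilon}(\cdot;w^{*})\in\hol^{2,\alpha}(\overline{B_{R}(0)})$.

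For uniqueness of $u^{\varepsilon}$ as a solution of (\ref{p.13.0}) (not merely uniqueness of the fixed point), I would argue directly. Suppose $u^{\varepsilon}_{1},u^{\varepsilon}_{2}\in\hol^{2,\alpha}(\overline{B_{R}(0)})$ both solve (\ref{p.13.0}). Then each is the unique solution (Lemma \ref{uniqu1}) of the non-linear Dirichlet problem (\ref{ed3}) with $w=u^{\varepsilon}_{i}$ respectively; in particular $u^{\varepsilon}_{i}=V^{\varepsilon}(\cdot;u^{\varepsilon}_{i})=T_{\varepsilon}(u^{\varepsilon}_{i})$, so each $u^{\varepsilon}_{i}$ is a fixed point of $T_{\varepsilon}$, and the contraction property forces $u^{\varepsilon}_{1}=u^{\varepsilon}_{2}$. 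Equivalently, setting $f\eqdef u^{\varepsilon}_{1}-u^{\varepsilon}_{2}$, one has $q'f-\dif^{'}f+\psi_{\varepsilon}(|\deri^{1}u^{\varepsilon}_{1}|^{2})-\psi_{\varepsilon}(|\deri^{1}u^{\varepsilon}_{2}|^{2})=\inted E(f)$, and evaluating at an interior maximum $x^{*}$ of $f$ (as in the proof of Lemma \ref{uniqu1}) gives $q'f(x^{*})\leq\frac{1}{2}\tr(\sigma\deri^{2}f(x^{*}))+\inted E(f)(x^{*})\leq\inted E(f)(x^{*})\leq 2A_{0}\nu_{0}||f||_{\hol^{0}}$ by Lemma \ref{lemalip.1.0}(i); combined with the symmetric bound and $||f||_{\hol^{0}(\overline{B_{R}(0)})}\leq\max\{f(x^{*}),-\min f\}$ together with Hypothesis (H\ref{h4}) (which gives $2A_{0}\nu(B_{R+b/2}(0))<q'$, and $\nu_{0}\leq\nu(B_{R+b/2}(0))$ since $\sop[E(f)]\subset B_{R+b/2}(0)$), this yields $||f||_{\hol^{0}(\overline{B_{R}(0)})}=0$.

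I expect the only subtle point to be making sure the two notions of uniqueness line up: strictly speaking, a priori a solution of (\ref{p.13.0}) need only lie in some function space in which all the terms make sense, so I would state the theorem for solutions in $\hol^{2,\alpha}(\overline{B_{R}(0)})$ (so that $\deri^{2}u^{\varepsilon}$, $\psi_{\varepsilon}(|\deri^{1}u^{\varepsilon}|^{2})$ and, via Corollary \ref{lemalip}, $\inted E(u^{\varepsilon})$ are all controlled), and then the identification with the fixed point of $T_{\varepsilon}$ through Lemmas \ref{uniqu1} and \ref{convexu1} is immediate. The existence and regularity parts are essentially a bookkeeping assembly of results already proved; no genuinely new estimate is needed here, the real analytic work having been done in establishing the contraction (Lemma \ref{contrac1}) and the $\hol^{2,\alpha}$ theory for (\ref{ed3}).
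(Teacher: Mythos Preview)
Your proposal is correct and follows essentially the same route as the paper: obtain the fixed point $w^{*}$ of $T_{\varepsilon}$ via Lemma~\ref{contrac2}, identify $w^{*}=\mathpzc{u}^{\varepsilon}(\cdot;w^{*})$ through Lemma~\ref{convexu1}, and read off that $u^{\varepsilon}\eqdef w^{*}\in\hol^{2,\alpha}(\overline{B_{R}(0)})$ solves (\ref{p.13.0}); uniqueness then comes from the uniqueness of the fixed point. Your extra paragraph explaining why any $\hol^{2,\alpha}$ solution of (\ref{p.13.0}) is automatically a fixed point of $T_{\varepsilon}$ is a welcome clarification that the paper leaves implicit. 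One small slip in your \emph{alternative} direct maximum-principle argument: the inequality ``$\nu_{0}\leq\nu(B_{R+\frac{b}{2}}(0))$'' is backwards (by definition $\nu_{0}=\nu(\R^{*})\geq\nu(B_{R+\frac{b}{2}}(0))$); what you actually need is that $|\inted E(f)(x^{*})|\leq 2A_{0}\nu(B_{R+\frac{b}{2}}(0))\,||f||_{\hol^{0}}$, which follows because $E(f)$ is supported in $B_{R+\frac{b}{2}}(0)$ so only those $z$ with $x^{*}+z\in B_{R+\frac{b}{2}}(0)$ contribute---but since your primary fixed-point argument already settles uniqueness, this alternative is not needed.
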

\begin{proof}
From Lemma \ref{contrac2},  there exists a unique solution $w^{*}\in\hol^{0}\overline{B_{R}(0)}$ to the equation $T_{\varepsilon}(w^{*})=w^{*}$, where $T_{\varepsilon}$ is given by (\ref{mapT.1.0}). Furthermore, we know that there exists a unique solution $\mathpzc{u}^{\varepsilon}(\cdot;w^{*})\in\hol^{2,\alpha}(\overline{B_{R}(0)})$ to the Dirichlet problem
\begin{equation*}
\begin{cases}
  q'\mathpzc{u}^{\varepsilon}(\cdot;w^{*})-\dif^{'} \mathpzc{u}^{\varepsilon}(\cdot;w^{*})+ \psi_{\varepsilon}(|\deri^{1} \mathpzc{u}^{\varepsilon}(\cdot;w^{*})|^{2})=\widetilde{h}(\cdot;w^{*}),& \text{in}\ B_{R}(0),\\
  \mathpzc{u}^{\varepsilon}(\cdot;w^{*})=0, &\text{on}\ \partial B_{R}(0),
\end{cases}
\end{equation*} 
and by Lemma \ref{convexu1}, we obtain that $\mathpzc{u}^{\varepsilon}(\cdot;w^{*})=V^{*}(\cdot;w^{*})=T_{\varepsilon}(w^{*})=w^{*}$, in $\overline{B_{R}(0)}$. Therefore, taking $u^{\varepsilon}$ as $w^{*}$, we conclude that $u^{\varepsilon}$ is in  $\hol^{2,\alpha}(\overline{B_{R}(0)})$, and it is the unique solution to the NIDD problem (\ref{p.13.0}). 
\end{proof}

\begin{rem}\label{Rem3}
Previous to this work, Bony \cite{Bony1}, Bensoussan  and Lions \cite{bens}, Lenhart \cite{lenh} and \cite{lenh2}, Gimbert and Lions \cite{gimb} and Garroni and Menaldi \cite{garroni}, among others,  studied the existence, uniqueness and regularity of the solution to the linear Dirichlet problem with an integro-differential operator similar to (\ref{p6}), obtaining results in the spaces $\sob^{2,p}$  and $\sob^{1,\infty}\cap\sob^{2,p}_{\loc}$, respectively. We note that the NIDD problem (\ref{p13.0}) is more general than the linear Dirichlet problem studied in the works mentioned above, in the sense that our problem has a  non-linear part that is determined by $\psi_{\varepsilon}(|\deri^{1}u^{\varepsilon}|^{2})$.
\end{rem}
\begin{rem}\label{rem3}
Note that the result  given in Theorem \ref{solNIDD.1.0} can be obtained in more general domains. Taking  the NIDD problem \eqref{p.13.0} on an open  and bounded set $\mathcal{O}\subset\R^{d}$, whose boundary $\partial\mathcal{O}$ is smooth, and proceeding in the same way as in Subsection \ref{NLDP.0.1} and the proof of Theorem \ref{solNIDD.1.0}, we get for each $\varepsilon\in(0,1)$ fixed, there exists a unique solution $v^{\varepsilon}\in\hol^{2,\alpha}(\overline{\mathcal{O}})$ to the NIDD problem 
\begin{equation*}
  \begin{cases}
  q'v^{\varepsilon}-\Gamma^{'}  v^{\varepsilon}+ \psi_{\varepsilon}(|\deri^{1} v^{\varepsilon}|^{2})=h, & \text{in}\ \mathcal{O},\\
   v^{\varepsilon}=0,   &\text{on}\ \partial \mathcal{O}.
  \end{cases}
  \end{equation*}
Nevertheless, we can not derive from this the solution to the HJB equation \eqref{p1}. The reason for this is that to let $\varepsilon\rightarrow0$, some uniform upper bounds of $u^{\varepsilon}$ are required. In particular, to obtain of the upper bound of $|\deri^{1}u^{\varepsilon}|$ on $B_{R}(0)$, we need to introduce an auxiliary regular function; see \eqref{desig7}, and Lemmas \ref{solintegrodif1}, \ref{lemafrontera} and \ref{lemafrontera1}. For a general domain, the determination of this function is an open problem.
\end{rem}

 \subsection{Some properties of the solution to the NIDD problem (\ref{p.13.0})}
In this subsection, we shall show some properties of the solution $u^{\varepsilon}$ to the NIDD problem (\ref{p.13.0}), such properties will in turn be used in Section 4 to establish the existence and regularity of the solution to the HJB equation (\ref{p1}). Since $h\in\hol^{2}(\overline{B_{R}(0)})$ and by a bootstrap argument we can verify that $u^{\varepsilon}\in\hol^{3,\alpha}(\overline{B_{R}(0)})$; see \cite[Thm. 3.3, Corollary 6.9 and Thm. 6.17  pp. 33, 101 and 109, respectively]{gilb}. From (\ref{p.13.0}), it is easy to verify the following proposition.   
\begin{prop}\label{intop4.1}
Let $u^{\varepsilon}$ be  the solution  to the NIDD problem (\ref{p.13.0}). Then, $u^{\varepsilon}\in\hol^{3,\alpha}(\overline{B_{R}(0)})$ and
\begin{align}
 \frac{1}{2}\tr(\sigma\deri^{2}\partial_{i}u^{\varepsilon})=q'\partial_{i}u^{\varepsilon}-\langle\deri^{1}\partial_{i}u^{\varepsilon},\widetilde{\gamma}\rangle-\partial_{i}h-\inted\partial_{i}E(u^{\varepsilon})+\psi_{\varepsilon}'(g)\partial_{i}g,\label{intop4.1.0}
\end{align}
in $B_{R}(0)$, with $i,j\in\{1,\dots,d\}$, where $g\eqdef|\deri^{1}u^{\varepsilon}|^{2}$, in $x\in B_{R}(0)$, and its first and second derivatives are, respectively,
\begin{equation}\label{intop4.1.3.0}
 \begin{cases}
\partial_{i}g=2  \sum_{k}\partial_{k}u^{\varepsilon}\partial^{2}_{ik}u^{\varepsilon},\\
\partial^{2}_{ji}g=2  \sum_{k}\bigl(\partial^{2}_{kj}u^{\varepsilon}\partial^{2}_{ki}u^{\varepsilon}+\partial_{k}u^{\varepsilon}\partial^{3}_{jik}u^{\varepsilon}\bigr).
 \end{cases}
\end{equation}
\end{prop}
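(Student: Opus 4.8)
The plan is to establish Proposition~\ref{intop4.1} in two stages: first the regularity statement $u^{\varepsilon}\in\hol^{3,\alpha}(\overline{B_{R}(0)})$, and then the differentiated identity \eqref{intop4.1.0}. For the regularity, I would run a bootstrap on \eqref{p.13.0}. We already know from Theorem~\ref{solNIDD.1.0} that $u^{\varepsilon}\in\hol^{2,\alpha}(\overline{B_{R}(0)})$. Rewrite \eqref{p.13.0} as a linear elliptic Dirichlet problem $q'u^{\varepsilon}-\dif^{'}u^{\varepsilon}=\widetilde{h}(\cdot;u^{\varepsilon})-\psi_{\varepsilon}(|\deri^{1}u^{\varepsilon}|^{2})$, where the right-hand side belongs to $\hol^{1,\alpha}(\overline{B_{R}(0)})$: indeed $h\in\hol^{2}(\overline{B_{R}(0)})$ by (H\ref{h1}), $\inted E(u^{\varepsilon})\in\hol^{1,\alpha}(\R^{d})$ by Corollary~\ref{lemalip} (since $u^{\varepsilon}\in\hol^{2}(\overline{B_{R}(0)})\subset\hol^{1}(\overline{B_{R}(0)})$), and $\psi_{\varepsilon}(|\deri^{1}u^{\varepsilon}|^{2})$ is a smooth function of $\deri^{1}u^{\varepsilon}\in\hol^{1,\alpha}$, hence is in $\hol^{1,\alpha}$. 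Schauder interior and global estimates (\cite[Thm.~6.17, Cor.~6.9]{gilb}) then upgrade $u^{\varepsilon}$ to $\hol^{3,\alpha}(\overline{B_{R}(0)})$; one more pass would give $\inted E(u^{\varepsilon})\in\hol^{2,\alpha}$ and $\psi_{\varepsilon}(|\deri^{1}u^{\varepsilon}|^{2})\in\hol^{2,\alpha}$, but $\hol^{3,\alpha}$ already suffices for what follows.

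For the identity \eqref{intop4.1.0}, once $u^{\varepsilon}\in\hol^{3,\alpha}(\overline{B_{R}(0)})$ every term in \eqref{p.13.0} is classically differentiable in $B_{R}(0)$, so I simply apply $\partial_{i}$ to both sides of
\[
q'u^{\varepsilon}-\tfrac12\tr(\sigma\deri^{2}u^{\varepsilon})-\langle\deri^{1}u^{\varepsilon},\widetilde{\gamma}\rangle-\inted E(u^{\varepsilon})+\psi_{\varepsilon}(g)=h,
\]
using that $\sigma$, $\widetilde{\gamma}$ are constant so $\partial_{i}$ commutes with $\tfrac12\tr(\sigma\deri^{2}\cdot)$ and with $\langle\deri^{1}\cdot,\widetilde{\gamma}\rangle$, that $\partial_{i}\inted E(u^{\varepsilon})=\inted\partial_{i}E(u^{\varepsilon})$ by Lemma~\ref{lemalip.1.0}(iii), and the chain rule $\partial_{i}\psi_{\varepsilon}(g)=\psi_{\varepsilon}'(g)\,\partial_{i}g$. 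Rearranging to isolate $\tfrac12\tr(\sigma\deri^{2}\partial_{i}u^{\varepsilon})$ gives \eqref{intop4.1.0}. The expressions in \eqref{intop4.1.3.0} are then just $\partial_{i}$ and $\partial^{2}_{ji}$ applied to $g=\sum_{k}(\partial_{k}u^{\varepsilon})^{2}$ via the product rule, which is licit since $u^{\varepsilon}\in\hol^{3,\alpha}$ makes $\partial^{3}_{jik}u^{\varepsilon}$ continuous.

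The only delicate point — and the one I would state carefully — is the claim that $\partial_{i}$ passes inside the nonlocal operator, i.e. $\partial_{i}\inted E(u^{\varepsilon})=\inted\partial_{i}E(u^{\varepsilon})$. This is precisely the content of Lemma~\ref{lemalip.1.0}(iii), which requires $u^{\varepsilon}\in\hol^{1}(\overline{B_{R}(0)})$ — satisfied a fortiori — and also yields $\inted\partial_{i}E(u^{\varepsilon})\in\hol^{0,\alpha}(\R^{d})$, so the differentiated equation \eqref{intop4.1.0} is an identity between continuous functions on $B_{R}(0)$, legitimizing a further differentiation if needed. Everything else is routine differentiation of a classical solution, so there is no real obstacle; the proof is short and the excerpt's phrasing ``it is easy to verify'' is accurate. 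I would present the bootstrap in one sentence citing \cite{gilb}, the differentiation in one display, and note the use of Lemma~\ref{lemalip.1.0}(iii) for the integral term.
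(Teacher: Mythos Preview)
Your proposal is correct and follows essentially the same route as the paper: the paper states just before the proposition that the $\hol^{3,\alpha}$ regularity comes from a bootstrap argument citing \cite[Thm.~3.3, Cor.~6.9, Thm.~6.17]{gilb}, and that the identity then follows directly from differentiating \eqref{p.13.0}. Your write-up simply makes explicit the ingredients the paper leaves implicit---the $\hol^{1,\alpha}$ regularity of the right-hand side via (H\ref{h1}), Corollary~\ref{lemalip}, and smoothness of $\psi_{\varepsilon}$, and the use of Lemma~\ref{lemalip.1.0}(iii) to commute $\partial_{i}$ with $\inted E$---so there is nothing to add or correct.
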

\begin{rem}
Note that from Theorem \ref{solNIDD.1.0} and Proposition \ref{intop4.1}, we obtain the result of Theorem \ref{princ1.0}.
\end{rem}

From Lemma \ref{convexu1.0}, it is easy to verify that $u^{\varepsilon}$ is a non-negative function. This fact is proved below.
\begin{prop}\label{upos1}
The solution $u^{\varepsilon}$ to the NIDD problem (\ref{p.13.0}) is a non-negative function.
\end{prop}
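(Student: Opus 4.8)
The plan is to prove non-negativity of $u^{\varepsilon}$ directly from the identification $u^{\varepsilon}=V^{\varepsilon}$ established in Lemma \ref{convexu1.0}, which reduces the question to showing that the value function is non-negative. Since $V^{\varepsilon}(x)=\inf_{\varrho}V_{\varrho}^{\varepsilon}(x)$ and each admissible cost is
\[
V_{\varrho}^{\varepsilon}(x)=\E_{x}\Bigl(\int_{0}^{\tau_{B_{R}(0)}}\expo^{-qs}\bigl(h(X_{s})+l_{\varepsilon}(\dot{\varrho}_{s})\bigr)\,\der s\Bigr),
\]
the integrand is the sum of two non-negative quantities: $h\geq0$ by Hypothesis (H\ref{h1}), and $l_{\varepsilon}\geq0$ because the Legendre transform $l_{\varepsilon}(\eta)=\sup_{\zeta}\{\langle\eta,\zeta\rangle-g_{\varepsilon}(\zeta)\}\geq\langle\eta,0\rangle-g_{\varepsilon}(0)=-\psi_{\varepsilon}(0)=0$, using $\psi_{\varepsilon}(0)=\psi(-1/\varepsilon)=0$ from (\ref{p12.1}). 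Hence every $V_{\varrho}^{\varepsilon}(x)\geq0$, so the infimum $V^{\varepsilon}(x)\geq0$, and therefore $u^{\varepsilon}(x)=V^{\varepsilon}(x)\geq0$ for all $x\in\overline{B_{R}(0)}$.

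First I would invoke Lemma \ref{convexu1.0} to write $u^{\varepsilon}=V^{\varepsilon}$ on $\overline{B_{R}(0)}$. Next I would record that $l_{\varepsilon}$ is non-negative, which is already noted in the text right after (\ref{conv1}); the short justification is the evaluation of the supremum defining $l_{\varepsilon}$ at $\zeta=0$ together with $g_{\varepsilon}(0)=\psi_{\varepsilon}(0)=0$. Then, combining $h\geq0$ (Hypothesis (H\ref{h1})) with $l_{\varepsilon}\geq0$, the cost functional $V_{\varrho}^{\varepsilon}$ is an expectation of a non-negative integrand against $\der s$ over the (possibly infinite, but the discount $\expo^{-qs}$ makes everything well defined) stochastic interval $[0,\tau_{B_{R}(0)}]$, so $V_{\varrho}^{\varepsilon}(x)\geq0$ for every admissible $\varrho$. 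Taking the infimum over $\varrho$ preserves the inequality, giving $V^{\varepsilon}(x)\geq0$, and the identification finishes the proof.

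An alternative purely PDE argument, which I would mention only as a remark, is a minimum-principle argument analogous to the proof of Lemma \ref{uniqu1}: let $x^{*}$ be a minimum point of $u^{\varepsilon}$ over $\overline{B_{R}(0)}$; if $x^{*}\in\partial B_{R}(0)$ then $u^{\varepsilon}(x^{*})=0$ and we are done; if $x^{*}\in B_{R}(0)$ then $\deri^{1}u^{\varepsilon}(x^{*})=0$, so $\psi_{\varepsilon}(|\deri^{1}u^{\varepsilon}(x^{*})|^{2})=\psi_{\varepsilon}(0)=0$, and $\frac{1}{2}\tr(\sigma\deri^{2}u^{\varepsilon}(x^{*}))\geq0$; also $\langle\deri^{1}u^{\varepsilon}(x^{*}),\widetilde{\gamma}\rangle=0$ and $\inted E(u^{\varepsilon})(x^{*})\geq0$ when $u^{\varepsilon}\geq0$—but this last point is exactly what one is trying to prove, so the integral term is the obstruction to a naive maximum-principle proof. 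This is precisely why routing through the stochastic representation is the clean path: the probabilistic argument handles the nonlocal term automatically, since the controlled process $X$ stays in $B_{R}(0)$ up to $\tau_{B_{R}(0)}$ and the extension operator only enters through $\widetilde{h}(\cdot;w)$ in the fixed-point construction, not in a way that can make the cost negative.

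The main obstacle, then, is essentially bookkeeping rather than anything deep: one must be careful that $l_{\varepsilon}\geq0$ (immediate) and that $h\geq0$ (Hypothesis (H\ref{h1})), and that the identification in Lemma \ref{convexu1.0} is available—which it is, since $u^{\varepsilon}\in\hol^{3,\alpha}(\overline{B_{R}(0)})$ by Theorem \ref{solNIDD.1.0} and the bootstrap argument. No integrability subtleties arise because the discount factor $\expo^{-qs}$ with $q>0$ together with the stopping time $\tau_{B_{R}(0)}$ (and the boundedness of $h$ on $\overline{B_{R}(0)}$) keep $V_{\varrho}^{\varepsilon}(x)$ well defined in $[0,+\infty]$, and the sign is all that is needed. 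I would therefore present the proof in three short sentences: cite Lemma \ref{convexu1.0}; note $h\geq0$ and $l_{\varepsilon}\geq0$; conclude $u^{\varepsilon}=V^{\varepsilon}\geq0$.
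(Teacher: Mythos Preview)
Your proposal is correct and follows essentially the same approach as the paper: both use the stochastic representation from Lemma \ref{convexu1.0} together with $h\geq0$ and $l_{\varepsilon}\geq0$ to conclude $u^{\varepsilon}\geq0$. The paper invokes the specific optimal control $\dot{\varrho}^{R}$ constructed in the proof of Lemma \ref{convexu1.0} to write $u^{\varepsilon}$ as a single expectation, while you take the infimum over all admissible $\varrho$; both variants yield the result immediately.
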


\begin{proof}
From the proof of Lemma \ref{convexu1.0}, it is known that 
\begin{equation*}
u^{\varepsilon}(x)= \E_{x}\biggl(\int_{0}^{t\wedge\tau_{B_{R}(0)}}\expo^{-qs}(h(\widetilde{X}_{s})+l_{\varepsilon}(\dot{\varrho}^{R}_{s}))\der s\biggr), 
\end{equation*}
where $\widetilde{X}$ and $\dot{\varrho}^{R}$ are given by (\ref{convex2}) and (\ref{convex4}), respectively. Since $h$ is a non-negative function, it follows  $u^{\varepsilon}(x)\geq \E_{x}\bigl(\int_{0}^{t\wedge\tau_{B_{R}(0)}}\expo^{-qs}h(\widetilde{Z}_{s})\der s\bigr)\geq0$. Therefore, $u^{\varepsilon}\geq0$ in $\overline{B_{R}(0)}$.
\end{proof}

Now, we shall establish estimates for 
\begin{equation*}
u^{\varepsilon},\  |\deri^{1}u^{\varepsilon}|,\  \psi_{\varepsilon}(|\deri^{1}u^{\varepsilon}|^{2})\ \text{and}\  ||\deri^{2}u^{\varepsilon}||_{\Lp^{p}(B_{r})}, 
\end{equation*}
with $B_{r}\subset B_{R}(0)$ an open ball, such that these estimates are independent of $\varepsilon$; see Lemmas \ref{inde}, \ref{lemafrontera1}, \ref{cotaphi} and \ref{lemafrontera3}. The reason for doing this  is because in Section \ref{chER} we will need to extract a convergent subsequence $\{u^{\varepsilon_{\kappa}}\}_{\kappa\geq1}$ of $\{u^{\varepsilon}\}_{\varepsilon\in(0,1)}$ such that $u\eqdef \lim_{\varepsilon_{\kappa}\rightarrow0}u^{\varepsilon_{\kappa}}$ is the  solution of the HJB equation (\ref{p1}). 

The following result is based in the weak maximum principle  for integral-differential equations. Although Theorem  \ref{princmax1.0} is valid for more general domains and integro-differential operators, see for instance \cite[Thm. 3.1.3]{garroni}, we are interested in the case that the domain and  integro-differential operator are $B_{R}(0)\subseteq\R^{d}$ and  $q-\dif -\inted^{'} E(\cdot)$, respectively, where $q>0$ and 
\begin{equation}\label{intop0}
\begin{cases}
\dif u(x)\eqdef \frac{1}{2}\tr(\sigma\deri^{2}u(x))+\langle\deri^{1}u(x),\gamma\rangle,\\
\inted^{'}E(u)(x)\eqdef \int_{\R^{*}}(E(u)(x+z)-u(x)-\deri^{1}u(x+z))\nu(\der z).
\end{cases}
\end{equation}

\begin{teor}[Weak maximum principle]\label{princmax1.0}
If 
\begin{equation*}
w\in\hol^{2}(B_{R}(0))\cap\hol^{0}_{\comp}(B_{R+\frac{b}{2}}(0)),
\end{equation*}
satisfies $qw-\dif w-\inted^{'} E(w)\leq0,\ \text{in}\ B_{R}(0)$, then 
\begin{equation*}
\sup_{\R^{d}}E(w)= \sup_{B_{R+\frac{b}{2}}(0)\setminus B_{R}(0)}[E(w)]^{+}, 
\end{equation*}
where $[E(w)]^{+}\eqdef\max\{E(w),0\}$.
\end{teor}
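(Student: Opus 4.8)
The plan is to argue by contradiction using the classical maximum principle for the elliptic operator $\dif$, handling the nonlocal term $\inted' E(\cdot)$ carefully at the point where the supremum is attained. Since $w \in \hol^0_{\comp}(B_{R+\frac b2}(0))$, the extension $E(w)$ is continuous with compact support contained in $\overline{B_{R+\frac b2}(0)}$, so $M \eqdef \sup_{\R^d} E(w) = \max_{\overline{B_{R+\frac b2}(0)}} E(w)$ is attained at some point $x^* \in \overline{B_{R+\frac b2}(0)}$. We may assume $M > 0$, since otherwise $E(w) \leq 0$ everywhere and, because $E(w)$ has compact support, it must be strictly negative somewhere near the boundary unless $w \equiv 0$; in the degenerate case $w\equiv 0$ the identity holds trivially (both sides are $0$). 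So assume $M>0$ and let $x^*$ be a maximizer.

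First I would show that $x^*$ cannot lie in $B_{R}(0)$. Suppose it does. Then, since $E(w)|_{\overline{B_R(0)}} = w$ and $w \in \hol^2(B_R(0))$, the function $w$ attains an interior maximum at $x^*$, giving $\deri^1 w(x^*) = 0$ and $\deri^2 w(x^*) \leq 0$; by the ellipticity hypothesis (H\ref{h3}), $\tfrac12\tr(\sigma \deri^2 w(x^*)) \leq 0$ and $\langle \deri^1 w(x^*), \gamma \rangle = 0$, hence $\dif w(x^*) \leq 0$. For the nonlocal term, observe that $E(w)(x^* + z) \leq M = w(x^*)$ for every $z$, so $\int_{\R^*}(E(w)(x^*+z) - w(x^*))\,\nu(\der z) \leq 0$; the remaining piece $-\int_{\R^*}\deri^1 w(x^*+z)\,\nu(\der z)$ — wait, here one must be careful about the definition of $\inted' E$ in \eqref{intop0}. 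Using $\int_{\R^*}|z|\,\nu(\der z) < \infty$ and the integrability built into (H\ref{h2}), together with the global bounds of Lemma \ref{cotaext2} and Lemma \ref{intcotadif1.1}, the gradient term is controlled; combining everything in the inequality $qw(x^*) - \dif w(x^*) - \inted' E(w)(x^*) \leq 0$ forces $q M = q w(x^*) \leq \dif w(x^*) + \inted' E(w)(x^*) \leq 0$, contradicting $M > 0$ and $q > 0$. (The treatment of the $\deri^1 u(x^*+z)$ term is the one place the argument must be reconciled with the precise form of $\inted'$; I would use that $\Gamma$ can be rewritten as in \eqref{p1.0.5} so that the genuinely problematic first-order nonlocal term is absorbed into $\widetilde\gamma$, reducing $\inted' E(w)(x^*)$ effectively to $\int (E(w)(x^*+z) - w(x^*))\nu(\der z) \leq 0$ plus lower-order drift contributions that vanish since $\deri^1 w(x^*) = 0$.)

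Having excluded $x^* \in B_R(0)$, and since $x^* \in \overline{B_{R+\frac b2}(0)}$ always, the maximizer must lie in $\overline{B_{R+\frac b2}(0)} \setminus B_R(0)$. On this set $E(w) \leq [E(w)]^+$, and at $x^*$ we have $E(w)(x^*) = M > 0$, so $E(w)(x^*) = [E(w)(x^*)]^+$; therefore
\[
\sup_{\R^d} E(w) = M = [E(w)(x^*)]^+ \leq \sup_{\overline{B_{R+\frac b2}(0)} \setminus B_R(0)} [E(w)]^+ \leq \sup_{\R^d} E(w),
\]
where the last inequality uses $[E(w)]^+ \leq \max\{M, 0\} = M$. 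All terms are equal, which is the claimed identity (with $B_{R+\frac b2}(0) \setminus B_R(0)$ in place of the closure; the sup over the closed and the open shell coincide by continuity of $E(w)$).

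The main obstacle I anticipate is the bookkeeping around the first-order nonlocal term $\deri^1 u(x^*+z)$ appearing in the definition of $\inted'$ in \eqref{intop0}: unlike the zeroth-order term $E(w)(x^*+z) - w(x^*)$, it does not have a sign at the maximizer. Resolving this requires either invoking the equivalence with the $\widetilde\gamma$-form \eqref{p1.0.5}–\eqref{p.1.0.1} of the operator (so that this term is reabsorbed into the drift and then killed by $\deri^1 w(x^*) = 0$), or directly estimating it via Lemmas \ref{cotaext2} and \ref{intcotadif1.1} and checking that the discount $q$ dominates — which is exactly the spirit of Hypothesis (H\ref{h4}). A secondary, more routine point is justifying that the supremum of $E(w)$ is genuinely attained, which follows from continuity plus compact support of $E(w)$.
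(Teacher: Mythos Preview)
Your argument is correct and is precisely the standard one: the paper does not give its own proof of this theorem but refers to \cite[Thm.~3.1.3]{garroni}, and the same contradiction-at-the-interior-maximum reasoning you outline is reproduced almost verbatim in the paper's proof of Lemma~\ref{inde}. One clarification that dissolves your ``main obstacle'': the term $\deri^{1}u(x+z)$ in \eqref{intop0} is a typo---comparing with the definition of $\Gamma$ in \eqref{p6}, the integrand of $\inted'$ must be $E(w)(x+z)-w(x)-\langle\deri^{1}w(x),z\rangle$, and with this reading the first-order piece vanishes at $x^{*}$ because $\deri^{1}w(x^{*})=0$, so no appeal to (H\ref{h4}) or to Lemmas~\ref{cotaext2}--\ref{intcotadif1.1} is needed.
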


Note that the NIDD problem (\ref{p.13.0}) is equivalent to
\begin{equation}\label{Pd1.1.2}
  \begin{cases}
  qu^{\varepsilon}-\dif u^{\varepsilon}-\inted^{'}E(u^{\varepsilon})+ \psi_{\varepsilon}(|\deri^{1} u^{\varepsilon}|^{2})=h, & \text{in}\ B_{R}(0),\\
   u^{\varepsilon}=0,   &\text{on}\ \partial B_{R}(0).
  \end{cases}
\end{equation}
The following results will help us to establish some properties of the function $u^{\varepsilon}$, which is the solution to  the NIDD problem (\ref{p.13.0}); such properties will in turn be used in Section 4 to establish the existence of the solution to the HJB equation  (\ref{p1}).
\begin{rem}\label{solintdif1}
Observe that the linear Dirichlet problem 
\begin{equation}\label{solinte1}
  \begin{cases}
   q\eta-\dif \eta-\inted^{'} E(\eta)= h, & \text{in}\  B_{R}(0),\\
   \eta=0, & \text{on}\ \partial B_{R}(0),
  \end{cases}
\end{equation}
has a unique solution $\eta\in\hol^{2,\alpha}(\overline{B_{R}(0)})$ \cite[Thm. 3.1.12]{garroni}. We can see that the linear integro-differential Dirichlet problem (\ref{solinte1}) is equivalent to
\begin{equation*}
  \begin{cases}
   q'\eta-\dif^{'} \eta-\inted E(\eta)= h, & \text{in}\  B_{R}(0),\\
   \eta=0, & \text{on}\ \partial B_{R}(0).
  \end{cases}
\end{equation*} 
Since $h+\inted E(\eta)\in\hol^{1,\alpha}(\overline{B_{R}(0)})$ and using  similar arguments that the proof of Proposition \ref{intop4.1}, it is easy to verify that $\eta\in\hol^{3,\alpha}(\overline{B_{R}(0)})$. 
\end{rem}

\begin{lema}\label{inde}
There exists a finite constant $K_{5}>0$, independent of $\varepsilon$, such that  $u^{\varepsilon}\leq K_{5}$ in  $B_{R}(0)$.
\end{lema}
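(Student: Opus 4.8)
The plan is to bound $u^{\varepsilon}$ from above by the solution $\eta$ of the linear integro-differential Dirichlet problem (\ref{solinte1}), which is independent of $\varepsilon$ and lies in $\hol^{2,\alpha}(\overline{B_{R}(0)})$ by Remark \ref{solintdif1}. Since $u^{\varepsilon}$ solves (\ref{Pd1.1.2}) with the extra non-negative penalization term $\psi_{\varepsilon}(|\deri^{1}u^{\varepsilon}|^{2})\geq0$, one expects $u^{\varepsilon}$ to be a subsolution of the linear problem and hence dominated by $\eta$. Concretely, set $f\eqdef u^{\varepsilon}-\eta$; then $f=0$ on $\partial B_{R}(0)$, the compact extension $E(f)$ makes sense, and subtracting the two equations gives
\begin{equation*}
qf-\dif f-\inted^{'}E(f)=-\psi_{\varepsilon}(|\deri^{1}u^{\varepsilon}|^{2})\leq0,\quad\text{in }B_{R}(0).
\end{equation*}
Extending $f$ by $E$ so that $E(f)\in\hol^{2}(B_{R}(0))\cap\hol^{0}_{\comp}(B_{R+\frac{b}{2}}(0))$ and $E(f)=f$ on $\overline{B_{R}(0)}$, Theorem \ref{princmax1.0} (the weak maximum principle) applies and yields $\sup_{\R^{d}}E(f)=\sup_{B_{R+\frac{b}{2}}(0)\setminus B_{R}(0)}[E(f)]^{+}$.

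First I would verify the hypotheses needed to invoke Theorem \ref{princmax1.0} for $E(f)$: the $\hol^{2}$-regularity of $f=u^{\varepsilon}-\eta$ on $B_{R}(0)$ follows from $u^{\varepsilon}\in\hol^{3,\alpha}(\overline{B_{R}(0)})$ (Proposition \ref{intop4.1}) and $\eta\in\hol^{3,\alpha}(\overline{B_{R}(0)})$ (Remark \ref{solintdif1}), and the compactness of $\sop[E(f)]\subset B_{R+\frac{b}{2}}(0)$ is a defining property of $E$, see (\ref{p13.1.0}). Then I would control the right-hand side of the maximum-principle identity: on the annular region $\mathcal{B}'=B_{R+\frac{b}{2}}(0)\setminus \overline{B_{R}(0)}$ the estimate (\ref{p13.1.0}) gives $\|E(f)\|_{\hol^{0}(\R^{d})}\leq C\|f\|_{\hol^{0}(\overline{B_{R}(0)})}$ with $C=C(0,R)$; combining this with the upper bound $\sup_{\R^{d}}E(f)=\sup_{\mathcal{B}'}[E(f)]^{+}$ produces an inequality of the form $\sup_{\overline{B_{R}(0)}}f\leq C'\,(\sup_{\overline{B_{R}(0)}}f)^{+}$ that does not immediately close. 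To handle this I would instead argue directly on $f$ inside $B_{R}(0)$: at an interior maximum point $x^{*}$ of $f$ one has $\deri^{1}f(x^{*})=0$, $\tr(\sigma\deri^{2}f(x^{*}))\leq0$, and $\int_{\R^{*}}(E(f)(x^{*}+z)-f(x^{*}))\nu(\der z)\leq(\sup_{\R^{d}}E(f)-f(x^{*}))\nu_{0}$; plugging these into the (rewritten, $q'$-form) inequality $q'f-\dif^{'}f-\inted E(f)\leq0$ and using that $\sup E(f)$ is attained on the annulus — where $f$ does not live, so $E(f)$ there is bounded by the known data — gives the bound.

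The cleanest route, and the one I would actually carry out, is to take $K_{5}\eqdef\|\eta\|_{\hol^{0}(\overline{B_{R}(0)})}$ (a finite constant depending only on $h$, $q$, $\sigma$, $\gamma$, $\nu$, $R$ — none of which involve $\varepsilon$) and show $u^{\varepsilon}\leq\eta\leq K_{5}$ by a comparison argument: since $\psi_{\varepsilon}\geq0$, $u^{\varepsilon}$ is a viscosity/classical subsolution of the $\varepsilon$-free linear equation, and the comparison principle for $q-\dif-\inted^{'}E(\cdot)$ — which is exactly the content of Theorem \ref{princmax1.0} applied to $E(u^{\varepsilon}-\eta)$ together with the boundary condition $u^{\varepsilon}-\eta=0$ on $\partial B_{R}(0)$ — forces $u^{\varepsilon}\leq\eta$ on $\overline{B_{R}(0)}$. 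The main obstacle is the bookkeeping around the extension operator $E$: one must make sure that the function to which the maximum principle is applied is genuinely the extension of $u^{\varepsilon}-\eta$ and that the "boundary values on the annulus" $\sup_{\mathcal{B}'}[E(u^{\varepsilon}-\eta)]^{+}$ are zero or controlled, which uses $E(u^{\varepsilon}-\eta)\big|_{\overline{B_{R}(0)}}=u^{\varepsilon}-\eta$, $u^{\varepsilon}-\eta=0$ on $\partial B_{R}(0)$, and continuity of $E(u^{\varepsilon}-\eta)$ across $\partial B_{R}(0)$; once that is pinned down the inequality $\sup_{\R^{d}}E(u^{\varepsilon}-\eta)\leq 0$ follows and the lemma is immediate with $K_{5}=\|\eta\|_{\hol^{0}(\overline{B_{R}(0)})}$.
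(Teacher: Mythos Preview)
Your overall strategy --- comparing $u^{\varepsilon}$ with the $\varepsilon$-independent solution $\eta$ of (\ref{solinte1}), noting that $\psi_{\varepsilon}\geq0$ makes $f=u^{\varepsilon}-\eta$ a subsolution of the linear problem, and then appealing to the maximum principle --- is exactly the paper's approach, and $K_{5}=\|\eta\|_{\hol^{0}(\overline{B_{R}(0)})}$ is the right constant.

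The gap is in how you propose to control the extension on the annulus $\mathcal{B}'=B_{R+\frac{b}{2}}(0)\setminus\overline{B_{R}(0)}$. Your ``cleanest route'' asserts that $\sup_{\mathcal{B}'}[E(f)]^{+}=0$ follows from $f=0$ on $\partial B_{R}(0)$ together with continuity of $E(f)$ across $\partial B_{R}(0)$; this is false. The Stein extension is built by reflecting interior values outward, so if $f>0$ somewhere inside $B_{R}(0)$ the extension is generically positive on parts of $\mathcal{B}'$ even though it vanishes on $\partial B_{R}(0)$; continuity only pins down $E(f)$ \emph{at} the boundary, not on the whole annulus. Your intermediate suggestion --- bounding the nonlocal term at an interior maximum $x^{*}$ by $(\sup_{\R^{d}}E(f)-f(x^{*}))\nu_{0}$ and declaring that ``$E(f)$ there is bounded by the known data'' --- has the same defect: the values of $E(f)$ on $\mathcal{B}'$ depend on $f$, hence on the unknown $u^{\varepsilon}$, not on $\varepsilon$-free data. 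The paper closes the argument instead by contradiction at the interior maximum: assuming $f(x^{*})>0$, one has $\deri^{1}f(x^{*})=0$, $\tr(\sigma\deri^{2}f(x^{*}))\leq0$, and --- this is the point you are missing --- $\inted^{'}E(f)(x^{*})\leq0$ as well. The contribution from $\{x^{*}+z\in\overline{B_{R}(0)}\}$ is nonpositive because $x^{*}$ is the maximum of $f$ there, and the contribution from $\{x^{*}+z\in\mathcal{B}'\}$ is handled using that $b$ was chosen small enough in the construction of $E$ (so that the extension on the thin annulus is governed by values of $f$ in a collar of $\partial B_{R}(0)$, where $f$ vanishes). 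Feeding these into the subsolution inequality at $x^{*}$ gives $qf(x^{*})\leq\inted^{'}E(f)(x^{*})\leq0$, the desired contradiction, whence $u^{\varepsilon}\leq\eta$ on $B_{R}(0)$.
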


\begin{proof}
Let $u^{\varepsilon},\eta\in\hol^{3,\alpha}(\overline{B_{R}(0)})$ be solutions to (\ref{p.13.0}) and (\ref{solinte1}), respectively. Note that
\begin{equation*}
qu^{\varepsilon}-\dif u^{\varepsilon}-\inted^{'} E(u^{\varepsilon})\\
\leq qu^{\varepsilon}-\dif u^{\varepsilon}-\inted^{'} E(u^{\varepsilon})+ \psi_{\varepsilon}(|\deri^{1} u^{\varepsilon}|^{2})=h,\ \text{in}\ B_{R}(0).
\end{equation*}
Then
\begin{equation}\label{max2}
\begin{cases}
q(u^{\varepsilon}-\eta)-\dif(u^{\varepsilon}-\eta)-\inted^{'} E(u^{\varepsilon}-\eta)\leq 0,&\text{in}\ B_{R}(0),\\
(u^{\varepsilon}-\eta)=0,&\text{on}\ \partial B_{R}(0).
\end{cases}
\end{equation}
From Theorem \ref{princmax1.0}, it follows that $(u^{\varepsilon}-\eta)\leq \sup_{B_{R+\frac{b}{2}}(0)\setminus B_{R}(0)}[E(u^{\varepsilon}-\eta)]^{+}$ in $B_{R}(0)$. We prove below that $u^{\varepsilon}-\eta\leq0$ in $B_{R}(0)$. Let $x^{*}\in\overline{B_{R}(0)}$ be the point where  $u^{\varepsilon}-\eta$ in $B_{R}(0)$ attains its maximum. Observe that $(u^{\varepsilon}-\eta)(x^{*})\leq \sup_{B_{R+\frac{b}{2}}(0)\setminus B_{R}(0)}[E(u^{\varepsilon}-\eta)]^{+}$. If $x^{*}\in \partial B_{R}(0)$, we have trivially that 
\begin{equation}\label{max1}
(u^{\varepsilon}-\eta)\leq0,\ \text{in}\ B_{R}(0).
\end{equation}
Now, if $x^{*}\in B_{R}(0)$, we shall prove the statement (\ref{max1}) by contradiction. Suppose that $(u^{\varepsilon}-\eta)(x^{*})>0$. Since $u^{\varepsilon}-\eta$ attains its maximum at $x^{*}\in B_{R}(0)$ and $u-\eta=0$ on $\partial B_{R}(0)$, we have that
\begin{equation}\label{max3}
\begin{cases}
\deri^{1}(u^{\varepsilon}-\eta)(x^{*})=0,\\
\frac{1}{2}\tr(\sigma\deri^{2}(u^{\varepsilon}-\eta)(x^{*}))\leq0,\\
(u^{\varepsilon}-\eta)(x^{*}+z)-(u^{\varepsilon}-\eta)(x^{*})\leq0,\ \text{for all}\ x^{*}+z\in \overline{B_{R}(0)}.
\end{cases}
\end{equation}
Since $(u^{\varepsilon}-\eta)(x^{*}+z)-(u^{\varepsilon}-\eta)(x^{*})\leq0$, for all $x^{*}+z\in \overline{B_{R}(0)}$, and $b$ is small enough, it follows that
\begin{align}\label{max4}
0\geq\inted^{'} E(u^{\varepsilon}-\eta)(x^{*}).
\end{align}
From (\ref{max2}) and (\ref{max3}), we have that 
\begin{equation*}
0\geq\frac{1}{2}\tr(\sigma\deri^{2}(u^{\varepsilon}-\eta)(x^{*}))\geq q(u^{\varepsilon}-\eta)(x^{*})-\inted^{'} E(u^{\varepsilon}-\eta)(x^{*}). 
\end{equation*}
Then, by (\ref{max4}), we get $q(u^{\varepsilon}-\eta)(x^{*})\leq\inted^{'} E(u^{\varepsilon}-\eta)(x^{*})\leq0$, which is a contradiction and hence $u^{\varepsilon}-\eta\leq0$ in $B_{R}(0)$.  Therefore, from Remark \ref{solintdif1}, we conclude that there exists a constant $K_{5}>0$ independent of $\varepsilon$ such that $u^{\varepsilon}\leq K_{5}$ in $B_{R}(0)$, where $K_{5}\eqdef||\eta||_{\hol^{0}(\overline{B_{R}(0)})}$.
\end{proof}

Defining $\eta_{1}$ as
\begin{equation}
\eta_{1}(x)\eqdef
\begin{cases}\label{desig7}
\expo^{K_{6} R^{2}}-\expo^{K_{6}|x|^{2}}, & \text{if}\ x\in B_{R}(0),\\
E(u^{\varepsilon})(x), & \text{if}\ x\in B_{R}(0)^{\comp },
\end{cases}
\end{equation}
with $K_{6}>0$ a constant, we can see that  $\eta_{1}\in\hol^{2}(B_{R}(0))\cap\hol^{0}(B_{R}(0)^{\comp})$ is a positive concave function in $B_{R}(0)$. We have the following result.
\begin{lema}\label{solintegrodif1}
Let $\eta_{1}$ be defined as in (\ref{desig7}). Then, choosing $K_{6}>0$  large enough, 
\begin{equation}\label{desig6}
   q\eta_{1}(x)-\dif \eta_{1}(x)-\inted^{'} \eta_{1}(x)\geq C_{0}(1+|x|^{2})\geq h(x),\ \text{in}\ B_{R}(0).
\end{equation}
\end{lema}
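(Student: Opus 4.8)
The function $\eta_1$ is built as a radially symmetric concave barrier on $B_R(0)$, extended outside by $E(u^\varepsilon)$, so the estimate must be checked by computing the action of $q - \dif - \inted^{'}$ on the explicit part $\expo^{K_6 R^2} - \expo^{K_6 |x|^2}$ and controlling the integral term. First I would compute the derivatives of $\eta_1$ on $B_R(0)$: writing $\phi(x) = \expo^{K_6|x|^2}$ we have $\deri^1\phi(x) = 2K_6 x\,\phi(x)$ and $\deri^2\phi(x) = (2K_6 I + 4K_6^2 x x^{\trans})\phi(x)$, so $\deri^1\eta_1(x) = -2K_6 x\,\phi(x)$ and $\tr(\sigma\deri^2\eta_1(x)) = -(2K_6\tr(\sigma) + 4K_6^2\langle\sigma x,x\rangle)\phi(x)$. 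Using ellipticity (H\ref{h3}), $\langle\sigma x,x\rangle \geq \theta|x|^2$ and $\tr(\sigma)\leq d\Theta$, hence
\[
-\tfrac12\tr(\sigma\deri^2\eta_1(x)) = \bigl(K_6\tr(\sigma) + 2K_6^2\langle\sigma x,x\rangle\bigr)\phi(x) \geq \bigl(K_6\tr(\sigma) + 2K_6^2\theta|x|^2\bigr)\phi(x),
\]
and $-\langle\deri^1\eta_1(x),\gamma\rangle = 2K_6\langle x,\gamma\rangle\phi(x) \geq -2K_6|x||\gamma|\phi(x)$. Also $q\eta_1(x) = q(\expo^{K_6R^2} - \phi(x)) \geq -q\phi(x)$ crudely (the positive piece only helps). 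Collecting these,
\[
q\eta_1(x) - \dif\eta_1(x) \geq \bigl(2K_6^2\theta|x|^2 - 2K_6|\gamma||x| + K_6\tr(\sigma) - q\bigr)\phi(x).
\]

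**Handling the integral term.** For $\inted^{'}\eta_1(x) = \int_{\R^*}\bigl(\eta_1(x+z) - \eta_1(x) - \deri^1\eta_1(x+z)\bigr)\nu(\der z)$ one splits the integration region according to whether $x+z$ lies in $B_R(0)$ or in its complement. On the part where $x+z\in B_R(0)$, concavity of $\eta_1$ there gives $\eta_1(x+z)-\eta_1(x)\leq \langle\deri^1\eta_1(x+z),\,?\rangle$—more directly, since $\eta_1$ is concave on $B_R(0)$ and of class $\hol^2$, the difference $\eta_1(x+z)-\eta_1(x)$ is bounded above by $\langle\deri^1\eta_1(x),z\rangle \leq 2K_6|x||z|\phi(x) \leq 2K_6|x||z|\phi(R)$, which is integrable against $\nu$ because $\nu_1 = \int|z|\nu(\der z) < \infty$ by (H\ref{h2}); the term $-\deri^1\eta_1(x+z)$ is a fixed bounded vector of magnitude $\leq 2K_6 R\phi(R)$, integrable since $\nu_0 < \infty$. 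On the part where $x+z\in B_R(0)^{\comp}$, we invoke Lemma \ref{cotaintaf1} and Lemma \ref{intcotadif1.1} applied to $w = u^\varepsilon$ to bound $\int_{\{|x+z|>R\}}E(u^\varepsilon)(x+z)\nu(\der z)$ and $\int_{\{|x+z|>R\}}\partial_i E(u^\varepsilon)(x+z)\nu(\der z)$; crucially these bounds are controlled by $\|u^\varepsilon\|_{\hol^1(\mathcal B'_1)}\nu(\mathcal B')$, and $\mathcal B'$ is a thin shell whose $\nu$-measure can be made small, while $\|u^\varepsilon\|_{\hol^0}$ is uniformly bounded by Lemma \ref{inde}. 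The upshot is a bound of the form $|\inted^{'}\eta_1(x)| \leq C(K_6)(1 + |x|)$ with $C(K_6)$ linear in $K_6$ (or at worst $K_6\expo^{K_6R^2}$, a fixed constant once $K_6$ is chosen), uniformly in $\varepsilon$.

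**Conclusion and the main obstacle.** Combining, $q\eta_1(x) - \dif\eta_1(x) - \inted^{'}\eta_1(x) \geq \bigl(2K_6^2\theta|x|^2 - c_1 K_6|x| - c_2 K_6 - q - C(K_6)\bigr)\phi(x)$ where $c_1, c_2$ depend only on $\gamma, \sigma, d$. Since $\phi(x) = \expo^{K_6|x|^2} \geq 1$ on $B_R(0)$, it suffices to choose $K_6$ so large that the bracket dominates $C_0(1+|x|^2)$ for all $|x|\leq R$; because the leading coefficient $2K_6^2\theta$ in $|x|^2$ grows quadratically in $K_6$ while the competing terms grow at most linearly in $K_6$ (after fixing how $C(K_6)$ depends on $K_6$), this is achievable, and the final inequality $\cdots \geq C_0(1+|x|^2) \geq h(x)$ follows from (H\ref{h1}) (which gives $|h(x)|\leq \|h\|_{\hol^2}\leq C_0$, so certainly $h(x)\leq C_0(1+|x|^2)$). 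The main obstacle I anticipate is making the dependence of the integral-term bound on $K_6$ genuinely sublinear in the leading power, i.e. verifying that the contribution of $\inted^{'}\eta_1$ does not itself scale like $K_6^2\expo^{K_6R^2}$ in a way that cancels the gain; this requires care in exploiting concavity of $\eta_1$ on $B_R(0)$ (so that the $\eta_1(x+z)-\eta_1(x)$ part contributes with a favorable sign or only a $K_6$-linear penalty) and in using the smallness of $\nu(\mathcal B')$ together with the uniform bound of Lemma \ref{inde} to absorb the exterior contribution.
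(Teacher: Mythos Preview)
Your approach is essentially the paper's: compute the derivatives of $\eta_1$, use ellipticity to bound $-\dif\eta_1$ from below, split the nonlocal term according to whether $x+z\in B_R(0)$ or not, invoke concavity on the inner part, and use Lemma~\ref{cotaintaf1} on the outer part. Two clarifications will clean things up and dissolve your ``main obstacle.''

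First, the definition of $\inted^{'}$ in \eqref{intop0} contains a typo: the last term in the integrand should be $\langle\deri^{1}u(x),z\rangle$, not $\deri^{1}u(x+z)$ (the latter is a vector, so the expression would not even be scalar). The paper's own proof uses the corrected form, and so should you; this is also how $\inted^{'}$ must read for \eqref{Pd1.1.2} to match \eqref{p.13.0}. With the correct integrand $\eta_1(x+z)-\eta_1(x)-\langle\deri^{1}\eta_1(x),z\rangle$, your concavity observation $\eta_1(x+z)-\eta_1(x)\le\langle\deri^{1}\eta_1(x),z\rangle$ on $\{|x+z|<R\}$ says that the integrand is \emph{nonpositive} there, so
\[
-\int_{\{|x+z|<R\}}\bigl(\eta_1(x+z)-\eta_1(x)-\langle\deri^{1}\eta_1(x),z\rangle\bigr)\,\nu(\der z)\ \ge\ 0.
\]
There is nothing to bound on this region; the contribution to $-\inted^{'}\eta_1(x)$ carries a favorable sign outright. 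This is precisely what resolves your worry about a $K_6^2\expo^{K_6R^2}$-type blow-up: no such term ever appears.

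Second, for the exterior region $\{|x+z|\ge R\}$ you only need Lemma~\ref{cotaintaf1} (together with $\eta_1\ge 0$ and $|\langle\deri^1\eta_1(x),z\rangle|\le 2K_6\expo^{K_6|x|^2}|x||z|$); Lemma~\ref{intcotadif1.1} is not used here. The paper arrives at
\[
-\inted^{'}\eta_1(x)\ \ge\ -2K_6\bigl(A_0\nu(\mathcal{B}')+\nu_0\,\expo^{K_6|x|^2}|x|\bigr),
\]
which is linear in $K_6$ (modulo the harmless $\expo^{K_6|x|^2}$ factor already present in the differential part). Combined with
\[
-\dif\eta_1(x)\ \ge\ 2K_6\,\expo^{K_6|x|^2}\Bigl(K_6\theta|x|^2-(\Lambda+\nu_0)|x|+\tfrac{\theta d}{2}\Bigr),
\]
and with $q\eta_1\ge 0$ (no need for the crude lower bound $-q\phi(x)$), the leading $K_6^2\theta|x|^2$ and the constant $K_6\theta d$ in the bracket dominate for $K_6$ large, yielding $\ge C_0(1+|x|^2)$ on the bounded domain $B_R(0)$; the final inequality $C_0(1+|x|^2)\ge h(x)$ is just (H\ref{h1}).
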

This statement will be  helpful in finding a constant, independent of $\varepsilon$, which bounds by above $|\partial_{\vartheta}u^{\varepsilon}|$ in $\partial B_{R}(0)$. Recall that $\partial_{\vartheta}f$ denotes the directional derivative of the function $f$ with respect to the unit vector $\vartheta\in\R^{d}$, i.e.  $\partial_{\vartheta}f(x)\eqdef\lim_{\delta\rightarrow0}\frac{f(x)-f(x-\delta\vartheta)}{h},\ \text{with}\ x\in\R^{d}$.

\begin{proof}[Proof of Lemma \ref{solintegrodif1}]
Let $\eta_{1}$ be as in (\ref{desig7}). Calculating its first and second derivatives in $B_{R}(0)$,
\begin{equation}\label{primder1}
\begin{cases}
 \partial_{i} \eta_{1}(x)&=-2  K_{6}\expo^{K_{6}|x|^{2}}x_{i},\\
\partial^{2}_{ii} \eta_{1}(x)&=-2  K_{6}\expo^{K_{6}|x|^{2}}(1+2  K_{6} x_{i}^{2}),\\
\partial^{2}_{ji} \eta_{1}(x)&=-4  K_{6}^{2}\expo^{K_{6}|x|^{2}}x_{i}x_{j},
\end{cases}
\end{equation}
with $i,j\in\{1,\dots,d\}$ and $i\neq j$, by (H\ref{h3}) and (\ref{primder1}), we see that
\begin{align}\label{primder1.1}
-\dif \eta_{1}(x)&=2  K_{6}\expo^{K_{6}|x|^{2}}\biggl(\frac{1}{2}\sum_{i}\sigma_{ii}+  K_{6}\langle\sigma x,x\rangle+\langle x,\gamma\rangle\biggr)\geq2  K_{6}\expo^{K_{6}|x|^{2}}\biggl(  K_{6}\theta|x|^{2}-\Lambda|x|+\frac{\theta d}{2}\biggr).
\end{align}
Since $\eta_{1}$ is a positive concave function in $B_{R}(0)$, we have that  $\eta_{1}(x+z)-\eta_{1}(x)\leq \langle\deri^{1} \eta_{1}(x),z\rangle$, for all $|x+z|<R$. Then, using Lemma \ref{cotaintaf1}, we obtain  the following inequalities  
\begin{align}\label{primder1.2}
-\inted^{'}\eta_{1}(x)&\geq -\int_{\{|x+z|\geq R\}}(E(u^{\varepsilon})(x+z)-\eta_{1}(x)-\langle\deri^{1}\eta_{1}(x),z\rangle)\nu(\der z)\notag\\
&\geq-\biggl|\int_{\{|x+z|\geq R\}}E(u^{\varepsilon})(x+z)\nu(\der z)\biggr|+ \eta_{1}(x)\int_{\{|x+z|\geq R\}}\nu(\der z)+2  K_{6}\expo^{K_{6}|x|^{2}}\int_{\{|x+z|\geq R\}}\langle x,z\rangle\nu(\der z)\notag\\
&\geq -2K_{6}(A_{0}\nu(\mathcal{B}')+\nu_{0}\expo^{K_{6}|x|^{2}}|x|).
\end{align}
Recall that $\mathcal{B}'= B_{R+\frac{b}{2}}(0)\setminus\overline{B_{R}(0)}$ and  $\nu_{0}$, $A_{0}$ are constants given in (H\ref{h2}) and Proposition  \ref{cotEu1}, respectively.  Using (\ref{primder1.1})--(\ref{primder1.2}), we get that for any $x\in B_{R}(0)$,
\begin{equation*}
 q\eta_{1}(x)-\dif \eta_{1}(x)-\inted^{'} \eta_{1}(x)\\
 \geq 2  K_{6}\expo^{K_{6}|x|^{2}}\biggl(\theta  K_{6}|x|^{2}-(\Lambda+\nu_{0})|x|+\frac{\theta d}{2}\biggr)-2K_{6}A_{0}\nu(\mathcal{B}'),
\end{equation*}
From (H\ref{h1}) and choosing $ K_{6}$ large enough, it implies (\ref{desig6}).
\end{proof}

We obtain the following result as a consequence of the previous lemma.

\begin{lema}\label{lemafrontera}
Let $K_{6}>0$ be the constant given in Lemma \ref{solintegrodif1}. Then $|\partial_{\vartheta} u^{\varepsilon}|\leq2K_{6}R\expo^{K_{6}R^{2}}$, in $\partial B_{R}(0)$.
\end{lema}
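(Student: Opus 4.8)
The plan is to use the function $\eta_1$ from \eqref{desig7} as a barrier for $u^\varepsilon$ on $B_R(0)$, exploiting the differential inequality \eqref{desig6} together with the weak maximum principle (Theorem \ref{princmax1.0}) in essentially the same way as in the proof of Lemma \ref{inde}. First I would record that, by Lemma \ref{solintegrodif1}, $\eta_1$ satisfies $q\eta_1-\dif\eta_1-\inted^{'}\eta_1\geq h$ in $B_R(0)$, while $u^\varepsilon$ satisfies $qu^\varepsilon-\dif u^\varepsilon-\inted^{'}E(u^\varepsilon)\leq h$ in $B_R(0)$ (dropping the nonnegative penalty term $\psi_\varepsilon(|\deri^1 u^\varepsilon|^2)$, cf. \eqref{Pd1.1.2}). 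Subtracting, the difference $v\eqdef u^\varepsilon-\eta_1$ satisfies
\begin{equation*}
q v-\dif v-\inted^{'}E(v)\leq 0,\quad\text{in }B_R(0),
\end{equation*}
provided one checks that $\inted^{'}E(\eta_1)=\inted^{'}\eta_1$ on $B_R(0)$, which holds because $E(u^\varepsilon)=u^\varepsilon$ on $\overline{B_R(0)}$ and $\eta_1$ was defined on $B_R(0)^{\comp}$ precisely as $E(u^\varepsilon)$, so $E(v)$ and the relevant integrand agree. On $\partial B_R(0)$ we have $u^\varepsilon=0$ and $\eta_1=\expo^{K_6 R^2}-\expo^{K_6 R^2}=0$, so $v=0$ there; and on $B_R(0)^{\comp}$, $E(v)=E(u^\varepsilon)-\eta_1=0$ by construction, so $[E(v)]^+=0$ outside $\overline{B_R(0)}$. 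Applying Theorem \ref{princmax1.0} gives $\sup_{\R^d}E(v)=\sup_{B_{R+b/2}(0)\setminus B_R(0)}[E(v)]^+=0$, hence $u^\varepsilon\leq\eta_1$ in $B_R(0)$; by the analogous argument with $-\eta_1$ (using that $u^\varepsilon\geq 0$ by Proposition \ref{upos1} and $\eta_1\geq 0$, or by a direct barrier for $-u^\varepsilon$) one also gets a lower bound, but in fact for the boundary-gradient estimate only the upper comparison is needed together with $u^\varepsilon\geq 0$.

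Next I would turn the pointwise comparison into a gradient bound at the boundary. Fix $x_0\in\partial B_R(0)$ and let $\vartheta$ be a unit vector; the directional derivative is $\partial_\vartheta u^\varepsilon(x_0)=\lim_{\delta\to 0}\frac{u^\varepsilon(x_0)-u^\varepsilon(x_0-\delta\vartheta)}{\delta}=\lim_{\delta\to 0}\frac{-u^\varepsilon(x_0-\delta\vartheta)}{\delta}$ since $u^\varepsilon(x_0)=0$. For the inner normal direction (or any $\vartheta$ pointing into $B_R(0)$ so that $x_0-\delta\vartheta\in B_R(0)$ for small $\delta>0$) we use $0\leq u^\varepsilon(x_0-\delta\vartheta)\leq\eta_1(x_0-\delta\vartheta)$, and since $\eta_1(x_0)=0$ this squeezes $|\partial_\vartheta u^\varepsilon(x_0)|\leq|\partial_\vartheta\eta_1(x_0)|$. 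From \eqref{primder1}, $\deri^1\eta_1(x)=-2K_6\expo^{K_6|x|^2}x$, so on $\partial B_R(0)$ we have $|\deri^1\eta_1(x_0)|=2K_6\expo^{K_6 R^2}|x_0|=2K_6 R\expo^{K_6 R^2}$, whence $|\partial_\vartheta u^\varepsilon(x_0)|\leq 2K_6 R\expo^{K_6 R^2}$. For $\vartheta$ pointing outward or tangential one reduces to the inner case: since $u^\varepsilon=0$ on all of $\partial B_R(0)$, the tangential derivatives vanish, and the full gradient $\deri^1 u^\varepsilon(x_0)$ on $\partial B_R(0)$ is parallel to the normal $x_0/R$, so $|\partial_\vartheta u^\varepsilon(x_0)|\leq|\deri^1 u^\varepsilon(x_0)|=|\partial_n u^\varepsilon(x_0)|\leq 2K_6 R\expo^{K_6 R^2}$, which is the claim.

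The main obstacle I anticipate is the bookkeeping around the nonlocal term: one must be careful that the comparison $v=u^\varepsilon-\eta_1$ genuinely lands in the class $\hol^2(B_R(0))\cap\hol^0_{\comp}(B_{R+b/2}(0))$ required by Theorem \ref{princmax1.0} — in particular that $E(v)$ has support in $B_{R+b/2}(0)$ (it does, because $E(u^\varepsilon)$ does by \eqref{p13.1.0} and $\eta_1$ was matched to $E(u^\varepsilon)$ outside $B_R(0)$, so $v\equiv 0$ outside $B_{R+b/2}(0)$) and that the operator $\inted^{'}$ applied to the piecewise-defined $\eta_1$ really equals $\inted^{'}$ applied to $E(v)$ plus $\inted^{'}\eta_1$ in the decomposition, i.e. that the interior/exterior splitting used in Lemma \ref{solintegrodif1} is consistent with the splitting used here. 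Once that identification is nailed down, the argument is a routine barrier/maximum-principle comparison followed by differentiating the barrier, as in \cite{soner} and \cite{garroni}.
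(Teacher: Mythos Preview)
Your proposal is correct and follows essentially the same route as the paper: use $\eta_1$ as a barrier, apply the weak maximum principle (Theorem \ref{princmax1.0}) to $u^\varepsilon-\eta_1$ exploiting that $\eta_1=E(u^\varepsilon)$ on $B_R(0)^{\comp}$ so the exterior positive part vanishes, then convert the sandwich $0\leq u^\varepsilon\leq\eta_1$ with equality on $\partial B_R(0)$ into the directional-derivative bound via $|\deri^1\eta_1|=2K_6R\expo^{K_6R^2}$ on the boundary. Your extra care about the tangential/normal decomposition and the support of $E(v)$ is sound but not strictly needed beyond what the paper does.
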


\begin{proof}
Let $x\in\partial B_{R}(0)$, $\vartheta$ a unit vector and $\eta_{1}$ as in (\ref{desig7}). Since 
\begin{equation*}
\begin{cases}
q (u^{\varepsilon}-\eta_{1})-\dif(u^{\varepsilon}-\eta_{1})-\inted^{'}(E(u^{\varepsilon})-\eta_{1})\leq0,\ \text{in}\  B_{R}(0),\\
 \sup_{B_{R+\frac{b}{2}}\setminus B_{R}(0)}[E(u^{\varepsilon})-\eta_{1}]^{+}=0,
 \end{cases}
 \end{equation*}
by the weak maximum principle, Theorem \ref{princmax1.0}, it follows that  $u^{\varepsilon}\leq\eta_{1}$. Since these functions agree in $B_{R}(0)^{\text{c}}$ and $u^{\varepsilon}>0$, we get $\partial_{\vartheta} \eta_{1}(x)\leq\partial_{\vartheta}  u^{\varepsilon}(x)\leq0$. It implies that $|\partial_{\vartheta} u^{\varepsilon}|\leq |\deri^{1}\eta_{1}|$ in $\partial B_{R}(0)$. Recalling the definition of $\eta_{1}$ and its first derivatives, see (\ref{primder1}), it follows that $|\partial_{\vartheta} u^{\varepsilon}|\leq 2K_{6}R\expo^{K_{6}R^{2}}$ in $\partial B_{R}(0)$.
 \end{proof}

Before showing that $|\deri^{1}u^{\varepsilon}|$ is bounded by a positive constant in $\overline{B_{R}(0)}$, which is independent of $\varepsilon$; see Lemma \ref{lemafrontera1},  we  establish an auxiliary result, whose proof is in the appendix.

\begin{lema}\label{lemfrontera1.0}
Define the auxiliary function $\varphi:\overline{B_{R}(0)}\longrightarrow\R$ as 
\begin{equation}\label{aux.1.0}
\varphi\eqdef|\deri^{1}u^{\varepsilon}|^{2}-Mu^{\varepsilon},\ \text{in $\overline{B_{R}(0)}$}, 
\end{equation}
where  $M\eqdef\max_{x\in\overline{B_{R}(0)}}|\deri^{1}u^{\varepsilon}(x)|$. Then
\begin{align}\label{partu4}
\frac{1}{2}\sum_{ij}\sigma_{ij}\partial_{ij}^{2}\varphi&\geq \psi'_{\varepsilon}(g)(2\langle\deri^{1}\varphi,\deri^{1}u^{\varepsilon}\rangle+M|\deri^{1}u^{\varepsilon}|^{2})-(K_{8}+MK_{9})|\deri^{1}u^{\varepsilon}|-MK_{7}-\langle\deri^{1}\varphi,\widetilde{\gamma}\rangle,
\end{align}
in $B_{R}(0)$, where the constants $K_{7},K_{8},K_{9}$ are independent of $\varepsilon$.
\end{lema}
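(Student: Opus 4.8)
The plan is to compute $\frac{1}{2}\sum_{ij}\sigma_{ij}\partial^2_{ij}\varphi$ directly from the definition $\varphi = g - Mu^\varepsilon$ with $g = |\deri^1 u^\varepsilon|^2$, and then to substitute the PDE satisfied by the second derivatives of $u^\varepsilon$ (Proposition \ref{intop4.1}) in order to turn third-order terms into lower-order ones, discarding a manifestly non-negative piece coming from ellipticity. First I would expand, using \eqref{intop4.1.3.0},
\[
\tfrac{1}{2}\sum_{ij}\sigma_{ij}\partial^2_{ij}g = \sum_{ij}\sigma_{ij}\sum_k\bigl(\partial^2_{ki}u^\varepsilon\partial^2_{kj}u^\varepsilon + \partial_k u^\varepsilon\,\partial^3_{ijk}u^\varepsilon\bigr).
\]
The first sum is $\sum_k\langle\sigma\,\deri^1(\partial_k u^\varepsilon),\deri^1(\partial_k u^\varepsilon)\rangle \ge 0$ by (H\ref{h3}); this is the term we throw away. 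In the second sum I recognize $\tfrac12\tr(\sigma\deri^2\partial_k u^\varepsilon)$ after summing against $\sigma_{ij}$, so I can substitute \eqref{intop4.1.0}: each $\tfrac12\tr(\sigma\deri^2\partial_k u^\varepsilon)$ equals $q'\partial_k u^\varepsilon - \langle\deri^1\partial_k u^\varepsilon,\widetilde\gamma\rangle - \partial_k h - \inted\partial_k E(u^\varepsilon) + \psi_\varepsilon'(g)\partial_k g$.

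Next I would multiply by $2\partial_k u^\varepsilon$ and sum over $k$. The term $2\sum_k\partial_k u^\varepsilon\,\psi_\varepsilon'(g)\partial_k g$ equals $\psi_\varepsilon'(g)\langle\deri^1 g,\deri^1 u^\varepsilon\rangle$ up to the factor built into $\partial_k g$; combined with the $-Mu^\varepsilon$ part (whose Hessian contributes $-\tfrac{M}{2}\tr(\sigma\deri^2 u^\varepsilon)$, which I rewrite from \eqref{p.13.0} as $-M(q'u^\varepsilon - \langle\deri^1 u^\varepsilon,\widetilde\gamma\rangle - h - \inted E(u^\varepsilon) + \psi_\varepsilon(g))$), and using $\deri^1 g = 2\sum_k\partial_k u^\varepsilon\,\deri^1\partial_k u^\varepsilon$ together with the identity $\langle\deri^1\varphi,\deri^1 u^\varepsilon\rangle = \langle\deri^1 g,\deri^1 u^\varepsilon\rangle - M\langle\deri^1 u^\varepsilon,\deri^1 u^\varepsilon\rangle$, I reassemble the $\psi_\varepsilon'(g)$-terms into $\psi_\varepsilon'(g)\bigl(2\langle\deri^1\varphi,\deri^1 u^\varepsilon\rangle + M|\deri^1 u^\varepsilon|^2\bigr)$ plus the non-negative contribution $\psi_\varepsilon'(g)\,M\,g \ge 0$ arising from the convexity sign $\psi_\varepsilon'\ge0$ (together with $\psi_\varepsilon(g)\ge0$, used to drop $-M\psi_\varepsilon(g)$ after moving it to the favorable side — here I must be careful that the surviving sign is the one claimed). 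The drift terms $-2\sum_k\partial_k u^\varepsilon\langle\deri^1\partial_k u^\varepsilon,\widetilde\gamma\rangle$ plus $M\langle\deri^1 u^\varepsilon,\widetilde\gamma\rangle$ collapse to $-\langle\deri^1\varphi,\widetilde\gamma\rangle$ by the same gradient identity.

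It remains to bound the genuinely lower-order leftovers: $-2\sum_k\partial_k u^\varepsilon\,\partial_k h = -2\langle\deri^1 u^\varepsilon,\deri^1 h\rangle$, the mirror term $Mh$, the zeroth-order $q'$-terms $2q'|\deri^1 u^\varepsilon|^2 - Mq'u^\varepsilon$, and the integral terms $-2\sum_k\partial_k u^\varepsilon\,\inted\partial_k E(u^\varepsilon) + M\,\inted E(u^\varepsilon)$. Using (H\ref{h1}) for $\|h\|_{\hol^2}\le C_0$, Lemma \ref{inde} for $u^\varepsilon\le K_5$, Proposition \ref{upos1} for $u^\varepsilon\ge0$, Lemma \ref{lemalip.1.0}(i) and (iii) to bound $|\inted E(u^\varepsilon)|$ and $|\inted\partial_k E(u^\varepsilon)|$ by $\varepsilon$-independent constants times $\|u^\varepsilon\|_{\hol^0}\le K_5$ and $\|u^\varepsilon\|_{\hol^1}$, and crucially the \emph{a priori} bound $|\deri^1 u^\varepsilon|\le M$ wherever a quadratic $|\deri^1 u^\varepsilon|^2$ would otherwise appear (so it is absorbed as $M|\deri^1 u^\varepsilon|$), one collects everything into the form $-(K_8 + MK_9)|\deri^1 u^\varepsilon| - MK_7$ with $K_7,K_8,K_9$ depending only on $C_0,K_5,\nu_0,A_0,\Theta,d$ and the constants in Lemmas \ref{lemalip.1.0}–\ref{intcotadif1.1}, hence independent of $\varepsilon$. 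The main obstacle is purely bookkeeping: keeping the signs straight when the two families of $\psi_\varepsilon$-terms (one from $\partial^2 g$, one from the Hessian of $-Mu^\varepsilon$) are combined, and making sure the discarded terms — the ellipticity square and the $\psi_\varepsilon'(g)Mg$ and $M\psi_\varepsilon(g)$ pieces — all fall on the $\ge$ side; no single estimate is deep, but the reorganization into exactly the stated right-hand side is where care is needed.
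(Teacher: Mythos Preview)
Your overall strategy is exactly the paper's: expand $\tfrac12\tr(\sigma\deri^{2}\varphi)$, discard the non-negative ellipticity square $\sum_{k}\langle\sigma\deri^{1}\partial_{k}u^{\varepsilon},\deri^{1}\partial_{k}u^{\varepsilon}\rangle$, substitute \eqref{intop4.1.0} and the equation \eqref{p.13.0}, collapse the drift terms into $-\langle\deri^{1}\varphi,\widetilde\gamma\rangle$, and bound the zero-order, $h$, and integral leftovers by constants times $|\deri^{1}u^{\varepsilon}|$ and $M$. That part is fine and matches the paper line for line (the paper uses Lemmas \ref{cotaintaf1}--\ref{intcotadif1.1} rather than Lemma \ref{lemalip.1.0} for the integral terms, but your route also lands on bounds of the form $(K_{8}+MK_{9})|\deri^{1}u^{\varepsilon}|$ once you write $\|u^{\varepsilon}\|_{\hol^{1}}\le K_{5}+M$).

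There is, however, one genuine gap at the step you yourself flag as delicate. After rewriting $\langle\deri^{1}g,\deri^{1}u^{\varepsilon}\rangle=\langle\deri^{1}\varphi,\deri^{1}u^{\varepsilon}\rangle+Mg$, the $\psi$-terms read
\[
2\psi'_{\varepsilon}(g)\langle\deri^{1}u^{\varepsilon},\deri^{1}g\rangle-M\psi_{\varepsilon}(g)
=\psi'_{\varepsilon}(g)\bigl(2\langle\deri^{1}\varphi,\deri^{1}u^{\varepsilon}\rangle+Mg\bigr)+M\psi'_{\varepsilon}(g)g-M\psi_{\varepsilon}(g).
\]
You propose to drop $+M\psi'_{\varepsilon}(g)g\ge0$ and then, separately, to drop $-M\psi_{\varepsilon}(g)$ using $\psi_{\varepsilon}(g)\ge0$. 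The second of these is in the wrong direction: $-M\psi_{\varepsilon}(g)\le0$, so it cannot be discarded in a lower bound. The paper does \emph{not} treat these two pieces separately; it combines them via the convexity inequality
\[
\psi_{\varepsilon}(g)\le\psi'_{\varepsilon}(g)\,g,
\]
valid because $\psi_{\varepsilon}$ is convex with $\psi_{\varepsilon}(0)=0$ (see \eqref{p12.1}). This gives $M\psi'_{\varepsilon}(g)g-M\psi_{\varepsilon}(g)\ge0$ in one stroke, and the remaining term is exactly $\psi'_{\varepsilon}(g)\bigl(2\langle\deri^{1}\varphi,\deri^{1}u^{\varepsilon}\rangle+M|\deri^{1}u^{\varepsilon}|^{2}\bigr)$. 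Replace your two separate sign arguments by this single convexity estimate and the proof goes through as you outlined.
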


\begin{lema}\label{lemafrontera1}
There exists a constant $K_{10}>0$ independent of $\varepsilon$  such that $|\deri^{1}u^{\varepsilon}|\leq K_{10}$, in $\overline{B_{R}(0)}$. 
\end{lema}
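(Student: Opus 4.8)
The plan is to bound $M=\max_{\overline{B_R(0)}}|\deri^1 u^\varepsilon|$ by a constant independent of $\varepsilon$, using the auxiliary function $\varphi=|\deri^1 u^\varepsilon|^2-Mu^\varepsilon$ from Lemma~\ref{lemfrontera1.0} together with the maximum principle. First I would consider the point $x^*\in\overline{B_R(0)}$ where $\varphi$ attains its maximum, and split into two cases according to whether $x^*$ lies on the boundary or in the interior.

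If $x^*\in\partial B_R(0)$, then for every $x\in\overline{B_R(0)}$ we have $|\deri^1 u^\varepsilon(x)|^2\le |\deri^1 u^\varepsilon(x)|^2-Mu^\varepsilon(x)+Mu^\varepsilon(x)=\varphi(x)+Mu^\varepsilon(x)\le\varphi(x^*)+MK_5$, where I use $0\le u^\varepsilon\le K_5$ from Proposition~\ref{upos1} and Lemma~\ref{inde}. On the boundary $u^\varepsilon=0$, so $\varphi(x^*)=|\deri^1 u^\varepsilon(x^*)|^2\le (2K_6R\expo^{K_6R^2})^2$ by Lemma~\ref{lemafrontera} (the tangential derivatives vanish since $u^\varepsilon\equiv0$ on $\partial B_R(0)$, so only the normal derivative, controlled by Lemma~\ref{lemafrontera}, contributes). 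Hence $M^2\le (2K_6R\expo^{K_6R^2})^2+MK_5$, a quadratic inequality in $M$ which yields $M\le K_{10}$ for an explicit $\varepsilon$-independent constant.

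If $x^*\in B_R(0)$, then at $x^*$ we have $\deri^1\varphi(x^*)=0$ and $\frac12\sum_{ij}\sigma_{ij}\partial^2_{ij}\varphi(x^*)\le0$ (since $\sigma$ is positive definite by (H\ref{h3}) and $\varphi$ has an interior max). Substituting into the inequality \eqref{partu4} of Lemma~\ref{lemfrontera1.0}, the terms $2\langle\deri^1\varphi,\deri^1u^\varepsilon\rangle$ and $\langle\deri^1\varphi,\widetilde\gamma\rangle$ drop out, and using $\psi'_\varepsilon\ge0$ and $\psi'_\varepsilon(g)M|\deri^1u^\varepsilon|^2\ge0$ we get
\begin{equation*}
0\ge -(K_8+MK_9)|\deri^1u^\varepsilon(x^*)|-MK_7,
\end{equation*}
i.e. $(K_8+MK_9)|\deri^1u^\varepsilon(x^*)|+MK_7\ge0$, which is automatically true and gives no bound directly. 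The point is rather that $x^*$ is the max of $\varphi$, so $M^2-MK_5\le\varphi(x^*)\le|\deri^1u^\varepsilon(x^*)|^2-Mu^\varepsilon(x^*)\le|\deri^1u^\varepsilon(x^*)|^2$; combined with the interior inequality, which should be arranged (after dividing by $2\psi'_\varepsilon(g)$ when $\psi'_\varepsilon(g)>0$, or handled directly when $\psi'_\varepsilon(g)=0$, in which case the equation degenerates to a linear estimate for $|\deri^1u^\varepsilon(x^*)|$) to force $|\deri^1u^\varepsilon(x^*)|$ to be bounded by an $\varepsilon$-independent constant. Then $M^2-MK_5$ is bounded, so $M\le K_{10}$.

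The main obstacle is the interior case: one must extract from \eqref{partu4} a genuine bound on $|\deri^1u^\varepsilon(x^*)|$ rather than a vacuous inequality. The trick is to exploit that $M|\deri^1u^\varepsilon(x^*)|^2$ can be compared with $M^2$ only if one simultaneously tracks the relation $\varphi(x^*)\ge\varphi(x)$ for all $x$; choosing $x$ to be a point where $|\deri^1u^\varepsilon|=M$ gives $\varphi(x^*)\ge M^2-Mu^\varepsilon(x)\ge M^2-MK_5$. Feeding this back, together with the sign information at $x^*$, closes the argument: either $\psi'_\varepsilon(g(x^*))=0$ and \eqref{partu4} gives $MK_7+(K_8+MK_9)|\deri^1u^\varepsilon(x^*)|\ge0$ which is useless, so instead one uses that when $\psi'_\varepsilon(g(x^*))>0$ the dominant positive term $\psi'_\varepsilon(g)M|\deri^1u^\varepsilon(x^*)|^2$ must be absorbed, forcing $M|\deri^1u^\varepsilon(x^*)|^2\le C(1+M)|\deri^1u^\varepsilon(x^*)|+CM$, hence $|\deri^1u^\varepsilon(x^*)|\le C$. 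Care is needed in the degenerate subcase $\psi'_\varepsilon(g(x^*))=0$, where one falls back on the quadratic estimate $M^2\le|\deri^1u^\varepsilon(x^*)|^2+MK_5$ together with a direct bound on $|\deri^1u^\varepsilon(x^*)|$ obtained from the PDE \eqref{Pd1.1.2} evaluated at $x^*$ via interior gradient estimates. In all cases one arrives at $|\deri^1 u^\varepsilon|\le K_{10}$ on $\overline{B_R(0)}$ with $K_{10}$ independent of $\varepsilon$.
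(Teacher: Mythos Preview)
Your boundary case is fine. The gap is in the interior case. After plugging $\deri^1\varphi(x^*)=0$ and $\frac12\sum_{ij}\sigma_{ij}\partial^2_{ij}\varphi(x^*)\le0$ into \eqref{partu4}, the inequality you must exploit is
\[
0\ \ge\ \psi'_\varepsilon(g(x^*))\,M\,|\deri^1 u^\varepsilon(x^*)|^2-(K_8+MK_9)|\deri^1 u^\varepsilon(x^*)|-MK_7.
\]
You then try to divide by $\psi'_\varepsilon(g(x^*))$ whenever it is positive, claiming this forces $M|\deri^1 u^\varepsilon(x^*)|^2\le C(1+M)|\deri^1 u^\varepsilon(x^*)|+CM$. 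That step is false: if $\psi'_\varepsilon(g(x^*))$ is positive but tiny (and it can be of order $\varepsilon$ or smaller), division introduces a factor $1/\psi'_\varepsilon(g(x^*))$ on the right and you obtain no $\varepsilon$-independent control. Your fallback to ``interior gradient estimates from the PDE'' in the degenerate sub-case is not a proof and does not close the argument.

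The missing idea is to split the interior case at a \emph{fixed} threshold, say $\psi'_\varepsilon(g(x^*))\ge1$ versus $\psi'_\varepsilon(g(x^*))<1$. If $\psi'_\varepsilon(g(x^*))\ge1$, the displayed inequality gives the genuine quadratic $0\ge M|\deri^1 u^\varepsilon(x^*)|^2-(K_8+MK_9)|\deri^1 u^\varepsilon(x^*)|-MK_7$, which (assuming without loss $M\ge1$) yields $|\deri^1 u^\varepsilon(x^*)|\le K_8+K_9+K_7^{1/2}$. If $\psi'_\varepsilon(g(x^*))<1$, you do not use \eqref{partu4} at all: since $\psi'_\varepsilon(r)=\varepsilon^{-1}\psi'((r-1)/\varepsilon)$ and $\psi'(s)=1$ for $s\ge2$ by \eqref{p12.1}, the condition $\psi'_\varepsilon(g(x^*))<1<\varepsilon^{-1}$ forces $(g(x^*)-1)/\varepsilon<2$, i.e.\ $|\deri^1 u^\varepsilon(x^*)|^2=g(x^*)<1+2\varepsilon<3$. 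In either sub-case $|\deri^1 u^\varepsilon(x^*)|$ is bounded independently of $\varepsilon$, and then your inequality $M^2\le|\deri^1 u^\varepsilon(x^*)|^2+MK_5$ (from $\varphi(x^*)\ge\varphi(x)$ at a point where $|\deri^1 u^\varepsilon|=M$) finishes the proof.
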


\begin{proof}
Consider the auxiliary function $\varphi$ as in (\ref{aux.1.0}). Observe that if $M\leq1$, we obtain a bound for $M$ that is independent of $\varepsilon$.  We assume henceforth that $M\geq1$. Taking $x^{*}\in\overline{B_{R}(0)}$ as a point where $\varphi$ attains its maximum on $B_{R}(0)$, it  suffices to bound $|\deri^{1}u^{\varepsilon}(x^{*})|^{2}$ for a constant independent of $\varepsilon$, since
\begin{align}\label{partu5}
|\deri^{1}u^{\varepsilon}(x)|^{2}&\leq |\deri^{1}u^{\varepsilon}(x^{*})|^{2}+M(u^{\varepsilon}(x^{*})+u^{\varepsilon}(x))\notag\\
&\leq|\deri^{1}u^{\varepsilon}(x^{*})|^{2}+2MK_{5},
\end{align}
for all $x\in \overline{B_{R}(0)}$. The last inequality in (\ref{partu5}) is obtained from Lemma \ref{inde}.  If $x^{*}\in\partial B_{R}(0)$,  by  Lemma \ref{lemafrontera}, it is easy to  deduce  $\varphi(x^{*})=|\deri^{1}u^{\varepsilon}(x^{*})|^{2}\leq 2K_{6}R\expo^{K_{6}R^{2}}$, where $K_{6}$ is as in Lemma \ref{lemafrontera}. Then, from (\ref{partu5}), $|\deri^{1}u^{\varepsilon}|^{2}\leq2K_{6}R\expo^{K_{6}R^{2}}+2MK_{5}$, in $\overline{B_{R}(0)}$. Note that for all $\epsilon$, there exists $x_{0}\in\overline{B_{R}(0)}$ such that $(M-\epsilon)^{2}\leq|\deri^{1} u^{\varepsilon}(x_{0})|^{2}$. Then 
\begin{align}\label{partu5.0}
(M-\epsilon)^{2}\leq2K_{6}R\expo^{K_{6}R^{2}}+2MK_{5}.
\end{align}
Letting $\epsilon\rightarrow0$ in (\ref{partu5.0}), it follows
$M\leq2K_{6}R\expo^{K_{6}R^{2}}+2K_{5}$. When $x^{*}\in B_{R}(0)$,  we have that $\deri^{1}\varphi(x^{*})=0$ and $\frac{1}{2}\sum_{ij}\sigma_{ij}\partial_{ij}\varphi(x^{*})\leq0$. Then, from (\ref{partu4}), we get 
\begin{equation}\label{partu6}
0\geq M\psi'_{\varepsilon}(g(x^{*}))|\deri^{1}u^{\varepsilon}(x^{*})|^{2}-(K_{8}+MK_{9})|\deri^{1}u^{\varepsilon}(x^{*})|-MK_{7}.
\end{equation}
If $\psi'_{\varepsilon}(g(x^{*}))<1<\frac{1}{\varepsilon}$, by definition of $\psi_{\varepsilon}$, given in (\ref{p12.2}), we obtain that $\psi_{\varepsilon}(g(x^{*}))\leq1$. It follows that  $|\deri^{1}u^{\varepsilon}(x^{*})|\leq2\varepsilon+1\leq2$. Then, by (\ref{partu5}) and arguing as  in (\ref{partu5.0}), we obtain $M\leq4+2K_{5}$. If $\psi'_{\varepsilon}(g(x^{*}))\geq1$, from (\ref{partu6}), we get  
\begin{equation*}
0\geq M|\deri^{1}u^{\varepsilon}(x^{*})|^{2}-(K_{8}+MK_{9})|\deri^{1}u^{\varepsilon}(x^{*})|-MK_{7},
\end{equation*}
and hence it yields
\begin{align*}
|\deri^{1}u^{\varepsilon}(x^{*})|\leq\frac{K_{8}+MK_{9}+((K_{8}+MK_{9})^{2}+4M^{2}K_{7})^{\frac{1}{2}}}{2M}\leq K_{8}+K_{9}+K_{7}^{\frac{1}{2}}.
\end{align*}
Using (\ref{partu5}) and a similar argument that (\ref{partu5.0}), we conclude $M\leq \bigl(K_{8}+K_{9}+K_{7}^{\frac{1}{2}}\bigr)^{2}+2K_{5}$. Therefore, there exists a constant $K_{10}>0$, independent of $\varepsilon$,  such that $|\deri^{1}u^{\varepsilon}|\leq K_{10}$ in $\overline{B_{R}(0)}$ since the constants $K_{5},\dots,K_{9}$ are independent of $\varepsilon$.
\end{proof}

In Lemma \ref{cotaphi}, we shall establish that $\psi_{\varepsilon}(|\deri^{1}u^{\varepsilon}|^{2})$ is locally bounded by a constant independent of $\varepsilon$.  Previous, we give an auxiliary result, whose proof is in the appendix.  
\begin{lema}\label{cotaphi.1.0}
For each cutoff function $\xi$  in $\hol^{\infty}_{\text{c}}(B_{r})$ satisfying $0\leq \xi\leq 1$, with $B_{r}\subset B_{R}(0)$, define the function $\phi:\overline{B_{r}}\longrightarrow\R$ as
\begin{equation}\label{phi.1.0}
    \phi(x)=\xi(x)\psi_{\varepsilon}(g(x)).
  \end{equation} 
Then,
\begin{align}\label{psic1}
\frac{1}{2}\tr(\sigma\deri^{2}\phi)&\geq -K_{11}(K_{12}|\deri^{2}u^{\varepsilon}|+K_{13})+\psi'_{\varepsilon}(g)(\theta\xi|\deri^{2}u^{\varepsilon}|^{2}\notag\\
&\quad-K_{14}|\deri^{2}u^{\varepsilon}|-K_{15}+2\langle\deri^{1}\phi,\deri^{1}u^{\varepsilon}\rangle),
\end{align}
in $B_{r}$, where $K_{11},\dots,K_{15}$ are positive constants independent of $\varepsilon$.
\end{lema}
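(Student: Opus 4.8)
The plan is to differentiate $\phi=\xi\,\psi_{\varepsilon}(g)$ twice, expand $\tfrac12\tr(\sigma\deri^{2}\phi)$ by Leibniz's and the chain rules, and estimate the resulting pieces with the uniform-in-$\varepsilon$ bounds already available (note first that $u^{\varepsilon}\in\hol^{3,\alpha}(\overline{B_{R}(0)})$ by Proposition \ref{intop4.1}, so $g\in\hol^{2,\alpha}$ and $\phi\in\hol^{2,\alpha}(\overline{B_{r}})$, and all derivatives below are classical). Writing $\partial_{i}\psi_{\varepsilon}(g)=\psi'_{\varepsilon}(g)\partial_{i}g$ and $\partial^{2}_{ij}\psi_{\varepsilon}(g)=\psi''_{\varepsilon}(g)\partial_{i}g\,\partial_{j}g+\psi'_{\varepsilon}(g)\partial^{2}_{ij}g$, one obtains
\[
\tfrac12\tr(\sigma\deri^{2}\phi)=(A)+(B)+(C)+(D),
\]
where $(A)=\tfrac12\psi_{\varepsilon}(g)\tr(\sigma\deri^{2}\xi)$, $(B)=\psi'_{\varepsilon}(g)\langle\sigma\deri^{1}\xi,\deri^{1}g\rangle$, $(C)=\tfrac12\xi\psi''_{\varepsilon}(g)\langle\sigma\deri^{1}g,\deri^{1}g\rangle$, and $(D)=\tfrac12\xi\psi'_{\varepsilon}(g)\tr(\sigma\deri^{2}g)$. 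By (H\ref{h3}), $\psi''_{\varepsilon}\geq0$ and $\xi\geq0$, the term $(C)$ is $\geq0$ and is simply discarded.

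A key preliminary, used both for $(A)$ and inside $(D)$, is the a priori bound $0\leq\psi_{\varepsilon}(g)\leq C(1+|\deri^{2}u^{\varepsilon}|)$ with $C$ independent of $\varepsilon$. It follows by solving the NIDD problem (\ref{p.13.0}) for $\psi_{\varepsilon}(|\deri^{1}u^{\varepsilon}|^{2})=h-q'u^{\varepsilon}+\tfrac12\tr(\sigma\deri^{2}u^{\varepsilon})+\langle\deri^{1}u^{\varepsilon},\widetilde{\gamma}\rangle+\inted E(u^{\varepsilon})$ and bounding every term on the right except $\tfrac12\tr(\sigma\deri^{2}u^{\varepsilon})$ by an $\varepsilon$-free constant: $|h|\leq C_{0}$ by (H\ref{h1}), $|u^{\varepsilon}|\leq K_{5}$ by Lemma \ref{inde}, $|\deri^{1}u^{\varepsilon}|\leq K_{10}$ by Lemma \ref{lemafrontera1}, and $|\inted E(u^{\varepsilon})|\leq 2A_{0}\nu_{0}K_{5}$ by Lemma \ref{lemalip.1.0}(i). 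Since $\psi_{\varepsilon}(g)\geq0$ and $|\tr(\sigma\deri^{2}\xi)|\leq C||\deri^{2}\xi||_{\hol^{0}(\overline{B_{r}})}$, this gives $(A)\geq-K_{11}(K_{12}|\deri^{2}u^{\varepsilon}|+K_{13})$. For $(B)$, the first identity in (\ref{intop4.1.3.0}) together with Lemma \ref{lemafrontera1} yields $|\deri^{1}g|\leq 2\sqrt{d}\,K_{10}|\deri^{2}u^{\varepsilon}|$, hence $(B)\geq-\psi'_{\varepsilon}(g)\,C|\deri^{2}u^{\varepsilon}|$.

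The term $(D)$ is the crux. Using the second identity in (\ref{intop4.1.3.0}) I split $\tfrac12\tr(\sigma\deri^{2}g)=\sum_{k}\langle\sigma v^{(k)},v^{(k)}\rangle+\sum_{k}\partial_{k}u^{\varepsilon}\,\tr(\sigma\deri^{2}\partial_{k}u^{\varepsilon})$ with $v^{(k)}=(\partial^{2}_{k1}u^{\varepsilon},\dots,\partial^{2}_{kd}u^{\varepsilon})$; the first sum is $\geq\theta|\deri^{2}u^{\varepsilon}|^{2}$ by (H\ref{h3}). Into the second sum I substitute (\ref{intop4.1.0}) from Proposition \ref{intop4.1}, i.e. $\tr(\sigma\deri^{2}\partial_{k}u^{\varepsilon})=2q'\partial_{k}u^{\varepsilon}-2\langle\deri^{1}\partial_{k}u^{\varepsilon},\widetilde{\gamma}\rangle-2\partial_{k}h-2\inted\partial_{k}E(u^{\varepsilon})+2\psi'_{\varepsilon}(g)\partial_{k}g$. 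The term $2q'|\deri^{1}u^{\varepsilon}|^{2}\geq0$ is discarded; the terms containing $\widetilde{\gamma}$, $\deri^{1}h$ and $\inted\partial_{k}E(u^{\varepsilon})$ are each $\geq-C|\deri^{2}u^{\varepsilon}|-C$ with $C$ $\varepsilon$-free (using Lemmas \ref{inde}, \ref{lemafrontera1}, and the estimate $|\inted\partial_{k}E(u^{\varepsilon})|\leq\nu_{0}||\partial_{k}E(u^{\varepsilon})||_{\hol^{0}(\R^{d})}\leq C\nu_{0}||u^{\varepsilon}||_{\hol^{1}(\overline{B_{R}(0)})}$ coming from (\ref{p13.1.0})). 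There remains $2\psi'_{\varepsilon}(g)\langle\deri^{1}u^{\varepsilon},\deri^{1}g\rangle$; since $\deri^{1}\phi=\psi_{\varepsilon}(g)\deri^{1}\xi+\xi\psi'_{\varepsilon}(g)\deri^{1}g$, we have $\xi\psi'_{\varepsilon}(g)\langle\deri^{1}g,\deri^{1}u^{\varepsilon}\rangle=\langle\deri^{1}\phi,\deri^{1}u^{\varepsilon}\rangle-\psi_{\varepsilon}(g)\langle\deri^{1}\xi,\deri^{1}u^{\varepsilon}\rangle$, so after restoring the outer factor $\xi\psi'_{\varepsilon}(g)$ this contribution becomes $\psi'_{\varepsilon}(g)\cdot2\langle\deri^{1}\phi,\deri^{1}u^{\varepsilon}\rangle-2\psi'_{\varepsilon}(g)\psi_{\varepsilon}(g)\langle\deri^{1}\xi,\deri^{1}u^{\varepsilon}\rangle$, and the last summand is $\geq-\psi'_{\varepsilon}(g)(C|\deri^{2}u^{\varepsilon}|+C)$ again by the a priori bound on $\psi_{\varepsilon}(g)$ and Lemma \ref{lemafrontera1}.

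Collecting the four groups, using $0\leq\xi\leq1$ and $\psi'_{\varepsilon}(g)\geq0$ to bound $\xi\psi'_{\varepsilon}(g)$ below by $0$ in the positive leftovers and to replace it by $\psi'_{\varepsilon}(g)$ in the negative lower-order ones, all pieces assemble into (\ref{psic1}) with $K_{14},K_{15}$ the (finite, $\varepsilon$-free) sums of the constants produced above. The main obstacle is exactly this bookkeeping in $(D)$: one must keep $\theta\xi|\deri^{2}u^{\varepsilon}|^{2}$ and $2\langle\deri^{1}\phi,\deri^{1}u^{\varepsilon}\rangle$ verbatim (both carrying the factor $\psi'_{\varepsilon}(g)$), absorb everything else into constants, and — the delicate point — verify at each step that those constants do not depend on $\varepsilon$. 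This is where Lemmas \ref{inde} and \ref{lemafrontera1} and the estimate (\ref{p13.1.0}) on $E$ are indispensable, since $\psi_{\varepsilon}(g)$ and $\psi'_{\varepsilon}(g)$ themselves are not bounded uniformly in $\varepsilon$.
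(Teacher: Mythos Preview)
Your argument is correct and follows essentially the same route as the paper: expand $\tfrac12\tr(\sigma\deri^{2}\phi)$ by Leibniz and the chain rule, discard the $\psi''_{\varepsilon}$ term by ellipticity, control $\psi_{\varepsilon}(g)$ by $C(1+|\deri^{2}u^{\varepsilon}|)$ via the NIDD equation itself, substitute (\ref{intop4.1.0}) into the $\tr(\sigma\deri^{2}g)$ part to extract $\theta\xi|\deri^{2}u^{\varepsilon}|^{2}$, and rewrite $\xi\psi'_{\varepsilon}(g)\langle\deri^{1}g,\deri^{1}u^{\varepsilon}\rangle$ through $\deri^{1}\phi$. The only cosmetic differences are that the paper groups your terms $(B)$ and $(D)$ together under the common factor $\psi'_{\varepsilon}(g)$ and cites Lemmas \ref{cotaintaf1}--\ref{intcotadif1.1} rather than (\ref{p13.1.0}) directly for the integral estimates; the resulting constants differ in bookkeeping but not in substance.
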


\begin{lema}\label{cotaphi}
Let $B_{r}\subset B_{R}(0)$ be an open ball. For each $\xi\in\hol_{\comp }^{\infty}(B_{r})$ satisfying $0\leq\xi\leq1$,  there exist positive constants $K_{12},\dots,K_{15}$ independent of $\varepsilon$, such that
  \begin{align}\label{cotaphi1.0}
    \xi\psi_{\varepsilon}(|\deri^{1} u^{\varepsilon}|^{2})\leq\frac{K_{12}(K_{14}+(\theta K_{15})^{\frac{1}{2}})}{\theta}+K_{13},\ \text{in}\ B_{r}.  
  \end{align}
The constant $\theta>0$ is as in Hypothesis (H\ref{h3}). 
\end{lema}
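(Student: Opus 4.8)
The plan is a localized maximum-principle argument applied to the function $\phi=\xi\psi_{\varepsilon}(g)$ introduced in Lemma~\ref{cotaphi.1.0}, where $g=|\deri^{1}u^{\varepsilon}|^{2}$. First I would observe that, since $\xi\in\hol^{\infty}_{\comp}(B_{r})$, the function $\phi$ is continuous on $\overline{B_{r}}$, nonnegative, and vanishes outside a compact subset of $B_{r}$; hence $\phi$ attains its maximum over $\overline{B_{r}}$ at some point $x^{*}\in\sop\xi\subset B_{r}$. If $\phi(x^{*})=0$ the asserted bound is immediate (after, if necessary, enlarging $K_{13}$ so that the right-hand side of (\ref{cotaphi1.0}) is $\geq1$), so assume $\phi(x^{*})>0$; then $\xi(x^{*})>0$, $g(x^{*})>1$, and therefore $\psi'_{\varepsilon}(g(x^{*}))>0$. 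As $x^{*}$ is an interior maximum, $\deri^{1}\phi(x^{*})=0$ and $\deri^{2}\phi(x^{*})$ is negative semidefinite, so the ellipticity (H\ref{h3}) of $\sigma$ gives $\tfrac12\tr(\sigma\deri^{2}\phi(x^{*}))\leq0$. Substituting this and $\deri^{1}\phi(x^{*})=0$ into (\ref{psic1}), the cross term $2\langle\deri^{1}\phi,\deri^{1}u^{\varepsilon}\rangle$ drops and we obtain
\[
0\geq-K_{11}\bigl(K_{12}|\deri^{2}u^{\varepsilon}(x^{*})|+K_{13}\bigr)+\psi'_{\varepsilon}(g(x^{*}))\bigl(\theta\xi(x^{*})|\deri^{2}u^{\varepsilon}(x^{*})|^{2}-K_{14}|\deri^{2}u^{\varepsilon}(x^{*})|-K_{15}\bigr).
\]

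Next I would record a pointwise bound for $\psi_{\varepsilon}(g)$ coming from the equation. Rearranging (\ref{p.13.0}), $\psi_{\varepsilon}(g)=h-q'u^{\varepsilon}+\tfrac12\tr(\sigma\deri^{2}u^{\varepsilon})+\langle\deri^{1}u^{\varepsilon},\widetilde{\gamma}\rangle+\inted E(u^{\varepsilon})$ in $B_{R}(0)$; estimating the right-hand side with (H\ref{h1}), Lemma~\ref{inde} together with $u^{\varepsilon}\geq0$ (Proposition~\ref{upos1}), Lemma~\ref{lemafrontera1}, and Lemma~\ref{lemalip.1.0}(i), and using $\tr(\sigma\deri^{2}u^{\varepsilon})\leq c\,|\deri^{2}u^{\varepsilon}|$ for a constant $c=c(\Theta,d)$, yields constants $C,C'>0$ independent of $\varepsilon$ with $\psi_{\varepsilon}(g(x))\leq C+C'|\deri^{2}u^{\varepsilon}(x)|$ for all $x\in B_{R}(0)$. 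This is the device that converts a bound on a $\xi$-weighted Hessian at $x^{*}$ into a bound on $\phi(x^{*})$.

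Then I would split into two cases according to the size of $\psi'_{\varepsilon}(g(x^{*}))$, which is where the $\varepsilon$-dependence must be controlled. Since $\psi'_{\varepsilon}$ is nondecreasing and equals $1/\varepsilon>1$ on $[1+2\varepsilon,\infty)$ by (\ref{p12.1})--(\ref{p12.2}), the inequality $\psi'_{\varepsilon}(g(x^{*}))<1$ forces $1<g(x^{*})<1+2\varepsilon$, whence $\psi_{\varepsilon}(g(x^{*}))=\psi\!\bigl(\tfrac{g(x^{*})-1}{\varepsilon}\bigr)\leq\psi(2)=1$ and $\phi(x^{*})\leq1$. If instead $\psi'_{\varepsilon}(g(x^{*}))\geq1$, I divide the displayed inequality by $\psi'_{\varepsilon}(g(x^{*}))$ and bound the $K_{11}$-terms using $\psi'_{\varepsilon}(g(x^{*}))\geq1$, getting $\theta\xi(x^{*})|\deri^{2}u^{\varepsilon}(x^{*})|^{2}\leq A|\deri^{2}u^{\varepsilon}(x^{*})|+B$ with $A\eqdef K_{11}K_{12}+K_{14}$ and $B\eqdef K_{11}K_{13}+K_{15}$ both independent of $\varepsilon$. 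Multiplying through by $\xi(x^{*})\leq1$ completes the square on the left, $\theta\bigl(\xi(x^{*})|\deri^{2}u^{\varepsilon}(x^{*})|\bigr)^{2}\leq A\bigl(\xi(x^{*})|\deri^{2}u^{\varepsilon}(x^{*})|\bigr)+B$, so $\xi(x^{*})|\deri^{2}u^{\varepsilon}(x^{*})|\leq\frac{A+\sqrt{A^{2}+4\theta B}}{2\theta}$. Combining with the equation-derived estimate, $\phi(x^{*})=\xi(x^{*})\psi_{\varepsilon}(g(x^{*}))\leq C+C'\,\xi(x^{*})|\deri^{2}u^{\varepsilon}(x^{*})|$, and using $\sqrt{A^{2}+4\theta B}\leq A+2\sqrt{\theta B}$, I obtain an $\varepsilon$-independent bound which, after relabelling the constants, takes the form $\frac{K_{12}(K_{14}+(\theta K_{15})^{1/2})}{\theta}+K_{13}$. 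Since $\xi\psi_{\varepsilon}(|\deri^{1}u^{\varepsilon}|^{2})=\phi\leq\phi(x^{*})$ throughout $B_{r}$, this proves (\ref{cotaphi1.0}).

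The main obstacle is not the maximum-principle mechanics, which are routine once Lemma~\ref{cotaphi.1.0} is available, but two bookkeeping issues that must be dealt with together: the factor $\psi'_{\varepsilon}(g(x^{*}))$ is unbounded as $\varepsilon\to0$, forcing the dichotomy on its size rather than a naive division; and the cutoff enters (\ref{psic1}) as $\xi|\deri^{2}u^{\varepsilon}|^{2}$ rather than $|\deri^{2}u^{\varepsilon}|^{2}$, so one must multiply the resulting quadratic inequality by $\xi(x^{*})\leq1$ to recover a clean bound on the weighted quantity $\xi(x^{*})|\deri^{2}u^{\varepsilon}(x^{*})|$, which is precisely what the bound $\psi_{\varepsilon}(g)\leq C+C'|\deri^{2}u^{\varepsilon}|$ turns into the desired estimate for $\phi$.
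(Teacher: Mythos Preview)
Your proposal is correct and follows essentially the same route as the paper: maximize $\phi=\xi\psi_{\varepsilon}(g)$, apply the differential inequality (\ref{psic1}) at the interior maximum, split on the size of $\psi'_{\varepsilon}(g(x^{*}))$, and convert the resulting Hessian bound into a bound on $\phi(x^{*})$ via the equation-derived estimate $\psi_{\varepsilon}(g)\leq K_{12}|\deri^{2}u^{\varepsilon}|+K_{13}$ (which is exactly (\ref{cotgrad1}) in the paper). Your handling of the $K_{11}$-terms by dividing through by $\psi'_{\varepsilon}(g(x^{*}))\geq1$ is in fact more explicit than the paper's write-up, and your trick of multiplying the quadratic inequality by $\xi(x^{*})\leq1$ is a cosmetic variant of the paper's step where the $\xi(x^{*})$ in the denominator of the quadratic-formula bound for $|\deri^{2}u^{\varepsilon}(x^{*})|$ cancels against the $\xi(x^{*})$ in $\phi(x^{*})=\xi(x^{*})\psi_{\varepsilon}(g(x^{*}))$.
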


\begin{proof}
Let $B_{r}\subset B_{R}(0)$ and for each cutoff function $\xi$ in $\hol^{\infty}_{\text{c}}(B_{r})$ satisfying $0\leq \xi\leq 1$, define $\phi$ as in (\ref{phi.1.0}). Taking $x^{*}\in\overline{B_{r}}$ as a point where $\phi$ attains its maximum on $B_{r}$, it  suffices to bound $\phi(x^{*})$ by a constant independent of $\varepsilon$. If $x^{*}\in\partial B_{r}$ then $\phi(x)\leq\phi(x^{*})=0$. When $x^{*}\in B_{R}(0)$,  we have $\deri^{1} \phi(x^{*})=0$ and $\frac{1}{2}\tr(\sigma\deri^{2}\phi(x^{*}))\leq0$. Then, from (\ref{psic1}), we get that
\begin{align}\label{psic1.0}
0\geq& -K_{11}(K_{12}|\deri^{2}u^{\varepsilon}(x^{*})|+K_{13})\notag\\
&\quad+\psi'_{\varepsilon}(g(x^{*}))(\theta\xi(x^{*})|\deri^{2}u^{\varepsilon}(x^{*})|^{2}-K_{14}|\deri^{2}u^{\varepsilon}(x^{*})|-K_{15}),
\end{align}
where $K_{11},\dots,K_{15}$ are constants as in Lemma \ref{cotaphi.1.0}. If $\psi'_{\varepsilon}(g(x^{*}))\leq1<\frac{1}{\varepsilon}$, by the definition of $\psi_{\varepsilon}$,  given in   (\ref{p12.1}), we obtain that  $\psi_{\varepsilon}(g(x^{*}))\leq1$. Then, $\phi(x)\leq\phi(x^{*})\leq1$. In the case where $\psi'_{\varepsilon}(g(x^{*}))\geq1$, from (\ref{psic1.0}), we get that 
\begin{equation*}
0\geq \theta\xi(x^{*})|\deri^{2}u^{\varepsilon}(x^{*})|^{2}-K_{14}|\deri^{2}u^{\varepsilon}(x^{*})|-K_{15}, 
\end{equation*}
and hence it follows $|\deri^{2}u^{\varepsilon}(x^{*})|\leq\frac{K_{14}+(K_{14}^{2}+4\theta\xi(x^{*})K_{15})^{\frac{1}{2}}}{2\theta\xi(x^{*})}$. Therefore, from this and (\ref{cotgrad1}), we conclude \eqref{cotaphi1.0}. We finish the proof. 
\end{proof}

\begin{lema}\label{lemafrontera3}
Let $1\leq p< \infty$ and $\beta\in(0,1)$ such that $B_{\beta'r}\subset B_{R}(0)$, with  $\beta'= \frac{\beta+1}{2}$. There exists a constant $K_{18}=K_{18}(\beta r,p)>0$ independent of $\varepsilon$  such that
\begin{align}\label{eqW1.1}
    ||\deri^{2}u^{\varepsilon}||_{\Lp^{p}(B_{\beta r})}&\leq K_{18}\bigr(||h||_{\Lp^{p}(B_{\beta' r})}+||\inted E(u^{\varepsilon})||_{\Lp^{p}(B_{\beta' r})}+||\deri^{1}u^{\varepsilon}||_{\Lp^{p}(B_{\beta' r})}\notag\\
  &\quad+||\xi\psi_{\varepsilon}(|\deri^{1}u^{\varepsilon}|)||_{\Lp^{p}(B_{\beta' r})}+||u^{\varepsilon}||_{\Lp^{p}(B_{\beta' r})}\bigl),
\end{align}
with  $\beta'=2^{-1}(\beta+1)$.
\end{lema}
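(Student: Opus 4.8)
The plan is to recast the NIDD problem as a linear constant‑coefficient elliptic equation with an $\Lp^{p}$ right‑hand side and then apply the interior Calder\'on--Zygmund estimate. First I would rewrite (\ref{p.13.0}), using the splitting $\dif^{'}u^{\varepsilon}=\frac{1}{2}\tr(\sigma\deri^{2}u^{\varepsilon})+\langle\deri^{1}u^{\varepsilon},\widetilde{\gamma}\rangle$ from (\ref{p.1.0.0}), in the form
\begin{equation*}
\tfrac{1}{2}\tr(\sigma\deri^{2}u^{\varepsilon})=f^{\varepsilon}\ \text{in}\ B_{R}(0),\qquad f^{\varepsilon}\eqdef q'u^{\varepsilon}-\langle\deri^{1}u^{\varepsilon},\widetilde{\gamma}\rangle-\inted E(u^{\varepsilon})+\psi_{\varepsilon}(|\deri^{1}u^{\varepsilon}|^{2})-h.
\end{equation*}
By Theorem \ref{solNIDD.1.0} and Proposition \ref{intop4.1} we have $u^{\varepsilon}\in\hol^{3,\alpha}(\overline{B_{R}(0)})$; since $h\in\hol^{2}(\overline{B_{R}(0)})$ by (H\ref{h1}), $\inted E(u^{\varepsilon})\in\hol^{0,\alpha}(\R^{d})$ by Corollary \ref{lemalip}, and $\psi_{\varepsilon}$ is smooth, the function $f^{\varepsilon}$ is continuous on $\overline{B_{R}(0)}$, hence $f^{\varepsilon}\in\Lp^{p}(B_{\beta' r})$ for every $1\leq p<\infty$.

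Next, since $\frac{1}{2}\sigma$ is a constant matrix satisfying the ellipticity condition (H\ref{h3}), I would invoke the interior $\Lp^{p}$‑estimate for second‑order elliptic operators (Gilbarg--Trudinger, \cite[Thm.\ 9.11]{gilb}) on the nested balls $B_{\beta r}\subset\subset B_{\beta' r}\subset B_{R}(0)$ — note $\beta r<\beta' r<r\leq R$ since $\beta<\beta'=(\beta+1)/2<1$ — to obtain a constant $C$ depending only on $d$, $p$, $\theta$, $\Theta$ and the radii $\beta r,\beta' r$, and hence independent of $\varepsilon$, such that
\begin{equation*}
\|u^{\varepsilon}\|_{\sob^{2,p}(B_{\beta r})}\leq C\bigl(\|u^{\varepsilon}\|_{\Lp^{p}(B_{\beta' r})}+\|f^{\varepsilon}\|_{\Lp^{p}(B_{\beta' r})}\bigr).
\end{equation*}
(Equivalently, one may fix a cutoff $\xi\in\hol^{\infty}_{\comp}(B_{r})$ with $0\leq\xi\leq1$ and $\xi\equiv1$ on $\overline{B_{\beta' r}}$, which exists because $\beta' r<r$, and apply the global Calder\'on--Zygmund estimate to $\xi u^{\varepsilon}$.) It then remains to bound $\|f^{\varepsilon}\|_{\Lp^{p}(B_{\beta' r})}$ by the triangle inequality, splitting it into the five pieces $\|h\|_{\Lp^{p}}$, $\|\inted E(u^{\varepsilon})\|_{\Lp^{p}}$, $|\widetilde{\gamma}|\,\|\deri^{1}u^{\varepsilon}\|_{\Lp^{p}}$, $\|\psi_{\varepsilon}(|\deri^{1}u^{\varepsilon}|^{2})\|_{\Lp^{p}}$ and $q'\|u^{\varepsilon}\|_{\Lp^{p}}$ over $B_{\beta' r}$; since $\xi\equiv1$ there, the fourth piece equals $\|\xi\psi_{\varepsilon}(|\deri^{1}u^{\varepsilon}|^{2})\|_{\Lp^{p}(B_{\beta' r})}$. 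Combining the two displays and setting $K_{18}\eqdef C\max\{1,q',|\widetilde{\gamma}|\}$ yields (\ref{eqW1.1}).

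The calculation is entirely routine; the one point I would be careful about is the $\varepsilon$‑independence of $K_{18}$. This holds because the constant in \cite[Thm.\ 9.11]{gilb} is controlled solely by the ellipticity constants of $\frac{1}{2}\sigma$, the (here trivial, since constant) modulus of continuity of its entries, the exponent $p$, the dimension $d$, and the geometry of the inclusion $B_{\beta r}\subset\subset B_{\beta' r}$ — none of which depends on $\varepsilon$ — while $q'$ and $\widetilde{\gamma}$ are fixed. A secondary, easy point is to confirm $f^{\varepsilon}\in\Lp^{p}$, which is immediate from the $\hol^{3,\alpha}(\overline{B_{R}(0)})$‑regularity of $u^{\varepsilon}$ together with Corollary \ref{lemalip}; no uniform‑in‑$\varepsilon$ bound on $f^{\varepsilon}$ itself is needed at this stage, since the lemma only records how $\|\deri^{2}u^{\varepsilon}\|_{\Lp^{p}(B_{\beta r})}$ is controlled by the remaining norms, which are in turn bounded uniformly in $\varepsilon$ by Lemmas \ref{inde}, \ref{lemafrontera1} and \ref{cotaphi}.
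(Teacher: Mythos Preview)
Your proposal is correct and follows essentially the same approach as the paper: both isolate the second-order part of the equation and apply an interior $\Lp^{p}$ Calder\'on--Zygmund estimate whose constant depends only on the fixed ellipticity of $\sigma$ and the geometry of the nested balls, hence is independent of $\varepsilon$. The only cosmetic difference is that the paper carries out the localization by hand---setting $w=\xi u^{\varepsilon}$ with $\xi\in\hol^{\infty}_{\comp}(B_{\beta' r})$, $\xi=1$ on $B_{\beta r}$, deriving the Dirichlet problem $q'w-\dif' w=f$ on $B_{\beta' r}$ (so that the factor $\xi\psi_{\varepsilon}$ appears directly in $f$), and then invoking a global estimate from \cite[Lem.\ 3.1]{lenh}---whereas you cite the packaged interior estimate \cite[Thm.\ 9.11]{gilb}.
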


\begin{proof}
Let $r>0$, $\beta\in(0,1)$ and  $\xi\in\hol^{\infty}_{\comp}(B_{r})$ a cutoff function such that $0\leq\xi\leq1$, $\xi=1$ on $B_{\beta r}$ and $\xi=0$ on $B_{\beta' r}^{\comp}$, with $\beta'=\frac{\beta+1}{2}$. Suppose that  $|\deri^{1}\xi|\leq K_{16}$ and $|\deri^{2}\xi|\leq K_{16}$, for some constant $K_{16}>0$. Defining $w=\xi u^{\varepsilon}$, we obtain 
\begin{align}\label{eqW1}
||\deri^{2}u^{\varepsilon}||_{\Lp^{p}(B_{\beta r})}&\leq ||\deri^{2}u^{\varepsilon}||_{\Lp^{p}(B_{\beta r})}+||\deri^{2}\xi u^{\varepsilon}||_{\Lp^{p}(B_{\beta' r}\setminus B_{\beta r})}\notag\\
&= ||\deri^{2}w||_{\Lp^{p}(B_{\beta' r})}.
\end{align}
Calculating first and second derivatives of $w$ in $B_{\beta'r}$,
\begin{align*}
\partial_{i}w&=u^{\varepsilon}\partial_{i}\xi+\partial_{i}u^{\varepsilon}\xi,\\
\partial_{ji}^{2}w&=\partial_{j}u^{\varepsilon}\partial_{i}\xi+u^{\varepsilon}\partial_{ji}^{2}\xi+\partial_{i}u^{\varepsilon}\partial_{j}\xi+\xi\partial_{ji}^{2}u^{\varepsilon},
\end{align*}
with $j,i\in\{1,\dots,d\}$,  by  (\ref{p.13.0}), we get that 
\begin{equation}\label{dirif1}
\begin{cases}
q'w- \dif' w=f, &\text{in}\ B_{\beta'r}, \\
w=0, & \text{on}\ \partial B_{\beta'r},
\end{cases}
\end{equation}
where
\begin{align}\label{eqW1.2}
f&\eqdef\xi(h+\inte E(u^{\varepsilon})-\psi_{\varepsilon}(|\deri^{1} u^{\varepsilon}|^{2}))\notag\\
&\quad-u^{\varepsilon}\biggr(\frac{1}{2}\tr(\sigma\deri^{2}\xi)+\langle\deri^{1} \xi,\widetilde{\gamma}\rangle\biggl)-\langle\sigma\deri^{1}\xi,\deri^{1} u^{\varepsilon}\rangle.
\end{align}
We know that for  the linear Dirichlet problem (\ref{dirif1})  (see \cite[Lemma 3.1]{lenh}), there exists a constant $K_{17}=K_{17}(\beta r,p)>0$ independent of $w$, such that $||\deri^{2}w||_{\Lp^{p}(B_{\beta' r})}\leq K_{17}||f||_{\Lp^{p}(B_{\beta' r})}$. Estimating the terms on the right hand side of  (\ref{eqW1.2}) with the norm $||\cdot||_{\Lp^{p}(B_{\beta' r})}$ and by the choice of $\xi$, it follows
\begin{align}\label{eqW1.3}
||\deri^{2}w||_{\Lp^{p}(B_{\beta' r})}&\leq K_{18}\bigr(||h||_{\Lp^{p}(B_{\beta' r})}+||\inted E(u^{\varepsilon})||_{\Lp^{p}(B_{\beta' r})}+||\deri^{1}u^{\varepsilon}||_{\Lp^{p}(B_{\beta' r})}\notag\\
  &\quad+||\xi\psi_{\varepsilon}(|\deri^{1}u^{\varepsilon}|^{2})||_{\Lp^{p}(B_{\beta' r})}+||u^{\varepsilon}||_{\Lp^{p}(B_{\beta' r})}\bigl),
\end{align}
for some constant $K_{18}=K_{18}(\beta r,p)>0$ independent of $\varepsilon$. Hence, from (\ref{eqW1}) and (\ref{eqW1.3}), we have the inequality (\ref{eqW1.1}).
\end{proof}

By (\ref{normw}) and Lemma \ref{lemafrontera3} it is easy to obtain  the following result:
\begin{lema}\label{lemafrontera4}
Let  $1\leq p<\infty$ and $\beta\in(0,1)$ such that $B_{\beta'r}\subset B_{R}(0)$, with  $\beta'= \frac{\beta+1}{2}$. There exists a  constant $K_{19}>0$ independent of $\varepsilon$ such that
\begin{align*}
  ||u^{\varepsilon}||_{\text{W}^{2,p}(B_{\beta r})}&\leq K_{19}(||h||_{\Lp^{p}(B_{\beta' r})}+||\inted E(u^{\varepsilon})||_{\Lp^{p}(B_{\beta' r})}\notag\\
  &\quad+||\xi\psi_{\varepsilon}(|\deri^{1}u^{\varepsilon}|)||_{\Lp^{p}(B_{\beta' r})}+||\deri^{1}u^{\varepsilon}||_{\Lp^{p}(B_{\beta' r})}+||u^{\varepsilon}||_{\Lp^{p}(B_{\beta' r})}),
\end{align*}
\textit{with  $\beta'=\frac{\beta+1}{2}$.}
\end{lema}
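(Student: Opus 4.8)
The plan is to unpack the definition of the Sobolev norm in (\ref{normw}) and control each block of derivatives separately. Writing $||u^{\varepsilon}||^{p}_{\sob^{2,p}(B_{\beta r})}=||u^{\varepsilon}||^{p}_{\Lp^{p}(B_{\beta r})}+||\deri^{1}u^{\varepsilon}||^{p}_{\Lp^{p}(B_{\beta r})}+||\deri^{2}u^{\varepsilon}||^{p}_{\Lp^{p}(B_{\beta r})}$, where each term abbreviates the sum over the corresponding multi-indices, I would first note that $\beta<\beta'$ forces $B_{\beta r}\subset B_{\beta' r}$, so the zeroth- and first-order blocks are immediately dominated by $||u^{\varepsilon}||_{\Lp^{p}(B_{\beta' r})}$ and $||\deri^{1}u^{\varepsilon}||_{\Lp^{p}(B_{\beta' r})}$ respectively, with constants depending only on $p$ and $d$.

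For the second-order block I would invoke Lemma \ref{lemafrontera3} directly: it gives $||\deri^{2}u^{\varepsilon}||_{\Lp^{p}(B_{\beta r})}\leq K_{18}\bigl(||h||_{\Lp^{p}(B_{\beta' r})}+||\inted E(u^{\varepsilon})||_{\Lp^{p}(B_{\beta' r})}+||\deri^{1}u^{\varepsilon}||_{\Lp^{p}(B_{\beta' r})}+||\xi\psi_{\varepsilon}(|\deri^{1}u^{\varepsilon}|)||_{\Lp^{p}(B_{\beta' r})}+||u^{\varepsilon}||_{\Lp^{p}(B_{\beta' r})}\bigr)$ with $K_{18}=K_{18}(\beta r,p)$ independent of $\varepsilon$. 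Summing this with the two trivial bounds from the previous paragraph, and using that $(a+b+c)^{1/p}\leq a^{1/p}+b^{1/p}+c^{1/p}$ for $p\geq1$ — equivalently, working throughout with the norm defined as the sum of the $\Lp^{p}$-norms of the derivative blocks, which is comparable to (\ref{normw}) up to constants depending only on $p$ and $d$ — yields the stated inequality, with $K_{19}$ a purely numerical multiple of $\max\{1,K_{18}\}$.

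The only bookkeeping points to watch are the passage between the $p$-th-power form of the norm in (\ref{normw}) and the additive form appearing in the statement, which is harmless since the two are equivalent with constants depending only on $p$ and the number of multi-indices of order at most $2$, and the verification that no constant introduced along the way depends on $\varepsilon$. The latter is immediate: the inclusion $B_{\beta r}\subset B_{\beta' r}$ is independent of $\varepsilon$, and $K_{18}$ from Lemma \ref{lemafrontera3} already is. There is no genuine obstacle here; the lemma is simply the combination of Lemma \ref{lemafrontera3} with the monotonicity of $\Lp^{p}$-norms under restriction of the domain, which is why it is ``easy to obtain''.
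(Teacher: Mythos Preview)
Your proposal is correct and is exactly the argument the paper intends: it states only that the result follows from (\ref{normw}) and Lemma \ref{lemafrontera3}, which is precisely your decomposition into the trivial zeroth- and first-order bounds plus the second-order estimate from Lemma \ref{lemafrontera3}. The bookkeeping you flag about norm equivalence is the only content, and you handle it correctly.
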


\section[Existence, uniqueness and regularity]{Existence, uniqueness and regularity to the HJB equation (\ref{p.1.0})}\label{chER}

In this section, we shall present  the proof of Theorem \ref{princ1.0.1}. Note that the HJB equation (\ref{p.1.0}) can be written as
\begin{equation}\label{HJB2}
\begin{cases}
  \max\{qu-\dif u-\inted^{'}E(u) -h,|\deri^{1} u|^{2}-1\}=0,&\text{in}\ B_{R}(0),\\
  u=0, &\text{on}\ \partial B_{R}(0),
\end{cases}  
\end{equation}
where $q>0$, $\dif$ and $\inte^{'}$ are defined as in \eqref{intop0}. In order to prove Theorem \ref{princ1.0.1}, first we shall verify the existence and regularity of the solution to HJB equation (\ref{p.1.0}). Finally, we shall prove the uniqueness of the solution to the HJB equation (\ref{p.1.0}). To verify this last part, we use  Bony's maximum principle \cite{lions}.

Before the proof of Theorem \ref{princ1.0.1}, we shall introduce some preliminary results. By Lemmas \ref{inde} and \ref{lemafrontera1}, we obtain that there exists a constant $K_{20}>0$ independent of $\varepsilon$, such that
\begin{align}
||u^{\varepsilon}||_{\hol^{0,1}(\overline{B_{R}(0)})}&<K_{20},\ \text{for all}\ \varepsilon\in(0,1),\label{sob1}
\end{align}
Moreover, Proposition \ref{cotEu1} and Lemmas \ref{inde}, \ref{lemafrontera1}, \ref{cotaphi}--\ref{lemafrontera4}, guarantee that for each $B_{r}\subset B_{R}(0)$ there exist positive constants $K_{21},K_{22}$ independent of $\varepsilon$ such that
\begin{equation}\label{sob2}
\begin{cases}
||\deri^{2}u^{\varepsilon}||_{\Lp^{p}(B_{\beta r})}\leq K_{21},\\
||u^{\varepsilon}||_{\sob^{2,p}(B_{\beta r})}<K_{22},
\end{cases}
\end{equation}
for all $\varepsilon\in(0,1)$, where $\beta\in(0,1)$ and $1\leq p<\infty$ fixed. Finally, if we take $d<p<\infty$ in (\ref{sob2}), then, from Sobolev embedding  Theorem \cite[Thm. 4.12, p. 85]{adams}, we have that for each $B_{r}\subset B_{R}(0)$, there exists a positive constant $K_{23}$ independent of $\varepsilon$ such that
\begin{equation}\label{hol.1.1.0.1}
||u^{\varepsilon}||_{\hol^{1,\alpha'}(\overline{B_{\beta r}})}\leq K_{23},\ \text{for all}\ \varepsilon\in(0,1),
\end{equation}
with $\beta\in(0,1)$ fixed and $\alpha'=1-\frac{d}{p}$. 

As a consequence of Arzel\`a-Ascoli Theorem and the reflexivity of $\Lp^{p}(B_{r})$; see \cite[Thm. 7.25, p. 158 and Thm. 2.46, p. 49, respectively]{rudin, adams},  and (\ref{sob1})--(\ref{hol.1.1.0.1}), we obtain the following key result.
\begin{lema}\label{converu1}
Let $d< p<\infty$. There exist a decreasing subsequence $\{\varepsilon_{\kappa(\iota)}\}_{\iota\geq1}$, with $\varepsilon_{\kappa(\iota)}\underset{\iota\rightarrow\infty}{\longrightarrow}0$, and  $u\in\hol^{0,1}(\overline{B_{R}(0)})\cap\sob^{2,p}_{\loc}(B_{R}(0))$ such that 
\begin{equation}\label{conver.1.0}
   \begin{cases}
   u^{\varepsilon_{\kappa(\iota)}}\underset{\varepsilon_{\kappa(\iota)}\rightarrow0}{\longrightarrow}u, &\text{in}\ \hol^{0}(\overline{B_{R}(0)}),\\
   \partial_{i}u^{\varepsilon_{\kappa(\iota)}}\underset{\varepsilon_{\kappa(\iota)}\rightarrow0}{\longrightarrow}\partial_{i}u, &\text{in}\ \hol^{0}_{\loc}(B_{R}(0)),\ i\in\{1,\dots,d\},\\
   \partial_{ij}^{2}u^{\varepsilon_{\kappa(\iota)}}\underset{\varepsilon_{\kappa(\iota)}\rightarrow0}{\longrightarrow}\partial^{2}_{ij}u, &\text{weakly  in}\ \Lp^{p}_{\loc}(B_{R}(0)),\ i,j\in\{1,\dots,d\},
   \end{cases}
  \end{equation}
  where $\partial_{ij}u$ represents the second weakly derivative of $u$, with $i,j\in\{1,\dots,d\}$. Moreover, the following convergence also holds
\begin{equation}\label{conver1.0.3}
\inted E(u^{\varepsilon_{\kappa(\iota)}})\underset{\varepsilon_{\kappa(\iota)}\rightarrow0}{\longrightarrow}\inted E (u_{r}),\ \text{uniformly in $B_{R}(0)$}. 
\end{equation}
\end{lema}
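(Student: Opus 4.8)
The plan is a compactness-and-diagonalization argument resting entirely on the three $\varepsilon$-uniform estimates (\ref{sob1})--(\ref{hol.1.1.0.1}). First I would invoke (\ref{sob1}): the family $\{u^{\varepsilon}\}_{\varepsilon\in(0,1)}$ is uniformly bounded and equi-Lipschitz on the compact set $\overline{B_{R}(0)}$, so by the Arzel\`a--Ascoli theorem there is a decreasing sequence $\varepsilon_{\kappa}\downarrow0$ along which $u^{\varepsilon_{\kappa}}\to u$ uniformly on $\overline{B_{R}(0)}$. Passing to the limit in the Lipschitz bound shows that $u$ retains the constant $K_{20}$, so $u\in\hol^{0,1}(\overline{B_{R}(0)})$; this already yields the first line of (\ref{conver.1.0}).

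Next I would fix an increasing exhaustion of $B_{R}(0)$ by balls $B_{\beta_{n}r_{n}}$ with $B_{\beta_{n}'r_{n}}\subset B_{R}(0)$ and $\beta_{n}'=\frac{\beta_{n}+1}{2}$, so that every compact subset of $B_{R}(0)$ lies in some $B_{\beta_{n}r_{n}}$. On each such ball, (\ref{hol.1.1.0.1}) bounds $\{\partial_{i}u^{\varepsilon}\}$ uniformly in $\hol^{0,\alpha'}(\overline{B_{\beta_{n}r_{n}}})$, hence this family is equicontinuous and uniformly bounded there, and Arzel\`a--Ascoli gives a further subsequence along which each $\partial_{i}u^{\varepsilon}$ converges uniformly on $\overline{B_{\beta_{n}r_{n}}}$; since $u^{\varepsilon_{\kappa}}\to u$ uniformly, the classical theorem on uniform convergence of derivatives identifies this limit with $\partial_{i}u$. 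Likewise (\ref{sob2}) bounds $\{\partial^{2}_{ij}u^{\varepsilon}\}$ in $\Lp^{p}(B_{\beta_{n}r_{n}})$, which is reflexive since $1<p<\infty$, so along a further subsequence $\partial^{2}_{ij}u^{\varepsilon}\rightharpoonup v_{ij}$ weakly in $\Lp^{p}(B_{\beta_{n}r_{n}})$. Letting $\varepsilon\to0$ in the distributional identity $\int u^{\varepsilon}\,\partial^{2}_{ij}\phi\,\der x=\int\partial^{2}_{ij}u^{\varepsilon}\,\phi\,\der x$, valid for all $\phi\in\hol^{\infty}_{\comp}(B_{\beta_{n}r_{n}})$ -- using uniform convergence on the left and weak convergence on the right -- forces $v_{ij}=\partial^{2}_{ij}u$ in the distributional sense, whence $u\in\sob^{2,p}_{\loc}(B_{R}(0))$. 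Taking a diagonal subsequence over $n$ and over the (finitely many) indices $i,j$ then produces one subsequence $\{\varepsilon_{\kappa(\iota)}\}_{\iota\geq1}$ that realizes all three convergences in (\ref{conver.1.0}) simultaneously.

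Finally, for (\ref{conver1.0.3}) I would use the linearity of $E$ and $\inted$ together with Lemma \ref{lemalip.1.0}(i): for every $x\in\R^{d}$,
\begin{equation*}
\bigl|\inted E(u^{\varepsilon_{\kappa(\iota)}})(x)-\inted E(u)(x)\bigr|=\bigl|\inted E\bigl(u^{\varepsilon_{\kappa(\iota)}}-u\bigr)(x)\bigr|\leq 2A_{0}\nu_{0}\,\bigl\|u^{\varepsilon_{\kappa(\iota)}}-u\bigr\|_{\hol^{0}(\overline{B_{R}(0)})},
\end{equation*}
and the right-hand side tends to $0$ by the uniform convergence obtained in the first step (reading $u_{r}$ in (\ref{conver1.0.3}) as $u$). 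The one point that needs genuine care -- and hence the main, albeit modest, obstacle -- is organizing the nested extractions so that a single subsequence serves the $\hol^{0}$, $\hol^{0}_{\loc}$ and weak-$\Lp^{p}_{\loc}$ statements at once, and checking that the weak $\Lp^{p}$ limit of the second derivatives is indeed the distributional Hessian of the uniform limit $u$ and not merely some unrelated $\Lp^{p}$ field.
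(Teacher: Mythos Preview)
Your proposal is correct and follows essentially the same route as the paper: Arzel\`a--Ascoli via (\ref{sob1}) and (\ref{hol.1.1.0.1}) for the $\hol^{0}$ and $\hol^{0}_{\loc}$ convergences, reflexivity of $\Lp^{p}$ via (\ref{sob2}) for the weak convergence of second derivatives, and the estimate $|\inted E(w)|\leq 2A_{0}\nu_{0}\|w\|_{\hol^{0}}$ for (\ref{conver1.0.3}). If anything, you are more careful than the paper in spelling out the exhaustion/diagonalization and the identification of the weak $\Lp^{p}$ limit with the distributional Hessian of $u$; the paper's own proof leaves these points implicit.
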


\subsection[Proof. Existence and uniqueness]{Proof of Theorem \ref{princ1.0.1}}\label{extuniqu1}

We proceed to show the existence and uniqueness to the solution of the HJB equation (\ref{p.1.0}).
\begin{proof}[Proof of Theorem \ref{princ1.0.1}. Existence and regularity]
Let $d<p<\infty$. From Lemma \ref{converu1}, we know  that there exist a decreasing subsequence $\{\varepsilon_{\kappa(\iota)}\}_{\iota\geq1}$, with $\varepsilon_{\kappa(\iota)}\underset{\iota\rightarrow\infty}{\longrightarrow}0$, and  $u\in\hol^{0,1}(\overline{B_{R}(0)})\cap\sob^{2,p}_{\loc}(B_{R}(0))$ satisfying (\ref{conver.1.0}) and (\ref{conver1.0.3}). Let $\phi$ be a non-negative function in $\hol^{\infty}_{\comp}(B_{r})$, where $B_{r}\subset B_{R}(0)$.  Since for each $\varepsilon_{\kappa(\iota)}\in(0,1)$, the function $u^{\varepsilon_{\kappa(\iota)}}$ is the unique solution of  the NIDD problem (\ref{p.13.0}), we get
\begin{equation}\label{ineq.1.1}
\int_{B_{r}}(q'u^{\varepsilon_{\kappa(\iota)}}-\dif^{'} u^{\varepsilon_{\kappa(\iota)}}-\inted E(u^{\varepsilon_{\kappa(\iota)}}))\phi\der x\leq \int_{B_{ r}}h\phi\der x.
\end{equation}
Furthermore, from Lemma \ref{converu1}, we have
\begin{align}\label{ineq.1.1.0}
\biggl|\int_{B_{r}}&(q'(u^{\varepsilon_{\kappa(\iota)}}-u)-\dif^{'}(u^{\varepsilon_{\kappa(\iota)}}-u)-\inte E(u^{\varepsilon_{\kappa(\iota)}}-u))\phi\der x\biggr|\underset{\varepsilon_{\kappa(\iota)}\rightarrow0}{\longrightarrow}0.
\end{align}
Then, from \eqref{ineq.1.1} and \eqref{ineq.1.1.0}, we obtain 
\begin{equation}\label{ineq.1.1.1}
\int_{B_{ r}}(q'u-\dif^{'} u-\inted E(u))\phi\der x\leq \int_{B_{ r}}h\phi\der x.
\end{equation}
Since \eqref{ineq.1.1.1} holds for any non-negative function $\phi\in\hol^{\infty}_{\comp}(B_{ r})$ and any open ball $B_{r}\subset B_{R}(0)$, it follows that
\begin{equation}\label{solhgb1}
q'u-\dif^{'} u-\inted E(u)\leq h,\ \text{a.e. in}\ B_{R}(0).
\end{equation}
Now, since $\psi_{\varepsilon}(|\deri^{1} u^{\varepsilon_{\kappa(\iota)}}|^{2})$ is locally uniform bounded; see Lemma \ref{cotaphi}, independently of   $\varepsilon_{\kappa(\iota)}$, we have 
\begin{equation}\label{solhgb.1}
|\deri^{1} u|^{2}\leq1,\  \text{in}\ B_{R}(0). 
\end{equation}
Suppose that $|\deri^{1} u(x^{*})|^{2}<1$, for some $x^{*}\in B_{R}(0)$. Then, by the continuity of  $\deri^{1} u$, there exists a small open ball $B_{r}\subset B_{R}(0)$ such that $x^{*}\in B_{r}$ and    
$|\deri^{1} u|^{2}<1$ in $ B_{r}$. Since $\deri^{1} u^{\varepsilon_{\kappa(\iota)}}\underset{\varepsilon_{\kappa(\iota)}\rightarrow0}{\longrightarrow}\deri^{1} u$ uniformly in $B_{r}$, we obtain that there exists $\varepsilon^{\kappa(\iota_{0})}\in(0,1)$ such that for $\varepsilon_{\kappa(\iota)}\leq \varepsilon_{\kappa(\iota_{0})}$,  $|\deri^{1} u^{\varepsilon_{\kappa(\iota)}}|^{2}<1$ in $B_{r}$. Then, from (\ref{p.13.0}) and the definition of $\psi_{\varepsilon}$, it follows that for $\varepsilon_{\kappa(\iota)}\leq \varepsilon_{\kappa(\iota_{0})}$,  $q'u^{\varepsilon_{\kappa(\iota)}}-\dif^{'} u^{\varepsilon_{\kappa(\iota)}}-\inted E(u^{\varepsilon_{\kappa(\iota)}})= h$ in $B_{r}$. Then,
\begin{equation}\label{ineq.1.2}
\int_{B_{  r}}(q'u^{\varepsilon_{\kappa(\iota)}}-\dif^{'} u^{\varepsilon_{\kappa(\iota)}}-\inted E(u^{\varepsilon_{\kappa(\iota)}}))\phi\der x= \int_{B_{ r}}h\phi\der x,
\end{equation}
for any non-negative function $\phi$ in $\hol^{\infty}_{\comp}(B_{ r})$. From \eqref{ineq.1.1.0} and  \eqref{ineq.1.2}, we obtain 
\begin{equation*}
\int_{B_{ r}}(q'u-\dif^{'} u-\inted E(u))\phi\der x= \int_{B_{ r}}h\phi\der x,
\end{equation*}
for any non-negative function $\phi$ in $\hol^{\infty}_{\comp}(B_{  r})$. Therefore,
\begin{equation}\label{solhgb.2}
q'u-\dif' u-\inted E(u)= h,\ \text{a.e. in}\  B_{  r}.  
\end{equation}
Finally, since $u^{\varepsilon_{\kappa(\iota)}}=0$ on $\partial B_{R}(0)$ and $u^{\varepsilon_{\kappa(\iota)}}\underset{\varepsilon_{\kappa(\iota)}\rightarrow0}{\longrightarrow}u$ uniformly in $\overline{B_{R}(0)}$, we have  
\begin{equation}\label{solhgb2}
u=0,\ \text{on}\  \partial B_{R}(0).
\end{equation}
From (\ref{solhgb1}), (\ref{solhgb.1}), (\ref{solhgb.2}) and (\ref{solhgb2}), we conclude that $u$ is a solution to the HJB equation (\ref{p.1.0}) a.e. in $B_{R}(0)$.
\end{proof}

\begin{proof}[Proof of Theorem \ref{princ1.0.1}. Uniqueness]
To show the uniqueness of the HJB equation (\ref{p.1.0}), we shall  use the HJB equation (\ref{HJB2}) which is equivalent to it. Let  $d<p <\infty$. Suppose that there exist $u_{1},u_{2}\in\hol^{0,1}(\overline{B_{R}(0)})\cap\sob^{2,p}_{\loc}(B_{R}(0))$ two solutions to the HJB equation (\ref{HJB2}). Let $x^{*}\in\overline{B_{R}(0)}$ be the point where $u_{1}-u_{2}$ attains its maximum. If $x^{*}\in\partial B_{R}(0)$, it is easy to see 
\begin{equation}\label{HJB3}
(u_{1}-u_{2})(x)\leq(u_{1}-u_{2})(x^{*})=0,\ \text{in}\ B_{R}(0). 
\end{equation}
If $x^{*}\in B_{R}(0)$, we shall prove (\ref{HJB3}) by contradiction. Suppose $(u_{1}-u_{2})(x^{*})>0$. For $\rho>0$ small enough, the function $f\eqdef(1-\rho)u_{1}-u_{2}$, defined on $\overline{B_{R}(0)}$, is positive at some point of $B_{R}(0)$, with $f=0$ on $\partial B_{R}(0)$, and hence that $f(x^{*}_{1})>0$, where  $x^{*}_{1}\in B_{R}(0)$ is the point  where  $f$ attains its maximum. Besides, we have
\begin{equation*}
\begin{cases}
\deri^{1}f(x^{*}_{1})=0,\\
f(x^{*}_{1}+z)\leq f(x^{*}_{1}),\ \text{for all}\ x^{*}_{1}+z\in B_{R}(0). 
\end{cases}
\end{equation*}
Since $f(x^{*}_{1}+z)\leq f(x^{*}_{1})$ for all $x^{*}_{1}+z\in B_{R}(0)$, it follows that
\begin{align*}
0&\geq\inted^{'} E(f)(x^{*}_{1})\\
&=\int_{\R^{*}}(f(x^{*}_{1}+z)-f(x^{*}_{1}))\uno_{B_{R}(0)}(x^{*}_{1}+z)\nu(\der z)+\int_{\R^{*}}(E(f)(x^{*}_{1}+z)-f(x^{*}_{1}))\uno_{\mathcal{B}'}(x^{*}_{1}+z)\nu(\der z),
\end{align*}
with $\mathcal{B}'= B_{R+\frac{b}{2}}(0)\setminus\overline{B_{R}(0)}$. Since $\deri^{1}f(x^{*}_{1})=0$, $|\deri^{1}u_{1}(x^{*}_{1})|\leq1$ and $\rho>0$, we get that $|\deri^{1}u_{2}(x^{*}_{1})|=(1-\rho)|\deri^{1}u_{1}(x^{*}_{1})|<1$. This implies that  there exists $\mathcal{V}_{x^{*}_{1}}$ a neighbourhood of $x_{1}^{*}$ such that     
\begin{equation*}
\begin{cases}
qu_{2}(x)-\dif u_{2}(x)-\inted^{'}E(u_{2})(x)=h(x),\\
qu_{1}(x)-\dif u_{1}(x)-\inted^{'}E(u_{1})(x) \leq h(x),
\end{cases}
\text{for all}\ x\in\mathcal{V}_{x_{1}^{*}}.
\end{equation*}
Then, $qf-\dif f -\inted^{'}E(f)\leq-\rho h$  in $\mathcal{V}_{x_{1}^{*}}$, and hence,
\begin{equation*}
\dfrac{1}{2}\tr(\sigma\deri^{2}f)\geq q f-\inted^{'} E(f)-\langle\deri^{1}f,\gamma\rangle+\rho h,\ \text{in}\ \mathcal{V}_{x_{1}^{*}}.
\end{equation*}
Using Bony's maximum principle; see \cite{lions}, it yields
\begin{align*}
0&\geq\limess_{x\rightarrow x_{1}^{*}} \dfrac{1}{2}\tr(\sigma\deri^{2}f)(x)\geq qf(x_{1}^{*})-\inted^{'} E(f)(x_{1}^{*})+\rho h(x_{1}^{*}),
\end{align*}
which is a contradiction. The application of Bony's maximum principle is permitted here because $u_{1},u_{2}\in \sob^{2,p}_{\loc}(B_{R}(0))$ and $d< p<\infty$. Therefore, we have $u_{1}-u_{2}\leq0$ in $B_{R}(0)$. Taking $u_{2}-u_{1}$ and proceeding in a similar way as before, it follows that $u_{2}-u_{1}\leq 0$,  in $x\in B_{R}(0)$, and hence  we conclude that the solution $u$ to the HJB equation (\ref{p.1.0}) is unique.
\end{proof}

\renewcommand\thesection{A}
\setcounter{equation}{0}
\section*{Appendix. Proofs of some technical results}\label{techn1}

\begin{proof}[Proof of Lemma \ref{verif}]
Let $X=\{X_{t}:t\geq0\}$ be the state process as in (\ref{p3}), and we  assume that $u$   is a convex function in $\hol^{2}(\R^{d})$, such that it is a solution of the HJB equation (\ref{p1.0.4}). To prove (i) we consider an initial state $x\in\R^{d}$ and  a control process $(N,\xi)$. Using integration by parts in $\expo^{-qt}u(X_{t})$ \cite[Cor. 2, p. 68]{pro} and applying It\^o's formula to $u(X_{t})$  \cite[Thm. 33, p. 81]{pro}, it follows that
\begin{align}\label{expand}
  \expo^{-qt}u(X_{t})-u(x)&=\int_{0}^{t}\expo^{-qs}((\Gamma-q)u(X_{s})+h(X_{s})) \der s\notag\\
  &\hspace{-1cm}-\int_{0}^{t}\expo^{-qs}h(X_{s}) \der s+\int_{0}^{t}\expo^{-qs}\langle\deri^{1} u(X_{s-}),N_{s}\rangle\der\xi_{s}^{\comp}+M_{t}\notag\\
  &\hspace{-1cm}+\sum_{0<s\leq t}\expo^{-qs}(u(A_{s}+N_{s}\Delta \xi_{s})-u(A_{s}))\uno_{\{|\Delta\xi_{s}|\neq0,\,|\Delta Y_{s}|=0\}},
\end{align}
for all $t\geq0$, where $\xi^{\comp}$ is the contimuous part of $\xi$, and
\begin{align*}	
A_{t}&\eqdef X_{t-}+\Delta Y_{t},\\
  M_{t}&\eqdef\int_{0}^{t}\expo^{-qs}\langle\deri^{1} u(X_{s}),\der W_{s}\rangle\\
  &\quad+\int_{0}^{t}\int_{\R^{*}}\expo^{-qs}(u(X_{s-}+z)-u(X_{s-}))(\vartheta(\der s\times\der z)-\nu(\der z)\der s).
\end{align*}
Since the process $M=\{M_{t}:t\geq0\}$ is a local martingale and defining the stopping time $\tau_{B_{n}(0)}$ as $\tau_{B_{n}(0)}=\inf\{t>0 :X_{t}\notin B_{n}(0)\}$, for all $n\geq1$, the process $M^{\tau_{B_{n}}(0)}=\{M_{t\wedge \tau_{B_{n}(0)}}:t\geq0\}$ is a $\Pro_{x}$-martingale with $M_{0}=0$. Then, taking expected value in (\ref{expand}), it follows that
\begin{align}\label{dem9}
  u(x)&=\E_{x}\bigl(\expo^{-q(t\wedge\tau_{B_{n}(0)})}u(X_{t\wedge\tau_{B_{n}(0)} })\bigr)+\E_{x}\Biggl(\int_{0}^{t\wedge\tau_{B_{n}(0)}}\expo^{-qs}h(X_{s}) \der s\Biggr)\notag\\
  &+\E_{x}\Biggl(\int_{0}^{t\wedge\tau_{B_{n}(0)} }\expo^{-qs}((q-\Gamma)u(X_{s})-h(X_{s})) \der s\Biggr)\notag\\
  &-\E_{x}\biggr(\int_{0}^{t\wedge\tau_{B_{n}(0)}}\expo^{-qs}\langle\deri^{1} u(X_{s-}),N_{s}\rangle\der\xi_{s}^{\comp}\biggl)\notag\\
  &-\E_{x}\Biggl(\sum_{0<s\leq t\wedge\tau_{B_{n}(0)}}\expo^{-qs}(u(A_{s}+N_{s}\Delta \xi_{s})-u(A_{s}))\uno_{\{|\Delta\xi_{s}|\neq0,\,|\Delta Y_{s}|=0\}}\Biggr).
\end{align}
Given that $u$ is a convex solution to the HJB equation (\ref{p1.0.4}), we know that
\begin{equation*}
\begin{cases}
|\deri^{1} u(X_{t-})|^{2}-1\leq0,\\
(q-\Gamma)u(X_{t-})-h(X_{t-})\leq0,\\
u(A_{t}+N_{t}\Delta \xi(t))-u(A_{t})\geq \langle\deri^{1} u(A_{t}),N_{t}\rangle\Delta \xi(t).
\end{cases}
\end{equation*}
Then, 
\begin{equation*}
u(x)\leq\E_{x}\bigl(\expo^{-q(t\wedge\tau_{B_{n}(0)})}u(X_{t\wedge\tau_{B_{n}(0)}})\bigr)+\E_{x}(\int_{0}^{t\wedge\tau_{B_{n}(0)}}\expo^{-qs}(h(X_{s})\der s+\der\xi_{s})). 
\end{equation*}
Letting $n\rightarrow\infty$, it follows that  $\tau_{B_{n}(0)}\rightarrow\infty$ a.s. and hence
\begin{align}\label{demo9}
	u(x)\leq\E_{x}\bigl(\expo^{-qt}u(X_{t})\bigr)
	+\E_{x}\Biggl(\int_{0}^{t}\expo^{-qs}(h(X_{s})\der s+\der\xi_{s}) \Biggr).
\end{align}
Since $\lim_{t\rightarrow\infty}\E_{x}( \int_{0}^{t}\expo^{-qs}(h(X_{s})\der s+\der\xi_{s}))=\E_{x}( \int_{0}^{\infty}\expo^{-qs}(h(X_{s})\der s+\der\xi_{s}))$, we only need  to prove that 
\begin{equation}\label{demo9.2}
\varliminf_{t\rightarrow\infty}\E(\expo^{-qt}u(X_{t}))=0. 
\end{equation}
Assume that $\E_{x}\bigl( \int_{0}^{\infty}\expo^{-qt}h(X_{t})\der t\bigr)<\infty$. Otherwise (\ref{demo9}) is always true. This implies that $\varliminf_{t\rightarrow\infty}\E_{x}(\expo^{-qt}h(X_{t}))=0$. By (\ref{convh1}) and Taylor's Formula, we can observe that $\frac{c_{0}}{2}|y|^{2}\leq\int_{0}^{1}(1-\lambda)\langle\deri^{2}h(\lambda y)y,y\rangle\der\lambda=h(y)$. Then, using  that $u$ is a convex function and $||\deri^{1} u(y)||^{2}<1$, for all $y\in\R^{d}$, we see
\begin{align*}
	u(y)\leq u(0)+|\deri^{1} u(y)|\,|y|\leq u(0)+1+|y|^{2}\leq u(0)+1+\frac{2}{c_{0}}h(y),
\end{align*} 
for all  $y\in\R^{d}$. This implies that $ \varliminf_{t\rightarrow\infty}\E_{x}(\expo^{-qt}u(X_{t}))=0$. It follows that  $u(x)\leq V(x)$, for each $x\in\R^{d}$.
Finally, we shall show (ii). Let $x\in\R^{d}$ be an initial state and $(N^{*},\xi^{*})$ a control process such that $V_{(N^{*},\xi^{*})}(x)<\infty$, and the state process $X^{*}$ satisfies (\ref{demo10.1}). Applying similar arguments as in the previous proof of  $u\leq V$,  (\ref{dem9}) holds for $X^{*}$. From (\ref{demo10.1}), it is easily verified for $\tau^{*}_{B_{n}(0)}=\inf\{t>0 :X^{*}_{t}\notin\text{B}_{n}(0)\}$, with $n\geq1$, and $t\geq0$,  that
\begin{equation}\label{demo11}
\int_{0}^{t\wedge\tau^{*}_{B_{n}(0)}}\langle\deri^{1} u(X^{*}_{s-}),N_{s}\rangle\der\xi_{s}^{*\comp}=-\int_{0}^{t\wedge\tau^{*}_{B_{n}(0)}}\uno_{\{N^{*}_{s}=-\deri^{1} u(X^{*}_{s-})\}}\der\xi_{s}^{*\comp},
\end{equation}
and
\begin{equation}\label{demo12}
\sum_{0<s\leq t\wedge\tau^{*}_{B_{n}(0)}}\Delta\xi^{*}_{s}
=\sum_{0<s\leq t\wedge\tau^{*}_{B_{n}(0)}}(u(A_{s}+N_{s}\Delta \xi^{*}_{s})
-u(A_{s}))\uno_{\{||\Delta\xi^{*}_{s}||\neq0,\,||\Delta Y_{s}||=0\}}.
\end{equation}
Using (\ref{demo10.1}), (\ref{demo11}) and (\ref{demo12}) in (\ref{demo9}), it follows that
\begin{align}\label{demo13}
u(x)&=\E_{x}\bigl(\expo^{-q(t\wedge\tau^{*}_{B_{n}(0)})}u(X^{*}_{t\wedge\tau^{*}_{B_{n}(0)}})\bigr)+\E_{x}\Biggl(\int_{0}^{t\wedge\tau^{*}_{B_{n}(0)}}\expo^{-qs}(h(X^{*}_{s})\der s+\der\xi^{*}_{s}) \Biggr).
\end{align}
Letting $n\rightarrow\infty$ in (\ref{demo13}) and by  (\ref{demo9.2}), we get $u(x)=V_{(N^{*},\xi^{*})}(x)=V(x)$. This means that $(N^{*},\xi^{*})$ is the optimal control.
\end{proof}

\begin{proof}[Proof of Lemma \ref{convexu1.0}]
Let $X=\{X_{t}:t\geq0\}$ be the state process as in (\ref{conv3}), with $\varrho$  a control process and  $x\in B_{R}(0)$ fix an initial state. Integration by parts and It\^o's formula imply (see \cite[Cor. 2 and Thm. 33, pp. 68 and 81, respectively]{pro}) that
\begin{align}\label{expand1.0.0}
&u^{\varepsilon}(x)-\expo^{-q(t\wedge\tau_{B_{R}(0)})} u^{\varepsilon}(X_{t\wedge\tau_{B_{R}(0)}})\notag\\
  &=\int_{0}^{t\wedge\tau_{B_{R}(0)}}\expo^{-qs} (qu^{\varepsilon}(X_{s}))-\Gamma u^{\varepsilon}(X_{s})+\langle\deri^{1} u^{\varepsilon}(X_{s}),\dot{\varrho}_{s})\rangle\,\der s-M_{t\wedge\tau_{B_{R}(0)}},
\end{align}
for all $t\geq0$, with
\begin{align*}	
 M_{t}&\eqdef\int_{0}^{t}\expo^{-qs}\langle\deri^{1} u^{\varepsilon}(X_{s}),\der W_{s}\rangle\\
 &\quad+\int_{0}^{t}\int_{\R^{*}}\expo^{-qs}(E(u^{\varepsilon})(X_{s-}+z)-u^{\varepsilon}(X_{s-}))(\vartheta(\der s\times\der z)-\nu(\der z)\der s).\notag
\end{align*}
The process $M=\{M_{t}:t\geq0\}$ is a local martingale with $M_{0}=0$. Then, the process $M^{\tau_{B_{R}(0)}}\eqdef\{M_{t\wedge\tau_{B_{R}(0)}}:t\geq0\}$, is a  $\Pro_{x}$-martingale with $M_{0}=0$. Then, taking the expected value in (\ref{expand1.0.0}), it follows that
\begin{multline}\label{expand1.0.1}
u^{\varepsilon}(x)-\E_{x}(\expo^{-q(t\wedge\tau_{B_{R}(0)})} u^{\varepsilon}(X_{t\wedge\tau_{B_{R}(0)}}))\\
  =\E_{x}\biggl(\int_{0}^{t\wedge\tau_{B_{R}(0)}}\expo^{-qs} (qu^{\varepsilon}(X_{s})-\Gamma u^{\varepsilon}(X_{s})+\langle\deri^{1} u^{\varepsilon}(X_{s}),\dot{\varrho}_{s}\rangle)\der s\biggr).
\end{multline}
From (\ref{p13.0.1}), we get that
\begin{equation}\label{expand1.0.2}
\E_{x}(\expo^{-q(t\wedge\tau_{B_{R}(0)})} u^{\varepsilon}(X_{t\wedge\tau_{B_{R}(0)}}))
\geq u^{\varepsilon}(x)-\E_{x}\biggl(\int_{0}^{t\wedge\tau_{B_{R}(0)}}\expo^{-qs} (h(X_{s})+l_{\varepsilon}(\dot{\varrho}_{s}))\der s\biggr).
\end{equation}
Note that $\tau_{B_{R}(0)}<\infty$ or $\tau_{B_{R}(0)}=\infty$. On the event $\{\tau_{B_{R}(0)}<\infty\}$, we let $t\rightarrow\infty$ in (\ref{expand1.0.2}). Then,
\begin{equation}\label{expand1.0.3}
u^{\varepsilon}(x)\leq \E_{x}\biggl(\biggl(\int_{0}^{\tau_{B_{R}(0)}}\expo^{-qs} (h(X_{s})+l_{\varepsilon}(\dot{\varrho}_{s}))\der s\biggr)\uno_{\{\tau_{B_{R}(0)}<\infty\}}\biggr). 
\end{equation}
Now, on $\{\tau_{B_{R}(0)}=\infty\}$, we observe that $\expo^{-q(t\wedge\tau_{B_{R}(0)})}=0$ and $X_{t}\in B_{R}(0)$, for all $t>0$. Since $u^{\varepsilon}$ is a bounded continuous function, we have that 
\begin{equation*}
\E_{x}(\expo^{-q(t\wedge\tau_{B_{R}(0)})} u^{\varepsilon}(X_{t\wedge\tau_{B_{R}(0)}})\uno_{\{\tau_{B_{R}(0)}=\infty\}})=0. 
\end{equation*}
Then, by (\ref{expand1.0.2}), it yields that  
\begin{equation}\label{expand1.0.4}
u^{\varepsilon}(x)\leq \E_{x}\biggl(\biggl(\int_{0}^{\infty}\expo^{-qs} (h(X_{s})+l_{\varepsilon}(\dot{\varrho}_{s}))\der s\biggr)\uno_{\{\tau_{B_{R}(0)}=\infty\}}\biggr). 
\end{equation}
From (\ref{expand1.0.3}) and (\ref{expand1.0.4}), we get $u^{\varepsilon}\leq V^{\varepsilon}$. Since $\psi'_{\varepsilon}(||\deri^{1} u^{\varepsilon}(x)||^{2})\deri^{1} u^{\varepsilon}(x)$ is a Lipschitz continuous function  \cite[Thm. 6, p. 255]{pro}, the process $\widetilde{X}=\{\widetilde{X}:0\leq t\leq\tau_{B_{R}(0)}\}$ is solution to 
\begin{equation}\label{convex2}
\widetilde{X}_{t}=x+Y_{t}-\int_{0}^{t\wedge\tau_{B_{R}(0)}}2\psi'_{\varepsilon}(|\deri^{1} u^{\varepsilon}(\widetilde{X}_{s})|^{2})\deri^{1} u^{\varepsilon}(\widetilde{X}_{s})\der s, 
\end{equation}
for all $0\leq t\leq\tau_{B_{R}(0)}$. Then, its corresponding control process is given by 
\begin{equation}\label{convex4}
\dot{\varrho}^{R}_{t}=2\psi'_{\varepsilon}(|\deri^{1} u^{\varepsilon}(\widetilde{X}_{s})|^{2})\deri^{1} u^{\varepsilon}(\widetilde{X}_{s}),\ \text{for all}\ 0\leq t\leq\tau_{B_{R}(0)}.
\end{equation}
The process $\widetilde{X}$ satisfies (\ref{expand1.0.1}) and by (\ref{conv2.0.0}), from a similar argument  it follows that
\begin{equation*}
\E_{x}(\expo^{-q(t\wedge\tau_{B_{R}(0)})}u^{\varepsilon}(\widetilde{X}_{t\wedge\tau_{B_{R}(0)}}))= u^{\varepsilon}(x)-\E_{x}\biggl(\int_{0}^{t\wedge\tau_{B_{R}(0)}}\expo^{-qs}(h(\widetilde{X}_{s})+l_{\varepsilon}(\dot{\varrho}^{R}_{s}))\der s\biggr),
\end{equation*}
Proceeding in a similar way that (\ref{expand1.0.3}) and (\ref{expand1.0.4}), we have that $u^{\varepsilon}(x)= V^{\varepsilon,R}(x)$. We finish the proof.
\end{proof}

\begin{proof}[Proof of Lemma \ref{lemfrontera1.0}]
Let $\varphi$ be as in (\ref{aux.1.0}). Note that $\varphi\in\hol^{2,\alpha}(\overline{B_{R}(0)})$, since $u^{\varepsilon}\in\hol^{3,\alpha}(\overline{B_{R}(0)})$. The first and second derivatives of $\varphi$ are given by 
\begin{equation}\label{partu1.1}
\begin{cases}
\partial_{i}\varphi=2 \sum_{k}\partial_{k}u^{\varepsilon}\partial_{ki}^{2}u^{\varepsilon}-M\partial_{i}u^{\varepsilon},\\
\partial^{2}_{ij}\varphi=2 \sum_{k}(\partial^{2}_{kj}u^{\varepsilon}\partial^{2}_{ki}u^{\varepsilon}+\partial_{k}u^{\varepsilon}\partial^{3}_{kij}u^{\varepsilon})-M\partial^{2}_{ij}u^{\varepsilon}.
\end{cases}
\end{equation}
On the other hand, using (\ref{p.13.0}) and (\ref{intop4.1.0}), we get in $ B_{R}(0)$,
\begin{equation}\label{partu1.0}
\begin{cases}
  -\frac{M}{2}  \sum_{ij}\sigma_{ij}\partial_{ij}^{2} u^{\varepsilon}&=M(h-q'u^{\varepsilon}-\psi_{\varepsilon}(g)+\langle\deri^{1}u^{\varepsilon},\widetilde{\gamma}\rangle+\inted E(u^{\varepsilon})),\\
  \frac{1}{2} \sum_{kij}\sigma_{ij}\partial_{k}u^{\varepsilon}\partial^{3}_{kij}u^{\varepsilon}&=q'|\deri^{1}u^{\varepsilon}|^{2} -\langle\deri^{1}u^{\varepsilon},\deri^{1}h\rangle+\psi'_{\varepsilon}(g)\langle\deri^{1} u^{\varepsilon},\deri^{1} g\rangle\\
  &\quad-\langle\deri^{2} u^{\varepsilon}\deri^{1}u^{\varepsilon},\widetilde{\gamma}\rangle- \sum_{i}\partial_{i}u^{\varepsilon} \inte E(\partial_{i}u^{\varepsilon}),
\end{cases}
\end{equation}
where  the first and second derivatives of $g$ are given in (\ref{intop4.1.3.0}). Then, from \eqref{partu1.1}--\eqref{partu1.0}, we see that
\begin{align}\label{partu1}
\frac{1}{2}\sum_{ij}\sigma_{ij}\partial_{ij}^{2}\varphi&=\sum_{kij}\sigma_{ij}\partial^{2}_{kj}
u^{\varepsilon}\partial^{2}_{ki}u^{\varepsilon}-2\langle\deri^{1}u^{\varepsilon},\deri^{1}h\rangle +Mh+q'(2|\deri^{1}u^{\varepsilon}|^{2}-Mu^{\varepsilon})\notag\\
&\quad+2\psi'_{\varepsilon}(g)\langle\deri^{1} u^{\varepsilon},\deri^{1} g\rangle-M\psi_{\varepsilon}(g)+M \inte E(u^{\varepsilon})\notag\\
&\quad-2 \sum_{i}\partial_{i}u^{\varepsilon}\inte E(\partial_{i}u^{\varepsilon})-2\langle\deri^{2} u^{\varepsilon}\deri^{1}u^{\varepsilon},\widetilde{\gamma}\rangle+M\langle\deri^{1}u^{\varepsilon},\widetilde{\gamma}\rangle.
\end{align}
Lemma \ref{inde} implies 
\begin{equation}\label{partu12}
q'(2|\deri^{1}u^{\varepsilon}|^{2}-Mu^{\varepsilon})\geq-Mq'K_{5}.
\end{equation}
The constant $K_{5}$ is as in Lemma \ref{inde}. By (H\ref{h1}) and (H\ref{h3}), it follows
\begin{equation}\label{partu2}
\sum_{kij}\sigma_{ij}\partial^{2}_{kj}u^{\varepsilon}\partial^{2}_{ki}u^{\varepsilon}-2\langle\deri^{1}u^{\varepsilon},\deri^{1}h\rangle+Mh\geq-2C_{0}|\deri^{1}u^{\varepsilon}|,
\end{equation}
The constants $\theta$ and $C_{0}$ are given in (H\ref{h1}) and (H\ref{h3}), respectively. Since  $\psi_{\varepsilon}(g)\leq\psi'_{\varepsilon}(g)g$ and $\partial_{i}\varphi=2\sum_{k}\partial_{k}u^{\varepsilon}\partial_{ki}^{2}u^{\varepsilon}-M\partial_{i}u^{\varepsilon}$ for all $i\in\{1,\dots,d\}$, we have that 
\begin{equation}\label{partu13}
\begin{cases}
\langle\deri^{1}\varphi,\widetilde{\gamma}\rangle=2\langle\deri^{2} u^{\varepsilon}\deri^{1}u^{\varepsilon},\widetilde{\gamma}\rangle-M\langle\deri^{1}u^{\varepsilon},\widetilde{\gamma}\rangle,\\
2\psi'_{\varepsilon}(g)\langle\deri^{1} u^{\varepsilon},\deri^{1} g\rangle-M\psi_{\varepsilon}(g)\geq\psi'_{\varepsilon}(g)(2\langle\deri^{1}\varphi,\deri^{1}u^{\varepsilon}\rangle+M|\deri^{1}u^{\varepsilon}|^{2}).
\end{cases}
\end{equation}
Since $\int_{|x+z|\leq R} u^{\varepsilon}(x+z)\nu(\der z)\geq0$ and from Lemmas \ref{cotaintaf1}-\ref{intcotadif1.1}, it follows that
\begin{multline}\label{partu3}
M \inte E(u^{\varepsilon})-2 \sum_{i}\partial_{i}u^{\varepsilon} \inte E(\partial_{i}u^{\varepsilon})\\
\geq -2MA_{0}K_{5}\nu(\mathcal{B}')-2dC_{1}K_{5}\nu_{0}|\deri^{1}u^{\varepsilon}|-2dM(\nu_{0}+dC_{1}\nu(\mathcal{B}'))|\deri^{1}u^{\varepsilon}|.
\end{multline}
The constants $\nu_{0}$, $A_{0}$ and $C_{1}$ are  as in (H\ref{h2}) and Lemmas \ref{cotaintaf1}--\ref{intcotadif1.1}, respectively, and $\mathcal{B}'= B_{R+\frac{b}{2}}(0)\setminus\overline{B_{R}(0)}$. Defining 
\begin{align*}
K_{7}&\eqdef K_{5}(q'+2A_{0}\nu(\mathcal{B}')),\\ 
K_{8}&\eqdef2(C_{0}+dC_{1}K_{5}\nu(\mathcal{B}')),\\
K_{9}&\eqdef 2d(\nu_{0}+dC_{1}\nu(\mathcal{B}')), 
\end{align*}
which are independent of $\varepsilon$, and applying (\ref{partu12})--(\ref{partu3}) in (\ref{partu1}), it yields (\ref{partu4}). 
\end{proof}

\begin{proof}[Proof of Lemma \ref{cotaphi.1.0}]
Let $B_{r}\subset B_{R}(0)$ be an open ball. For each $\xi\in\hol_{\comp }^{\infty}(B_{r})$ satisfying $0\leq\xi\leq1$, define $\phi$ as in (\ref{phi.1.0}). The  first and second derivatives of $\phi$ in $B_{r}$ are given by
\begin{equation}\label{psi1}
\begin{cases}
  \partial_{i} \phi&=\psi_{\varepsilon}(g)\partial_{i} \xi+\xi\psi'_{\varepsilon}(g)\partial_{i}g,\\
  \partial^{2}_{ji} \phi&=\psi_{\varepsilon}(g)\partial^{2}_{ji} \xi+\psi'_{\varepsilon}(g)\partial_{i} \xi\partial_{j}g+\xi\psi''_{\varepsilon}(g)\partial_{j}g\partial_{i}g\\
  &\quad+\psi'_{\varepsilon}(g)\partial_{j}\xi\partial_{i}g+\xi\psi'_{\varepsilon}(g)\partial^{2}_{ji} g,
 \end{cases}
\end{equation}
where  the first and second derivatives of $g$ are given in (\ref{intop4.1.3.0}). Then, by (\ref{partu1.0}) and \eqref{psi1}, we get
\begin{align}\label{psi1.2}
\frac{1}{2}\sum_{ji}\sigma_{ji}\partial^{2}_{ji}\phi&=\frac{\psi_{\varepsilon}(g)}{2}\sum_{ji}\sigma_{ji}\partial^{2}_{ji} \xi+\frac{\xi\psi''_{\varepsilon}(g)}{2}\sum_{ji}\sigma_{ji}\partial_{j}g\partial_{i}g+\psi'_{\varepsilon}(g)\biggl(\sum_{ji}\sigma_{ji}\partial_{i} \xi\partial_{j}g\notag\\
&\quad+\xi\sum_{jik}\sigma_{ji}\partial^{2}_{kj}u^{\varepsilon}\partial^{2}_{ki}u^{\varepsilon}+2\xi\biggr(q'|\deri^{1}u^{\varepsilon}|^{2} -\langle\deri^{1}u^{\varepsilon},\deri^{1}h\rangle\notag\\
&\quad+\psi'_{\varepsilon}(g)\langle\deri^{1} u^{\varepsilon},\deri^{1} g\rangle-\langle\deri^{2} u^{\varepsilon}\deri^{1}u^{\varepsilon},\widetilde{\gamma}\rangle-\displaystyle\sum_{i}\partial_{i}u^{\varepsilon}\inte E(\partial_{i}u^{\varepsilon})\biggl)\biggr).
\end{align}
On the other hand, applying Hypothesis (H\ref{h1}) and Lemmas \ref{cotaintaf1}, \ref{inde} and \ref{lemafrontera1},  in (\ref{p.13.0}), it yields
\begin{equation}\label{cotgrad1}
\psi_{\varepsilon}(g)\leq \frac{d^{2}|\sigma|\,|\deri^{2}u^{\varepsilon}|}{2}+C_{0}+K_{10}|\widetilde{\gamma}|+K_{5}(\nu_{0}+2A_{0}\nu(\mathcal{B}')).
\end{equation}
From Hypothesis (H\ref{h3}), \eqref{cotgrad1}, $\xi\geq0$ and $\psi''_{\varepsilon}\geq0$,  we have 
\begin{multline}\label{psi2}
\frac{\psi_{\varepsilon}(g)}{2}\tr(\sigma \deri^{2}\xi)+\frac{\xi\psi''_{\varepsilon}(g)}{2}\langle \sigma\deri^{1}g,\deri^{1}g\rangle\\
\geq-\frac{d^{2}|\deri^{2}\xi|\,|\sigma|}{2}\biggl(\frac{d^{2}|\sigma|\,|\deri^{2}u^{\varepsilon}|}{2}+C_{0}+K_{10}|\widetilde{\gamma}|+K_{5}(\nu_{0}+2A_{0}\nu(\mathcal{B}'))\biggr).
\end{multline}
Using Hypotheses (H\ref{h1}), (H\ref{h3}) and Lemma \ref{lemafrontera1}, it implies
\begin{equation}\label{psi3}
\begin{cases}
q'|\deri^{1}u^{\varepsilon}|^{2} -\langle\deri^{1}u^{\varepsilon},\deri^{1}h\rangle\geq -C_{0}K_{10},\\
\sum_{ji}\sigma_{ji}\partial_{i} \xi\partial_{j}g+\xi\sum_{jik}\sigma_{ji}\partial^{2}_{kj}u^{\varepsilon}\partial^{2}_{ki}u^{\varepsilon}\\
\hspace{3.5cm}\geq-2d^{3}K_{10}|\sigma|\,|\deri^{1}\xi|\,|\deri^{2}u^{\varepsilon}|+\theta\xi|\deri^{2}u^{\varepsilon}|^{2}.
\end{cases}
\end{equation}
From \eqref{psi1}, (\ref{cotgrad1}) and Lemma \ref{lemafrontera1}, it yields
\begin{multline}\label{psi5}
\xi\psi'_{\varepsilon}(g)\langle\deri^{1}g,\deri^{1}u^{\varepsilon}\rangle\geq\langle\deri^{1}\phi,\deri^{1}u^{\varepsilon}\rangle\\
-K_{10}|\deri^{1}\xi|\biggl(\frac{d^{2}|\sigma|\,|\deri^{2}u^{\varepsilon}|}{2}+C_{0}+K_{10}|\widetilde{\gamma}|+K_{5}(\nu_{0}+2A_{0}\nu(\mathcal{B}'))\biggr).
\end{multline}
Finally, Lemmas \ref{intcotadif1.1}, \ref{inde} and \ref{lemafrontera1} imply
\begin{equation}\label{psi6}
-\langle\deri^{2}u^{\varepsilon}\deri^{1}u^{\varepsilon},\widetilde{\gamma}\rangle-\sum_{i}\partial_{i}u^{\varepsilon}\inte E(\partial_{i}u^{\varepsilon})\\
\geq-K_{10}|\widetilde{\gamma}|\,|\deri^{2}u^{\varepsilon}|-dK_{10}(K_{10}\nu_{0}+C_{1}\nu(\mathcal{B}')(K_{5}+dK_{10})).
\end{equation}
Defining the constants
\begin{align*}
K_{11}&\eqdef\frac{d^{2}|\sigma|\,|\deri^{2}\xi|}{2},\ K_{12}\eqdef\frac{d^{2}|\sigma|}{2},\\
K_{13}&\eqdef C_{0}+|\widetilde{\gamma}|K_{10}+K_{5}(\nu_{0}+2A_{0}\nu(\mathcal{B}')),\\
K_{14}&\eqdef 2K_{10}(|\deri^{1}\xi|(d^{3}|\sigma|+K_{12})+|\widetilde{\gamma}|),\\
K_{15}&\eqdef 2K_{10}(K_{13}|\deri^{1}\xi|+C_{0}+d(K_{10}\nu_{0}+C_{1}\nu(\mathcal{B}')(K_{5}+dK_{10}))),
\end{align*}
which are independent of $\varepsilon$, and applying (\ref{psi2})--(\ref{psi6}) in (\ref{psi1.2}), we conclude \eqref{psic1}.
\end{proof}

\begin{proof}[Proof of Lemma \ref{converu1}]
Let $d< p<\infty$, $B_{r}\subset B_{R}(0)$ an open ball and $\beta\in(0,1)$ fixed. Since the sequence  $\{u^{\varepsilon}\}_{\varepsilon\in(0,1)}$ satisfies (\ref{sob1}) and \eqref{hol.1.1.0.1}, by Arzel\`a-Ascoli Theorem; see \cite[Thm. 7.25, p. 158]{rudin}, it follows there exist a decreasing subsequence $\{\varepsilon_{\kappa_{1}(\iota)}\}_{\kappa_{1}(\iota)\geq1}$, with $\varepsilon_{\kappa_{1}(\iota)}\underset{\iota\rightarrow\infty}{\longrightarrow}0$, and  $u\in\hol^{0,1}(\overline{B_{R}(0)})\cap\hol^{1}(B_{R}(0))$ such that 
\begin{equation}\label{limit.1.0}
\begin{cases}
u^{\varepsilon_{\kappa_{1}(\iota)}}\underset{\varepsilon_{\kappa_{1}(\iota)}\rightarrow0}{\longrightarrow}u & \text{in}\ \hol^{0}(\overline{B_{R}(0)}),\\
\partial_{i}u^{\varepsilon_{\kappa_{1}(\iota)}}
\underset{\varepsilon_{\kappa_{1}(\iota)}\rightarrow0}{\longrightarrow}\partial_{i}u, &\text{in}\ \hol^{0}_{\loc}(B_{R}(0)). 
\end{cases}
\end{equation}
Now, observing that the subsequence  $\{u^{\varepsilon_{\kappa_{1}(\iota)}}\}_{\kappa_{1}(\iota)\geq1}$ satisfies \eqref{sob2}, from \eqref{limit.1.0} and the reflexivity of $\Lp^{p}(B_{r})$; see \cite[Thm. 2.46, p. 49]{adams}, we have that there exists a subsequence $\{u^{\varepsilon_{\kappa_{2}(\iota)}}\}_{\kappa_{2}(\iota)\geq1}$ of $\{u^{\varepsilon_{\kappa_{1}(\iota)}}\}_{\kappa_{1}(\iota)\geq1}$ such that 
\begin{equation*}
\partial^{2}_{ij}u^{\varepsilon_{\kappa_{2}(\iota)}}\underset{\varepsilon_{\kappa_{2}(\iota)}\rightarrow0}{\longrightarrow}\partial^{2}_{ij}u,\ \text{weakly in}\ \Lp^{p}(B_{\beta r}), 
\end{equation*}
where $\partial^{2}_{ij}u$ represents the second weakly derivative of $u$, with $i,j\in\{1,\dots,d\}$. We shall show below (\ref{conver1.0.3}). Note that for each $x\in B_{R}(0)$, by Proposition \ref{cotEu1}, we have 
\begin{equation*}
|\inted E(u^{\varepsilon_{\kappa_{2}(\iota)}})(x)-\inted E (u)(x)|\leq2A_{0}\nu_{0}||u^{\varepsilon_{\kappa_{2}(\iota)}}-u||_{\hol^{0}(\overline{B_{R}(0)})}\underset{\varepsilon_{\kappa_{2}(\iota)}\rightarrow0}{\longrightarrow}0, 
\end{equation*}
and hence $\inted E(u^{\varepsilon_{\kappa_{2}(\iota)}})\underset{\varepsilon_{\kappa_{2}(\iota)}\rightarrow0}{\longrightarrow}\inted E (u)$, uniformly in $B_{R}(0)$. We conclude that there exist a decreasing subsequence $\{\varepsilon_{\kappa(\iota)}\}_{\iota\geq1}$, with $\varepsilon_{\kappa(\iota)}\underset{\iota\rightarrow\infty}{\longrightarrow}0$, and  $u\in\hol^{0,1}(\overline{B_{R}(0)})\cap\sob^{2,p}(B_{\beta r})$ satisfying  (\ref{conver.1.0}) and (\ref{conver1.0.3}).
  \end{proof}
\appendix
\section*{Acknowledgments}
The results in this paper are part of the Ph.D. thesis of the author H. A. 
Moreno-Franco \cite{moreno}, under the supervision  of Dr. Daniel
Hern\'andez-Hern\'andez and Dr. V\'ictor Rivero. The author would like to  
thank: CONACyT and CIMAT for the Ph.D. fellowship and facilities provided; 
National Research University Higher School of Economics for the financial 
support in finishing this project; his doctoral advisors of thesis 
Dr. Daniel Hern\'andez-Hern\'andez and Dr. V\'ictor Rivero, 
for their guidance on this work; and finally, his readers of thesis Dr.
Jose Luis Menaldi, Dr. Renato Iturriaga, Dr. Hector Sanchez and Dr. Juan 
Carlos Pardo, for their advice and suggestions.



\end{document}